\documentclass[11pt, a4paper]{article}

\usepackage{a4wide}

\usepackage{amssymb,amsmath,amsthm}
\usepackage{graphicx,enumerate}
\usepackage{ragged2e}
\usepackage{xcolor}
\usepackage[utf8]{inputenc}
\usepackage[T1]{fontenc}
\usepackage[english]{babel}
\usepackage{subcaption}
\usepackage{amssymb, amsmath, latexsym, amsthm, enumerate, amsfonts}
\usepackage[labelsep=period]{caption}
\usepackage{thmtools}
\usepackage{thm-restate}

\usepackage[font=small]{caption}
\usepackage{mwe}

\usepackage[pdftex, pdfstartview=FitH]{hyperref}

\usepackage{enumitem}

\usepackage{tikz}
\usetikzlibrary{decorations.pathreplacing,shapes}
\usetikzlibrary {positioning} 

\tikzstyle{v}=[circle,inner sep=1, minimum size =1 pt, line width = 1pt, draw=black, fill=black]

\numberwithin{equation}{section}
\newtheorem{theorem}{Theorem}[section]
 \newtheorem{thm}{Theorem}[section]
 \newtheorem{cor}[thm]{Corollary}

 \newtheorem{lem}[thm]{Lemma}
 
 \newtheorem{obs}[thm]{Observation}
 
 \theoremstyle{definition}
 \newtheorem{defn}[thm]{Definition}
 \theoremstyle{remark}

\newcommand{\dmg}{\mathop{}\!\mathrm{dmg}}
\newcommand{\KG}{\mathop{}\!\mathrm{KG}}
\newcommand{\dist}{\mathop{}\!\mathrm{d}}
\newcommand{\diam}{\mathop{}\!\mathrm{diam}}
\newcommand{\cart}{\, \Box \,}
\newcommand{\size}[1]{\left \vert #1 \right \vert}

\newcommand{\eps}{\varepsilon}

\newcommand{\Ex}{\mathbb{E}}
\newcommand{\Prob}{\mathbb{P}}

\newcommand{\cH}{{\mathcal H}}

\title{On the damage number of graphs}
\author{Valentin Gledel\,\thanks{Université Savoie Mont Blanc, CNRS UMR5127, LAMA, Chambéry, F-73000, France, e-mail: \href{mailto:valentin.gledel@univ-smb.fr}{valentin.gledel@univ-smb.fr}} \and William B. Kinnersley\,\thanks{Department of Mathematics and Applied Mathematical Sciences, University of Rhode Island, Kingston, Rhode Island, USA, e-mail: \href{mailto:billk@uri.edu}{billk@uri.edu}} \and Bal\'azs Patk\'os\,\thanks{HUN-REN Alfr\'ed R\'enyi Institute of Mathematics, Budapest, 1053, Re\'altanoda utca 13-15 and Department of Computer Science and Information Theory, Budapest University of
Technology and Economics, Budapest, Hungary, e-mail: \href{mailto:patkos@renyi.hu}{patkos@renyi.hu} } \and Milo\v s Stojakovi\' c\,\thanks{Department of Mathematics and Informatics, Faculty of Sciences, University of Novi Sad, 21000 Novi Sad, Serbia, e-mail: \href{mailto:milosst@dmi.uns.ac.rs}{milosst@dmi.uns.ac.rs}. Partly supported by the Science Fund of the Republic of Serbia, Grant \#7462: Graphs in Space and Time: Graph Embeddings for Machine Learning in Complex Dynamical Systems (TIGRA), and partly supported by the Ministry of Science, Technological Development and Innovation of the Republic of Serbia (grants 451-03-33/2026-03/200125 \& 451-03-34/2026-03/200125).}
}

\date{}

\begin{document}

\maketitle

\begin{abstract}
    We study a variant of Cops and Robbers in which the robber attempts to visit as many vertices of the graph as possible without being captured, while the cop aims to keep the robber confined to a small set of vertices. The \textit{damage number} of a graph $G$, introduced by Cox and Sanaei in 2019, is the maximum number of vertices the robber can visit in a game of Cops and Robbers on $G$. 
    
    In this paper, we determine damage numbers for several classes of graphs, including hypercubes, Hamming graphs, Johnson graphs, incidence graphs of projective planes, and Erd\H{o}s-Rényi random graph $G(n,p)$, for $p \gg \log^{2/5}(n) / n^{1/5}$.  We also show that the problem of determining the damage number of a graph is {\sf PSPACE}-complete.
\end{abstract}

\section{Introduction}

The game of Cops and Robbers is a pursuit-evasion game played on a graph, introduced independently by Quilliot~\cite{quilliot1978jeux} and by Nowakowski and Winkler~\cite{nowakowski1983vertex}. In the game, a set of cops and a robber occupy vertices of a graph $G$ and move alternately along edges, with the cops winning if one of them eventually lands on the robber’s vertex. The minimum number of cops required to guarantee capture is the cop number of the graph, a widely studied graph parameter connected to structural and algorithmic properties of graphs. A book by Bonato and Nowakowski~\cite{bonato2011game} covers this general topic in great detail. 

A recent variant, introduced by Cox and Sanaei~\cite{CS}, is the damage version of the game. Here the robber leaves permanent damage on every vertex it visits, and the objective of the cop (in this paper we consider games with one cop) is to minimize the total amount of damage incurred before capture. The corresponding parameter, called the damage number and denoted $\dmg(G)$, is the maximum number of vertices that robber can guarantee to  damage no matter the strategy of the cop. 

We note that the damage variant of Cops and Robbers was also studied with multiple cops by Carlson, Halloran and Reinhart~\cite{carlson2021damage}. Unlike the original game, in the damage variant it makes sense also to introduce multiple robbers, which was done by Carlson et al.~\cite{carlson2022multi}. These results were extended by Stojaković and Wolf~\cite{stojakovic2022multirobber}, where the authors also considered the game of two cops and two robbers. Recently, these and related problems were also investigated by Gagarova~\cite{gagarova2025exploring}.

Coming back to our setting with one cop and one robber, there has been research on the damage number of Cartesian product of graphs by Huggan, Messinger and Porter in~\cite{HMP, cor-huggan2024damage}. We start our investigations by considering the Cartesian product of cliques of arbitrary size, showing the following.

\begin{theorem}\label{thm:hamming}
If $G = K_{n_1} \cart K_{n_2} \cart \dots \cart K_{n_d}$, where $2 \le n_1 \le n_2 \le \dots \le n_d$, then 
\[\dmg(G) \le \begin{cases}
\displaystyle\frac{\size{V(G)}}{4} + \sum_{i=2}^{d} \prod_{j=3}^i n_j, \,\, &\text{if }n_1 = n_2 = 2\text{ and $n_d = 3$},\\[4ex]
\displaystyle\frac{\size{V(G)}}{n_d} + \sum_{i=0}^{d-2} \prod_{j=1}^i n_j, \,\, &\text{otherwise,}\end{cases}\]
and
\[\dmg(G) \ge \begin{cases}
\displaystyle\frac{\size{V(G)}}{4}, \,\, &\text{if }n_1 = n_2 = 2,\\[3ex]
\displaystyle\frac{\size{V(G)}}{n_d}, \,\, &\text{otherwise.} \end{cases}\]
\end{theorem}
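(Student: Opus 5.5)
We treat the lower bound and the upper bound separately. Write vertices of $G$ as vectors $x=(x_1,\dots,x_d)$ with $x_i\in[n_i]$; two vertices are adjacent exactly when they differ in one coordinate. The convention, as in Cox and Sanaei, is that the cop places first, then the robber, and damage counts vertices the robber occupies without being captured immediately.

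\textbf{Lower bound (robber strategy).} Assume first that $n_d\ge 3$. The robber never enters the closed neighbourhood of the cop. He plays on the ``layers'' determined by the last coordinate. Fix a partition of $V(G)$ into $\size{V(G)}/n_d$ classes, namely the lines in direction $d$; each class is a copy of $K_{n_d}$.

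The aim is to show that the robber can visit at least one vertex from each line while staying safe. Let the cop be at $c$. A vertex $v$ is unsafe iff $d(v,c)\le 1$. For each line $L$, the vertices of $L$ at distance $\le 1$ from $c$ are:
\begin{itemize}
\item all of $L$, if $c\in L$;
\item at most one vertex, otherwise.
\end{itemize}
Since $n_d\ge 3$, every line except the cop's own contains at least two safe vertices.

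The robber therefore walks a Gray-code-like ordering of the first $d-1$ coordinates. At each step he changes one of the first $d-1$ coordinates, then possibly adjusts the $d$-th coordinate to dodge. We need to verify that from a safe vertex there is always a safe move into a new line. This requires a short case check on the cop's position relative to the target line. When $n_1=\dots$ are mostly 2, the robber cannot both move and dodge in one step. The fix is to spend one move changing a coordinate and use the $d$-th coordinate flexibility at the next move; this may cost a factor of two in the count, so lines should be counted carefully.

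If $n_1=n_2=2$, the robber instead uses the $4$-cycles $K_2\cart K_2$ in the first two coordinates as classes. There are $\size{V(G)}/4$ such classes, and in each $C_4$ at most $3$ vertices lie within distance $1$ of a cop outside the square, with the analogous dodging argument. When all $n_i=2$ (the hypercube), the bound $\size{V}/4$ arises this way.

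\textbf{Upper bound (cop strategy).} The cop tries to stay ``matched'' to the robber in all coordinates but the last. The key monotonicity: once the cop agrees with the robber on coordinates $1,\dots,i$, she can preserve this agreement while fixing coordinate $i+1$. In the meantime the robber damages only vertices of a bounded set, of size about $\prod_{j\le i} n_j$ for the free coordinates. Summing these phases gives the additive term $\sum_{i=0}^{d-2}\prod_{j=1}^i n_j$.

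Once the cop agrees on the first $d-1$ coordinates, the robber must stay in a line of fixed last coordinate pattern. Because the robber may not move in direction $d$ (he would be adjacent to the cop), every subsequent robber move changes one of the first $d-1$ coordinates and the cop copies it. The robber is then effectively confined to the $\size{V(G)}/n_d$ vertices having a prescribed relation to the cop's last coordinate. The case $n_1=n_2=2$, $n_d=3$ needs a modified shadowing, where the cop tracks coordinates $3,\dots,d$ and uses the $C_4$ structure; this yields the $\size{V}/4$ main term and the sum over $\prod_{j=3}^i n_j$.

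\textbf{Main obstacle.} The hardest step is the phase analysis in the upper bound: proving that while synchronizing coordinate $i+1$ the cop loses nothing already gained, and that the robber's newly damaged vertices in that phase number at most $\prod_{j\le i}n_j$. I would first try a potential argument in which the robber in phase $i$ is shown to be confined to a subcube determined by the cop's coordinates $i+1,\dots,d$.
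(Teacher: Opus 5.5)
Your upper bound is the paper's argument. The step you call the main obstacle is immediate once the cop's rule is fixed as ``always change the lowest-indexed coordinate in which you disagree with the robber''. With that rule, agreement in coordinates $1,\dots,i-1$ is never lost. While coordinate $i$ is still unmatched, any robber move touching a coordinate $\ge i$ is answered by the cop fixing coordinate $i$ before the new vertex counts as damaged. Hence the vertices damaged in that phase share the robber's (not the cop's) values in coordinates $i,\dots,d$. There are at most $\prod_{j<i}n_j$ of them, and summing over $i$ gives the bound.

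The lower bound has a genuine gap. Your robber strategy rests on the claim that from any safe position the robber can eventually enter a line (or $C_4$-fiber) he has not yet visited, and the sketch gives no support for it. Entering a new line means changing one of the first $d-1$ coordinates. That lands on the single vertex of the line carrying the robber's current $d$-th coordinate, which may be adjacent to the cop. Since the cop moves adaptively, she may be able to keep the unvisited lines guarded. The difficulty is reachability, not a factor of two in the number of moves.

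The missing idea is Lemma~\ref{lem:generallower}, which avoids route planning entirely. Once $\dmg(G)$ vertices are damaged, let the robber walk along a shortest path to an undamaged vertex. The cop can only stop him at a moment when he is at $u$ and she is at $v$ with $\dist_G(u,v)\le 2$. By Observation~\ref{robbercloser}, all of $D_G(u<v)$ must then already be damaged. It remains to compute two cases:
\begin{itemize}
\item if $u,v$ differ only in coordinate $k$, then $\size{D_G(u<v)}=\prod_{j\ne k}n_j\ge\size{V(G)}/n_d$;
\item if they differ in coordinates $k$ and $\ell$, then $\size{D_G(u<v)}=(n_k+n_\ell-3)\prod_{j\ne k,\ell}n_j$, which is at least $\size{V(G)}/n_d$ unless $n_k=n_\ell=2$, where it equals $\size{V(G)}/4$.
\end{itemize}

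For $n_1=n_2=2$ this computation gives only $\min\{\size{V(G)}/4,\size{V(G)}/n_d\}$. The stated bound $\size{V(G)}/4$ is in fact false when $n_d$ is large. For $K_2\cart K_2\cart K_8$, the cop strategy above yields $\dmg(G)\le 4+1+2=7<8=\size{V(G)}/4$. Once she matches coordinates $1$ and $2$, the robber can never change coordinate $3$ again and is trapped in one $C_4$-fiber. So no version of your $C_4$-fiber strategy can work there; that case of the lower bound needs the extra hypothesis $n_d\le 4$.
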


The upper bound of the previous theorem can be improved when the values of the different $n_i$ are not all different. Indeed, in~\cite[Corollary 2.7]{HMP}, Huggan, Messinger and Porter proved that for any graphs $G$ and $H$, $\dmg(G\cart H) \leq \max\{\dmg(G)|H|, (\dmg(H)+1)|G|\}$. Applying this bound iteratively to the product of $K_n$ shows that for any $d$, we have $\dmg((K_n)^d) \leq n^{d-1}$ (by defining $G$ as $(K_n)^{d-1}$ and $H$ as $K_n$).

Using the lower bound of the previous theorem, this leads to the following corollary.

\begin{cor}
    For any $n \geq 3$, $\dmg((K_n)^d) = n^{d-1}$.
\end{cor}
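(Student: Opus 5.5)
The plan is to prove the two inequalities separately: the lower bound comes from Theorem~\ref{thm:hamming}, and the upper bound from the product bound of Huggan, Messinger and Porter~\cite[Corollary 2.7]{HMP}.

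\textbf{Lower bound.} Take $n_1=\dots=n_d=n$ with $n\ge 3$. Then $n_1=n_2=2$ fails, so the lower bound in Theorem~\ref{thm:hamming} gives
\[
\dmg((K_n)^d)\ge \frac{\size{V((K_n)^d)}}{n_d}=\frac{n^d}{n}=n^{d-1}.
\]
This is exactly where the hypothesis $n\ge 3$ enters.

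\textbf{Upper bound.} I argue by induction on $d$.
\begin{itemize}
\item For $d=1$, the cop starts on some vertex of $K_n$. The robber must start elsewhere, damaging one vertex, and is captured on the cop's next move. Hence $\dmg(K_n)=1=n^0$.
\item For $d\ge 2$, write $(K_n)^d=G\cart H$ with $G=(K_n)^{d-1}$ and $H=K_n$. By \cite[Corollary 2.7]{HMP},
\[
\dmg(G\cart H)\le \max\{\dmg(G)\,\size{V(H)},\ (\dmg(H)+1)\,\size{V(G)}\}.
\]
The induction hypothesis gives $\dmg(G)\,\size{V(H)}\le n^{d-2}\cdot n=n^{d-1}$. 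The base case gives $(\dmg(H)+1)\,\size{V(G)}=2n^{d-1}$.
\end{itemize}

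The main obstacle is that second term: with the factors in this order it equals $2n^{d-1}$, not $n^{d-1}$. To fix this, I would try swapping the roles: take $G=K_n$ and $H=(K_n)^{d-1}$. The maximum then becomes $\max\{1\cdot n^{d-1},\ (n^{d-2}+1)\,n\}$, which again overshoots $n^{d-1}$, this time by $n$.

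So the cited corollary alone may not close the gap exactly, and I would read off precisely how \cite{HMP} states it. If it does not suffice, I would prove $\dmg((K_n)^d)\le n^{d-1}$ directly with an explicit cop strategy. The cop matches the robber's coordinates one at a time, keeping the robber confined to a set of the form $\{x: x_i=a_i \text{ for } i\in S\}$ for a growing coordinate set $S$. The goal is to charge damage so that the robber effectively damages at most one fiber of size $n^{d-1}$.

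Alternatively, I would try to sharpen the additive terms in the upper bound of Theorem~\ref{thm:hamming} in the equal-size case so that they vanish. I expect establishing the exact value $n^{d-1}$ (rather than $n^{d-1}$ plus lower-order terms) to be the crux.
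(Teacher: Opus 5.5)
\textbf{There is a genuine gap: your upper bound is never established, and the ``main obstacle'' comes from the false value $\dmg(K_n)=1$.} Your lower bound is exactly the paper's. Your first decomposition, $G=(K_n)^{d-1}$ and $H=K_n$ in \cite[Corollary 2.7]{HMP}, is exactly the paper's upper-bound argument.

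Under the rules in Section~\ref{s:prelim}, a vertex the robber occupies at the end of a round becomes damaged only if the robber is \emph{not caught in the following round}. On $K_n$ the robber's starting vertex is adjacent to the cop, and she captures him on her first move. That vertex is never damaged, so $\dmg(K_n)=0$. This matches the paper's remark that the same corollary gives only $2^{d-1}$ for $Q_d$; with $\dmg(K_2)=1$ it would give the useless bound $2^d$.

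With the correct value the obstacle disappears. The second term is $(\dmg(K_n)+1)\,n^{d-1}=n^{d-1}$. By induction the first term is $\dmg((K_n)^{d-1})\cdot n\le n^{d-2}\cdot n=n^{d-1}$. Hence $\dmg((K_n)^d)\le n^{d-1}$, which matches the lower bound from Theorem~\ref{thm:hamming}.

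\textbf{Consequences for the rest of your proposal.} Your base case should read $\dmg(K_n)=0\le n^0$, and this is all the upper-bound induction needs. The equality itself genuinely requires $d\ge 2$. For $d=1$ it is false, and the lower bound of Theorem~\ref{thm:hamming} does not apply there, because the argument of Lemma~\ref{lem:generallower} presupposes that the robber survives to make a move.

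The fallback routes you list do not close the gap on their own:
\begin{itemize}
\item The swapped ordering gives $\max\{0,(n^{d-2}+1)n\}=n^{d-1}+n$, even with the correct $\dmg(K_n)$.
\item The coordinate-matching cop strategy is precisely the one in the proof of Theorem~\ref{thm:hamming}. It leaves the additive term $\sum_{i=0}^{d-2}n^i$, and you give no mechanism for removing it.
\end{itemize}
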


Our next theorem determines the damage number of the missing case $n=2$ which is that of the hypercube $(K_2)^d=Q_d$. Note that the general result from~\cite{HMP} would yield an upper bound of $2^{d-1}$, which is twice the real value of $\dmg(Q_d)$ as proved in the following theorem.

\begin{theorem}\label{hypercube}
For all $d \ge 2$, we have $\dmg(Q_d) = 2^{d-2}$.
\end{theorem}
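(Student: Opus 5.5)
The lower bound follows directly from Theorem~\ref{thm:hamming}. Taking $n_1=\dots=n_d=2$ puts us in the case $n_1=n_2=2$, so $\dmg(Q_d)\ge |V(Q_d)|/4 = 2^{d-2}$. All the work is in the upper bound: I would exhibit a cop strategy that limits the damage to $2^{d-2}$ vertices. Neither the upper bound of Theorem~\ref{thm:hamming} nor the product bound of Huggan, Messinger and Porter suffices, since both give only about $2^{d-1}$. So I would build a strategy adapted to the decomposition $Q_d = Q_{d-2}\cart C_4$. Write every vertex as $(x,y)$ with $x\in\{0,1\}^{d-2}$ (the \emph{projection}) and $y\in\{0,1\}^2$ (the \emph{layer coordinate}).

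The core is a shadowing strategy. Suppose the cop is at $(x,c)$ and the robber is at $(x,\bar c)$, where $\bar c$ is antipodal to $c$ in $C_4$. Then a robber move that changes a layer coordinate lands adjacent to the cop and is captured. A robber move in a projection direction is copied by the cop, which restores the configuration. If the robber passes, the cop stays put, so the configuration is again preserved. Hence once this configuration is reached, the robber is confined forever to the fixed layer $\{(z,\bar c): z\in\{0,1\}^{d-2}\}$, which has exactly $2^{d-2}$ vertices. The plan is to show that the cop can reach this configuration while the vertices damaged along the way still fit inside a set of size $2^{d-2}$. The cleanest way to arrange this is to make the earlier damage land in the same final layer, or to charge each earlier damaged vertex injectively to a projection value $z$ whose vertex $(z,\bar c)$ the robber can then no longer damage.

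The main obstacle is the approach phase. The cop starts at some $(x_0,c)$, and the robber picks its start and then alternates between projection moves (which keep the projection distance $\dist(x_{\rm cop},x_{\rm rob})$ from decreasing) and layer moves. My first attempt would be the following rule for the cop. Whenever the robber changes a layer coordinate, the cop reduces the projection distance by one. Whenever the robber moves in a projection direction, the cop either copies the move or flips a layer coordinate so as to keep its layer antipodal to the robber's. Bipartiteness of $Q_d$ fixes the parity of the cop--robber distance, and I would use it to show that the robber's layer, at the moments it is damaging new vertices, is essentially determined. Alongside this I would use the damage convention that a vertex counts only if the robber is not captured on the cop's next turn, which rules out vertices where the robber is adjacent to the cop. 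The count would then be an injection: every damaged vertex $(z,y)$ is charged to the projection value $z$, and I would argue that no two damaged vertices share a projection value. This would give at most $2^{d-2}$ damaged vertices. Verifying this injectivity through the interleaving of the two kinds of moves is where I expect the delicate case analysis to be. If it fails, the fallback is to choose the cop's initial layer adaptively, or to weaken the target configuration to ``robber at layer distance $\ge 1$'' and show that the extra damage is absorbed by the $2^{d-2}$ count.
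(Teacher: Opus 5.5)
Your lower bound is correct and is the same as the paper's. The upper bound is not established. The approach phase is the whole content of the proof, you leave it open, and both mechanisms you propose for it fail.

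\emph{The injection fails.} No cop strategy can give an injection from damaged vertices to projection values. In $Q_3=Q_1\cart C_4$ (projection coordinate first), let the cop start at $(0,00)$ and the robber at $(1,10)$. Whatever the cop's first move, she ends at distance at least $2$ from $(1,11)$. So the robber damages both $(1,10)$ and $(1,11)$, which have the same projection value.

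\emph{The rule is not a strategy and fails under copying.} Your cop rule (``either copies or flips'') does not specify a move. Under the copying reading, the cop's layer never changes, and a copied robber projection move leaves the projection distance unchanged. So a robber starting far away in projection can walk his whole layer, damaging $2^{d-2}$ vertices. He then makes one layer move, to which the cop only shortens the projection distance by one, and walks a second layer, for at least $2^{d-1}$ damage.

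\emph{The frame is wrong.} The underlying problem is fixing the two ``layer'' coordinates in advance. Any disagreeing coordinate the robber flips can no longer be one on which all damaged vertices agree. So which pair ends up frozen is decided by the robber's play, not by the cop.

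The missing idea is a monotone invariant that lets the frozen pair emerge from play. In the paper, the cop first fixes parity: she steps toward the robber if their distance is odd, and stays otherwise. Afterwards she responds as follows.
\begin{itemize}
\item If the robber flipped a coordinate in which they agreed, she flips the same coordinate.
\item If he flipped one in which they disagreed, she flips some other coordinate in which they disagree.
\end{itemize}
Let $D_t$ be the set of disagreeing coordinates after the cop's $t$th turn. Then the $D_t$ are nested and of even size, and no coordinate the robber ever flips belongs to any later $D_t$. Apart from a robber starting adjacent to the cop, capture occurs only when $|D_t|=2$ and the robber flips one of those two coordinates. Hence every damaged vertex agrees with the robber's starting vertex on the last (eventually constant) set $D_t$, which has size at least $2$. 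So at most $2^{d-2}$ vertices are damaged. Your shadowing configuration is exactly the final stage $|D_t|=2$ of this strategy, but with the pair chosen by the game rather than fixed in advance by the cop.
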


Although the vertex set of the hypercube is most often viewed as the set of 0,1-sequences of length $d$, via characteristic vectors the vertex set can be considered to be the power set of $[d]=\{1,2\dots,d\}$. Next, we address the damage number of graphs defined through intersection or containment properties of set families. For a set $S$, we denote by $\binom{S}{k}$ the family of all $k$-subsets of $S$.

The \textit{middle graph} $M_k$ is the bipartite graph with parts $\binom{[2k+1]}{k}$ and $\binom{[2k+1]}{k+1}$ and $F\in \binom{[2k+1]}{k},G\in \binom{[2k+1]}{k+1}$ are joined by an edge if and only if $F\subseteq G$.

The \textit{Johnson graph} $J(n,k)$ has vertex set $\binom{[n]}{k}$, and $F$ and $G$ are connected by an edge if and only if $|F\cap G|=k-1$.

We prove the following results.

\begin{theorem}\label{middle}
For any $k\ge 2$, we have $\dmg(M_k)=2\binom{2k-1}{k-1}$.
\end{theorem}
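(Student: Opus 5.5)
The plan is to prove the two inequalities separately. Both rest on the local structure of $M_k$. It is $(k+1)$-regular and bipartite. It has girth $6$, since two distinct $k$-sets lie in at most one common $(k+1)$-set, and dually. Fix two elements $a\neq b$ of $[2k+1]$. The target number $2\binom{2k-1}{k-1}$ counts the $k$-sets containing $a$ but not $b$ together with the $(k+1)$-sets containing $b$ but not $a$; these are $\binom{2k-1}{k-1}$ and $\binom{2k-1}{k}=\binom{2k-1}{k-1}$ vertices respectively. A second family of the same size is the $(k+1)$-sets containing both $a$ and $b$ together with the $k$-sets containing neither. I expect these ``pair-determined'' families to be exactly what the robber can be confined to, and also what it can always reach.

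\textbf{Upper bound.} I would describe a cop strategy that, after an initial phase, permanently forbids a large part of the graph. The intended mechanism uses two moves by the cop:
\begin{itemize}
\item the cop moves between a $k$-set $F$ and a $(k+1)$-set $F\cup\{x\}$ while tracking a coordinate of the robber's position;
\item whenever the robber tries to change the status of $a$ or $b$ in its current set (add $a$, or remove $b$), the cop is adjacent to the target vertex, or can become so in time.
\end{itemize}
The key claim to verify is that along every robber walk, each vertex outside one pair-determined family is either adjacent to the cop when the robber would step there, or is reached only after the cop has captured the robber. This requires choosing $a,b$ adaptively from the robber's starting vertex and the cop's first move. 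The counting step is routine: it shows that the set of vertices the robber can ever stand on has size at most $2\binom{2k-1}{k-1}$.

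\textbf{Lower bound.} Here the robber commits to the family $\mathcal{X}$ of sets containing $a$ but not $b$, with $a,b$ chosen far from the cop's start. It then walks along a closed tour of the subgraph induced by $\mathcal{X}$. That subgraph is isomorphic to a ``shifted'' middle-type bipartite graph on $[2k+1]\setminus\{a,b\}$, with parts $\binom{[2k-1]}{k-1}$ and $\binom{[2k-1]}{k}$, and it is connected. The robber must never step into the cop's closed neighbourhood. Because the girth is $6$, the cop at any time dominates at most one neighbour of any vertex at distance $2$ from it. Each vertex of $\mathcal{X}$ has at least two neighbours inside $\mathcal{X}$ (for $k\ge2$), so the robber always has a safe continuation. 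The robber therefore reroutes around the cop and eventually visits all of $\mathcal{X}$, using connectivity of the remaining subgraph together with the fact that the cop cannot block an edge cut of size one.

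\textbf{Main obstacle.} I expect the upper bound to be the hard part: showing the cop really can seal off everything outside a family of size $2\binom{2k-1}{k-1}$. Since the cop number of $M_k$ is large, capture cannot be used; the cop must guard, not chase. My first attempt would be an invariant of the form ``the cop's set and the robber's set agree on $\{a,b\}$ up to one flip''. I would check that the cop can restore this invariant after every robber move, using that any single element can be added or removed by the cop in one step.
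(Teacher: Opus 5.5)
Both halves of your argument have genuine gaps. \textbf{Lower bound.} Your extremal family $\mathcal{X}=\{H:\ a\in H,\ b\notin H\}$ is the right one. However, a robber who fixes $a,b$ in advance and then tours $\mathcal{X}$ cannot guarantee to damage all of it. Every vertex of $M_k$ is within distance $2$ of $\mathcal{X}$, so $a,b$ can never be ``far'' from the cop. She walks into $\mathcal{X}$ within a few rounds and parks forever on a vertex of $\mathcal{X}$ that is not yet damaged. Only a couple of vertices are damaged by then, and if needed she can step to an undamaged $\mathcal{X}$-neighbour. That vertex is never damaged, and since $|\mathcal{X}|$ equals the target exactly, your count falls short. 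The girth and degree observations only show that the robber can \emph{survive} inside $\mathcal{X}$, not that he can \emph{cover} it. The obstacle is not an edge he can route around but a vertex the cop may occupy permanently.

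The missing idea is that the pair must be determined by the play, not fixed in advance. This is what Lemma~\ref{lem:generallower} does. At the end of optimal play, a robber heading along a shortest path to an undamaged vertex must be blocked at some moment, with himself at $F$ and the cop at $G$, $d(F,G)\le 2$. By Observation~\ref{robbercloser}, every vertex strictly closer to $F$ than to $G$ is still reachable, hence already damaged. Since $d(F,H)=|F\triangle H|$ in $M_k$, for $d(F,G)=2$ with $\{i\}=F\setminus G$ and $\{j\}=G\setminus F$ this set is exactly $\{H:\ i\in H,\ j\notin H\}$, of size $2\binom{2k-1}{k-1}$. Adjacent pairs give larger sets.

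\textbf{Upper bound.} Here no strategy is actually given, and the suggested invariant points the wrong way. The pair that finally confines the robber is one on which cop and robber \emph{disagree}. It is also not determined by the opening position: it is whatever part of $R\triangle C$ the robber never touches. Capture is essential too, contrary to your remark: the cop shadows the robber as in Theorem~\ref{hypercube}.
\begin{itemize}
\item Her first move puts her on the robber's side of the bipartition (she stays put or steps toward him).
\item She copies every robber change to an element on which $R$ and $C$ agree.
\item If he adds an element of $C\setminus R$, she adds one of $R\setminus C$; if he removes an element of $R\setminus C$, she removes one of $C\setminus R$.
\end{itemize}
Then $R\triangle C$ never grows and the robber never touches its surviving elements. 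Once $|R\triangle C|=2$, touching either of them lets her capture him. Hence every damaged vertex agrees with the robber's final set on the final $R\triangle C$, which has size $2t\ge 2$. This bounds the damage by $2\binom{2k+1-2t}{k-t}\le 2\binom{2k-1}{k-1}$.
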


\begin{theorem}\label{johnson}
For any $n> k \ge 3$ and for $k=2, n\ge 4$, we have $\dmg(J(n,k))= \binom{n-2}{k-1}$.
\end{theorem}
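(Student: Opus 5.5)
The plan is to prove the two inequalities $\dmg(J(n,k))\ge\binom{n-2}{k-1}$ and $\dmg(J(n,k))\le\binom{n-2}{k-1}$ separately. Both are organised around the family
\[\mathcal F_{x,y}=\Big\{F\in\tbinom{[n]}{k}: x\in F,\ y\notin F\Big\},\qquad |\mathcal F_{x,y}|=\tbinom{n-2}{k-1},\]
for a pair of distinct elements $x,y$. I have not checked that this family is the right extremal object, and both directions depend on that guess.

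Throughout I would use the local geometry of $J(n,k)$. Let the cop be on $C$ and the robber on $R$. A robber move replaces one element of $R$ by one element of $[n]\setminus R$. The robber is caught next turn exactly when $|C\cap R|\ge k-1$ after his move. So the only safe configurations for the robber after his move have $|C\cap R|\le k-2$, that is, distance at least $2$. The central bookkeeping is the pair of difference sets $C\setminus R$ and $R\setminus C$, and how each move changes them.

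\textbf{Lower bound (robber strategy).} The cop's starting vertex $C$ is fixed first. The robber then picks $x\notin C$ and $y\in C$ and starts at a set in $\mathcal F_{x,y}$ far from $C$. He aims to visit every member of $\mathcal F_{x,y}$ before being caught. Members of $\mathcal F_{x,y}$ adjacent to each other differ by a swap of two elements, both different from $x$ and $y$. So the robber moves inside a copy of $J(n-2,k-1)$, extended by the fixed element $x$.

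The key claim is that the cop cannot stop such a tour. The cop threatens only the vertices within distance $1$ of $C$. The robber needs to avoid these sets and also keep some unvisited member of $\mathcal F_{x,y}$ reachable. Since $k\ge 3$ (or $k=2$ with $n\ge 4$), $J(n-2,k-1)$ has enough room to step around the cop's neighbourhood. I would write the robber's response as an explicit rule: swap an element lying in $R\setminus C$ for one in $[n]\setminus(C\cup R)$ whenever possible. I would also handle the small cases $k=2$ and $n=k+1$ directly.

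If the full tour turns out to be impossible, the fallback is to weaken the claim. The robber then only needs to damage $\binom{n-2}{k-1}$ sets in total, counting sets both inside and outside $\mathcal F_{x,y}$.

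\textbf{Upper bound (cop strategy).} The cop first moves to a vertex at distance exactly $2$ from the robber. After that she keeps the invariant that, after each of her moves, $|C\cap R|=k-2$, $R\setminus C=\{c,d\}$ and $C\setminus R=\{a,b\}$. Every robber move is one of three kinds:
\begin{enumerate}
\item He inserts $a$ or $b$ into $R$. Then he ends adjacent to $C$ and is caught.
\item He swaps $c$ or $d$ for an element outside $C\cup R$.
\item He swaps an element of $C\cap R$ for an outside element.
\end{enumerate}
The cop answers each move by a mirroring move that restores the invariant. The aim is to design the mirroring so that some element $x\in R$ stays in $R$ forever and some $y\notin R$ never enters $R$. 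This would confine all damage to $\mathcal F_{x,y}$, plus a constant number of vertices from the initial phase. The extra constant has to be absorbed, either by choosing the starting vertex cleverly or by showing that the initially damaged vertices already lie in $\mathcal F_{x,y}$.

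I expect the main obstacle to be this cop strategy, namely getting a mirroring rule that really fixes one element inside and one element outside the robber's set. The difficulty is move (3): the robber can discard an element the cop was counting on. My first attempt would be to let $y$ be one of $a,b$; such an element is permanently unusable for the robber, because inserting it always lands him next to the cop. I would also let $x$ be an element that the cop never lets leave $C\cap R$. To do this she answers any swap of a common element $z$ by making the same swap of $z$ herself. It then has to be checked that this answer keeps the distance at $2$ and never lets the robber reach distance $3$. That check is where I anticipate the case $k=2$ and the restriction $n\ge 4$ becoming necessary.
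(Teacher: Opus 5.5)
\textbf{Lower bound.} Your family $\mathcal F_{x,y}$ is the right extremal object, but neither half of your argument actually reaches it. The full-tour claim is false. Once your rule has fixed $x\notin C$ and $y\in C$, the cop can step to $C\setminus\{y\}\cup\{x\}\in\mathcal F_{x,y}$ on her first move and never leave it. That member of $\mathcal F_{x,y}$ is then never damaged, so no tour of the whole family is forced. Your fallback (``damage $\binom{n-2}{k-1}$ sets in total'') just restates the claim.

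The missing idea is the non-constructive argument of Lemma~\ref{lem:generallower}. Take a moment of optimal play at which $\dmg(J(n,k))$ sets are already damaged, and let the robber walk along a shortest path to an undamaged set. He can only be stopped while at some $u$ with the cop at $v$ and $d(u,v)\le 2$. By Observation~\ref{robbercloser}, every set in $D(u<v)$ must then already be damaged. For adjacent $H,H'$, the family $D(H<H')$ is exactly your $\mathcal F_{x,y}$ with $\{x\}=H\setminus H'$ and $\{y\}=H'\setminus H$. At distance $2$ its size is $2\binom{n-4}{k-1}+\binom{n-4}{k-2}+2\binom{n-4}{k-3}$, and one must check that this is at least $\binom{n-2}{k-1}$.

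\textbf{Boundary cases.} This check also shows that your plan to ``handle the small cases directly'' cannot succeed, because the statement is false there.
\begin{itemize}
\item For $k=2$ the distance-$2$ value is $2n-7$, which is below $n-2$ at $n=4$. Indeed $J(4,2)$ is the octahedron, so $\dmg\le|V|-\Delta-1=6-4-1=1<2$.
\item For $n=k+1$ the graph $J(n,k)$ is $K_n$. The robber is caught on the cop's first move, so the damage number is $0<1$. There is no set ``far from $C$'' to start on, and Lemma~\ref{lem:generallower} presupposes that the robber survives to make a move.
\end{itemize}
The equality genuinely needs $n\ge k+2$, and $n\ge 5$ when $k=2$. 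The body of the paper states $n\ge5$ for $k=2$; its argument does not cover $n=k+1$ either.

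\textbf{Upper bound.} Your classification of robber moves has a hole. For $k\ge 3$ the robber can drop some $z\in C\cap R$ and insert $a\in C\setminus R$. Then $|C\cap R'|=k-2$, so he is still at distance $2$ and is not caught. Hence $a$ and $b$ are not permanently unusable.

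Freezing an element $x\in C\cap R$ is also impossible. Only the robber decides what leaves $R$, and copying his swap of $z$ simply removes $z$ from both sets. What the cop can freeze are the difference sets, provided she abandons the distance-$2$ invariant and lets the distance drop. The paper's cop stays put once and then responds as follows:
\begin{itemize}
\item If the robber swaps a common element for an element outside $C$, she makes the same swap.
\item If the robber discards an element of $R\setminus C$ for an element $w\notin C$, she replaces some element of $C\setminus R$ by $w$.
\item If the robber inserts an element of $C$, she moves so that $|C\cap R|$ grows.
\end{itemize}
This gives $R_{j+1}\setminus C_{j+1}\subseteq R_j\setminus C_j$ and $C_{j+1}\setminus R_{j+1}\subseteq C_j\setminus R_j$ from the start. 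Once these sets stabilise at a common size $s\ge1$, every robber position contains the final $R\setminus C$ and misses the final $C\setminus R$. So at most $\binom{n-2s}{k-s}\le\binom{n-2}{k-1}$ sets are damaged, and there is no initial-phase constant to absorb.
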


Graph searching games have been studied in numerous settings on various random graph models, and the book by Bonato and Prałat~\cite{bonato-pralat-book} collects many of these results. In the paper where the damage number was first defined, Cox and Sanaei~\cite{CS} also introduce $D_{dmg}(G):=\dmg(G) /|V(G)|$ and investigate which values in the interval $[0,1]$ it can attain. They showed~\cite[Theorem 5.1]{CS} that for any $r\in [0, 1/2]$ and any $\varepsilon>0$ there exists a graph $G$ with $D_{dmg}(G)$ within $\varepsilon$ of $r$. In the part~(i) of the following theorem on the Erd\H os-Rényi random graph we settle this problem for all $r\in [0, 1]$.

\begin{theorem}\label{random}
\begin{itemize}
    \item[(i)] 
    For a constant $0<p<1$, we have $\dmg(G(n,p))=n-(1+o(1))pn$ w.h.p.
    \item[(ii)]
    For $\frac{\log^{2/5}n}{n^{1/5}} \ll p(n) \ll 1$, we have $\dmg(G(n,p))= n-\Theta(np)$ w.h.p.
\end{itemize}
\end{theorem}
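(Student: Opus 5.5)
We prove two bounds separately: an upper bound obtained from a cop strategy, and a lower bound obtained from a robber strategy. In this game the cop moves first, then the robber chooses his start (standard convention of Cox and Sanaei). Write $d \approx np$ for the typical degree. W.h.p. all degrees are $(1+o(1))np$ in regime (i), and $\Theta(np)$ in regime (ii). In regime (ii) this follows from Chernoff bounds, since $np \gg n^{4/5}$.

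\emph{Upper bound.} The cop's aim is to protect roughly $pn$ (or $\Theta(np)$) vertices from ever being damaged. The cop starts at a vertex $v$ and simply stays there. Any robber visiting $N[v]$ would be captured, since the cop can move onto him. So the robber never enters $N(v)$, and the protected set is $N[v]$, of size $(1+o(1))pn$ w.h.p. This gives $\dmg \le n - |N[v]| \le n-(1+o(1))pn$, and similarly $n-\Omega(np)$ in regime (ii).

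\emph{Lower bound (the main work).} We must show that the robber can damage all but $(1+o(1))pn$ vertices, respectively all but $O(np)$ vertices.

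1. The robber's strategy: he always moves to an undamaged vertex that is not adjacent to and not equal to the cop's position, and stays put if no such move exists.

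2. Safety of a move requires that the robber's current vertex $r$ has an undamaged neighbor outside $N[c]$, where $c$ is the cop's position after the cop's move. Once the robber has moved away, the cop cannot capture him immediately.

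3. A robber at $r$ with cop at $c$ is stuck only if $N(r)\cap U \subseteq N[c]$, where $U$ is the undamaged set. If $|U| = m$ with $m \ge (1+\eps)pn$ (or $m\ge Cnp$), we would like random-graph expansion to guarantee $|N(r)\cap U \setminus N[c]|>0$ for every $r, c$.

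4. Step 3 cannot hold for every set $U$, because the adversary shapes $U$. Instead we use a union-bound-friendly property: for every vertex $r$ and every set $S$ of size $m$, $|N(r)\cap S|$ is close to $pm$. This fails for arbitrary $S$, since $r$ can have zero neighbors in a chosen $S$ of size $n-pn$.

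5. The fix is to build a stronger robber strategy. The robber keeps a safe region: he restricts attention to vertices with few common neighbors with the cop and uses codegree bounds. W.h.p. every pair of vertices has $(1+o(1))np^2$ common neighbors, so the cop's neighborhood covers only about a $p$-fraction of $N(r)$.

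6. The key quantity is the undamaged vertices adjacent to the robber. We argue that when the robber is stuck, $N(r)\cap U\subseteq N(c)$. This gives $|N(r)\cap U| \le |N(r)\cap N(c)| \approx np^2$.

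7. We then show that the robber can choose his moves so that $|N(r)\cap U|$ stays large until $|U|\approx pn$. For this, he prefers moving to the undamaged vertex with the most undamaged neighbors.

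8. Steps 6 and 7 are bridged by a discrepancy/expansion statement. W.h.p., for every set $U$ with $|U|\ge(1+\eps)pn$ and every $c$, some $u\in U\setminus N[c]$ satisfies $|N(u)\cap U\setminus N[c]| \ge \tfrac12 p|U|$. This is proved by a union bound over sets, using edge-count concentration $e(U)\approx p|U|^2/2$, which holds uniformly for $|U|\gg \log n/p$.

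9. The robber must also reach such a $u$, which is the main obstacle. We handle it by a two-step lookahead using common neighbors and connectivity: he moves within $U$ through vertices avoiding $N[c]$. Pairs have about $np^2$ common neighbors, and the requirement $np^2 \gg$ (polylog error terms) is where the condition $p\gg \log^{2/5}n/n^{1/5}$ should enter, via balancing $n p^5 \gg \log^2 n$ in the error terms of the codegree and induced-edge concentration.

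In regime (ii) we only need constants, so the robber aims to stop once $|U|\le Cnp$. This yields $\dmg\ge n-O(np)$.
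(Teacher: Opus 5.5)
Your upper bound is correct and is the paper's argument (the cop guards $N[v]$ for a vertex $v$ of maximum degree). The lower bound, however, is not proved: steps 5--9 are a plan whose decisive parts are left open.

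With the rule in step 1 (stay put when stuck), the cop can also stay put, so damage stops the first time the robber is stuck. The robber must therefore walk across damaged vertices, which is exactly the ``reachability'' problem of step 9, and there you offer only a sketch. This is not a routine detail. W.h.p.\ any two vertices have a common neighbour, so whenever the robber is two steps from a designated target $u$, the cop can move to a vertex of $N(c)\cap N(u)$ and make $u$ unsafe. You would have to keep many targets alive at once, against a set $U$ shaped by the cop, and nothing in the proposal does this.

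Step 8 is also false as stated. Take $U=N[c]\cup W$ with $W$ a random set of $\eps pn$ vertices outside $N[c]$. Then every $u\in W$ has only about $\eps p^2n<\tfrac12 p|U|$ neighbours in $U\setminus N[c]$ when $\eps<1$. Finally, your account of where $np^5\gg\log^2 n$ enters is a guess; codegree and induced-edge concentration need far less.

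The idea you are missing is to argue from the end of the game rather than to design a robber strategy. Take optimal play and a moment after which no further damage occurs, with the robber on $x_1$ and the cop on $y_1$.
\begin{enumerate}
\item Every vertex of $N(x_1)\setminus N[y_1]$ is already damaged, since otherwise the robber steps there and damages it.
\item Let the robber step to such an $x_2$ and let $y_2$ be the cop's reply. Then all of $N(x_2,\bar{x}_1,\bar{y}_2)$ is damaged, and so on.
\item The sets $N(x_i,\bar{x}_1,\dots,\bar{x}_{i-1},\bar{y}_i)$ are pairwise disjoint and all damaged. Each is determined by at most $i+1$ named vertices.
\item Chernoff plus a union bound over at most $n^{k+1}$ vertex sequences (not over sets) shows that w.h.p., for every possible play, all of them have size at least $(1-f)p(1-p)^in$ for $i\le k$. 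This needs only $npf^2(1-p)^k\gg k\log(en/k)$ (Lemma~\ref{lem:chernoff}).
\item In particular these sets are nonempty, so the walk can be continued for $k$ steps whatever the cop does. Summing gives $\dmg(G)\ge(1-f)n\bigl((1-p)-(1-p)^{k+1}\bigr)$.
\end{enumerate}
Constant $p$ with $k$ large gives $n-(1+o(1))pn$. Taking $k=\log(1/p)/p$ and $f=p$ gives $n-O(np)$. The lemma's hypothesis then reads $np^4\gg p^{-1}\log(1/p)\log n$, which is exactly where $np^5\gg\log^2 n$ is used. This argument sidesteps both the adversarially shaped $U$ and the reachability problem.
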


Cox and Sanaei also established the upper bound $\dmg(G)\le |V(G)|-\Delta(G)-1$, achieved by having the cop defend the closed neighborhood of a vertex of maximum degree, and they asked \cite[Problem 6.2]{CS} for graphs for which this bound is tight. (The bound is achieved by having the cop defend the closed neighborhood of a vertex of maximum degree.) Observe that the Theorem \ref{random} (i) shows that the Erd\H os-R\'enyi random graph $G(n,p)$ with constant $p$ satisfies $\dmg(G)=n-(1+o(1))\Delta(G)$. The bound $|V(G)|-\Delta(G)-1$ cannot be tight for connected bipartite graphs (apart from stars) as the cop can defend the neighborhoods of two adjacent vertices $x$ and $y$ maximizing $d(x)+d(y)$. This shows that $\dmg(B)\le |V(B)|-\Delta_2(B)$ holds for all bipartite graphs $B$, where $\Delta_2(B)=\max\{d(x)+d(y):xy\in E(B)\}$. We show that the incidence bipartite graph of many (but not all!) projective planes almost achieves this bound. The number of vertices in the incidence graph $G$ of a projective plane of order $q$ is $2(2^2+q+1)$ and all degrees are $q+1$, so $\Delta_2(G)=2(q+1)$. The next theorem establishes that for many projective planes, we have $\dmg(G)=n-\Delta_2(G)-2$. (The definitions for all terms in the theorem below are provided in Section~\ref{s:results}.)

\begin{theorem}\label{thm:proj_plane2}
If $G$ is the incidence graph of a projective plane $\Pi$ of order $q$, then 
\[2q^2-2\sqrt{q}\le \dmg(G)\le 2q^2-2 .\]
Furthermore, 
\begin{itemize}
    \item 
    if $\Pi$ does not contain a Baer subplane, then $\dmg(G)=2q^2-2$;
    \item 
    if $\Pi$ contains a Baer subplane and is doubly transitive, then $\dmg(G) = 2q^2-2\sqrt{q}$. 
\end{itemize}
\end{theorem}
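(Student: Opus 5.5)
We need $|V(G)| = 2(q^2+q+1)$; the statement's $2^2$ is a typo. So $|V(G)| - \Delta_2(G) - 2 = 2q^2+2q+2-2q-2-2 = 2q^2-2$. The plan has two halves: a cop strategy giving the upper bounds, and robber strategies giving the lower bounds. Both exploit the fact that any two points lie on a unique line and any two lines meet in a unique point, so vertices at distance $2$ have a unique common neighbour.

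\textbf{General upper bound $2q^2-2$.} I would refine the $\Delta_2$ strategy. The cop starts on a flag, say a point $P$ on a line $\ell$, and is willing to let the robber damage everything outside $N[P]\cup N[\ell]$. These $2q+2$ vertices are $P$, $\ell$, the other $q$ lines through $P$, and the other $q$ points on $\ell$. To save two more vertices, I would argue as follows. Once the robber commits to a start vertex $v$, the cop, moving first, can shift the protected flag, or use a tempo, so that the protected region also contains one extra point and one extra line. For example, if the robber starts on a point $R\notin\ell$, the cop steps toward the line $PR$ and threatens $R$, forcing the robber to move. This yields a guard region of size $2q+4$, which gives $\dmg(G)\le 2q^2-2$.

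The key invariant is that the cop oscillates between two adjacent vertices $x,y$. From there she can answer any robber move into $N(x)\cup N(y)$ by capture, because every neighbour of a vertex in $N(x)$ is at distance at most $2$ from $x$.

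\textbf{Lower bound $2q^2-2\sqrt q$ and the Baer dichotomy.} For the robber, the strategy is to avoid the cop's neighbourhood. Whenever the cop sits at $c$, the robber sits at distance $\ge 3$ from $c$, since the diameter is $3$ and antipodal pairs are non-incident point/line pairs. Then the robber moves to an undamaged vertex at distance $\ge 2$ from the cop's next position.

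Over time, the cop's positions sweep out a set $S$ of points and lines. The robber damages everything except roughly $N[S]$, together with vertices it cannot reach safely. I would show that the set of vertices forever denied to the robber must form a closed configuration. Any point on two guarded lines, and any line through two guarded points, is reachable only via the cop's protection. Hence the undamaged set generates a subplane unless it is a degenerate configuration.

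A proper subplane of order $m$ satisfies $m^2\le q$ (Bruck), with equality exactly for a Baer subplane. A cop strategy based on a subplane of order $m$ saves about $2(q+1)+2\cdot(\text{extra})$ vertices. Counting shows that the saving is at most $2q+2\sqrt q$ over the trivial bound, which gives $\dmg(G)\ge 2q^2-2\sqrt q$. If no Baer subplane exists, degenerate configurations save only $2q+4$, which gives equality $2q^2-2$.

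For doubly transitive planes with a Baer subplane $B$, I would construct a matching cop strategy. The cop patrols inside $B$, and every point outside $B$ lies on exactly one line of $B$ (the Baer property). So the cop protects $B$'s elements plus tangent structure, achieving damage $2q^2-2\sqrt q$. Double transitivity (hence Desarguesian, by Ostrom–Wagner) lets the cop re-centre the configuration around the robber's starting flag by an automorphism.

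\textbf{Main obstacle.} I expect the hardest step to be the robber's lower-bound argument: turning ``undamaged vertices'' into ``a closed substructure, hence contained in a subplane''. The robber must react adaptively, and one has to show that the cop cannot profit from a moving rather than a static configuration. I would first try to prove that, without loss of generality, the cop eventually oscillates on a fixed edge or small set. I would then handle the finitely many configuration types by counting incidences.
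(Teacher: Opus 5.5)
There is a genuine gap, most seriously in your upper bound $2q^2-2$: the target you describe cannot be achieved. Oscillating on a flag $P\in\ell$ keeps exactly $N[P]\cup N[\ell]$ safe, i.e.\ $2q+2$ vertices, which only yields $2q^2$. No tempo lets the cop also keep a point $Q\notin\ell$ (or, dually, a line $m$ with $P\notin m$). The closure of $N[P]\cup N[\ell]\cup\{Q\}$ under taking common neighbours is all of $V(G)$: it acquires every line $QR$ with $R\in\ell$, hence every point $X\notin PQ$ as the meet of $PX$ and $QX$, hence everything. But the fork argument below shows that, once the robber can force no new damage, the closure of the undamaged set never contains the robber's vertex.

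The missing idea is to guard a \emph{closed} set that contains no flag neighbourhood. The paper takes $S=N[x]\cup N[y]$ for a point $x$ and a line $y$ with $x\notin y$ (and $y$ avoiding the robber's start). This set is closed, with $q+2$ points and $q+2$ lines, and the cop guards it by Lemma~\ref{lem:proj_plane_helper}. She stays in $S$, in the robber's part, adjacent to the robber's neighbours in $S$. By closedness a vertex outside $S$ has at most one neighbour in $S$. When the robber steps to $z\notin S$ with unique $S$-neighbour $z'$, the cop moves to the common neighbour of her vertex and $z'$, which again lies in $S$. Your observation that every point outside a Baer subplane $B$ lies on exactly one line of $B$ is what makes the same mechanism guard $B$, and exactly $B$: by the lower bound, nothing ``plus tangent structure'' can be saved. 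Double transitivity then maps a pair $x'\in B$, $y'\notin B$ with a common neighbour in $B$ onto the cop's vertex after her first move and the robber's start.

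For the lower bound your skeleton matches the paper's, but both load-bearing steps are only asserted.

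First, the closure step. Let $U$ be the undamaged set at a position where the robber can force no new damage. Suppose he could ever safely stand on the common neighbour $v$ of two vertices $a,b$. The cop is not on $v$, so she cannot be within distance $1$ of both, since their only common neighbour is $v$; he then steps safely onto $a$ or $b$. By induction on closure level he never safely enters the closure $\overline U$. Since he currently stands outside it, $\overline U$ is a proper closed set. This concerns a single position, so no structural claim about the cop's play is needed. The reduction you propose, to a cop who eventually oscillates on a fixed edge $ab$, is in fact false: such a cop saves at most $N[a]\cup N[b]$, which would give $\dmg(G)\ge 2q^2$, contradicting $\dmg(G)\le 2q^2-2$.

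Second, ``degenerate configurations save only $2q+4$'' is exactly Lemma~\ref{lem:proj_plane_closed}, which you must prove. A closed set with at least $q+3$ points (dually, lines) already satisfies axiom (3), hence is a subplane. To see this, take three collinear points of the set and two of its points off their line, and check that one of two quadruples has no three collinear. Only then does Bruck's theorem, in the form $m^2=q$ or $m^2+m\le q$ for a proper subplane of order $m$, give $|\overline U|\le\max\{2q+4,\,2q+2\sqrt q+2\}$ in general. It also gives $|\overline U|\le 2q+4$ when there is no Baer subplane, since a non-Baer subplane has at most $2q+2$ vertices.
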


Finally, we address the complexity of the decision problem corresponding to the damage number. The input is a pair $(G,N)$ of a graph and a positive integer, and the answer is yes if and only $\dmg(G)\ge N$.

\begin{theorem}\label{complexity}
    The decision problem \textsc{Damage Number} is {\sf PSPACE}-complete.
\end{theorem}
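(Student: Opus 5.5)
Membership in {\sf PSPACE} is routine. A position of the game is given by the cop's vertex, the robber's vertex, whose turn it is, and the set of damaged vertices. The damaged set only grows, so after at most $|V(G)|\cdot 2|V(G)|^2$ moves either some new vertex is damaged or a position repeats with the same damaged set. We can therefore decide $\dmg(G)\ge N$ by a depth-first alternating search over game histories of polynomial length. Whenever a configuration (with its damaged set) repeats, we declare the current damage final, since the cop can then repeat forever. The stored state is polynomial, so the whole search runs in polynomial space. The one thing to check here is that cycling is harmless: the robber cannot gain from an infinite play, because damage is bounded by the number of distinct vertices visited, and that set stabilises.

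For hardness I will reduce from a known {\sf PSPACE}-complete pursuit or positional game. The natural source is the one-cop Cops and Robbers variant in which each player has a fixed protected region, or more concretely a game played on a graph with a designated target. My first choice is \textsc{Generalized Geography}, or the game ``can the robber reach vertex $t$ before capture'', which is shown {\sf PSPACE}-complete by the standard reductions used for cop-number games (Mamino; Kinnersley). The plan is as follows.

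First, take a {\sf PSPACE}-complete instance in which the question is whether the robber can escape to a target region against one cop. If necessary, derive such an instance from QBF using variable gadgets: a path for each variable where the cop or the robber chooses the side, and clause gadgets that the robber can pass through only if the clause is satisfied.

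Second, attach to the target a huge ``reward'' gadget: a large clique or a long set of pendant structures with $M$ vertices, where $M$ exceeds the total size of the rest of the construction. The robber, once there, can damage all $M$ vertices; for instance, the reward is a region the cop cannot reach in time because it is far away or guarded by a distance gap.

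Third, set $N=M$ plus a small offset. If the robber wins the source game, it reaches the reward and damages at least $N$ vertices. If the robber loses, the cop either captures it or prevents entry to the reward, so the damage is at most the size of the base construction, which is less than $N$.

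The main obstacle is the calibration of the reward gadget. It must be impossible for the robber to damage many reward vertices without ``winning'', and the cop must not be able to defend the reward cheaply once the robber gets there. One option is a reward of many disjoint pendant paths of length $2$ hanging off a single entrance vertex $t$. A cop adjacent to $t$ can enter and capture, so I would instead use a large complete bipartite or clique blob connected via long paths. That way, a robber arriving with a lead of at least two moves damages nearly all of it, while the cop starting far away cannot catch up. I would then also need to make the source game's winning condition expressible as ``arrive at $t$ with the cop at distance at least $d$''. I would try to adapt the reduction of Kinnersley for the $k$-cop game, but specialised to one cop, with the distance condition built in by lengthening paths uniformly (subdividing edges by a constant factor preserves the game up to timing).
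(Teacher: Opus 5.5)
Your membership argument is fine and is essentially the paper's. There are polynomially many cop/robber configurations per damage level, so optimal play has polynomial length and a recursive search uses polynomial space. The hardness plan, however, rests on a source problem that is not {\sf PSPACE}-hard.

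With one cop and one robber, a position is just the pair of occupied vertices plus the player to move, so there are $O(|V|^2)$ positions. In the damage game, movement does not depend on which vertices are damaged. Consequently ``can the robber reach $t$ (with the cop at distance at least $d$) before being captured'', or any variant with fixed protected regions, is a reachability game on a polynomial-size arena. It is decidable in polynomial time by backward induction. The hardness results of Mamino ({\sf PSPACE}) and Kinnersley ({\sf EXPTIME}) depend on the number of cops being part of the input, which is what makes the position space exponential. Specialising them to one cop destroys the hardness.

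The same objection defeats your QBF fallback. A clause gadget that the robber ``can pass through only if the clause is satisfied'' would need movement to depend on earlier choices, and it cannot. Indeed, by your own accounting ($N=M+O(1)$ with $M$ dominating the rest of the graph), $\dmg(G)\ge N$ would hold exactly when the robber can enter the reward gadget with a sufficient lead. That is a positional question, so a correct reduction of this shape would give ${\sf P}={\sf PSPACE}$.

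So the obstacle you flagged, calibrating the reward, is not the real one; long paths of about $N$ vertices, as in the paper, handle that. The missing idea is that the only memory in the game is the damaged set itself. Your mention of Generalized Geography, whose hardness comes from the no-revisit rule, points this way, but your concrete plan discards that memory. The hardness must be encoded in \emph{which} vertices are already damaged, with $N$ calibrated exactly rather than swamped by a huge reward.

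The paper does this by reducing directly from 3-QBF:
\begin{itemize}
\item The robber walks through the layers $Y_i^j$ and records the assignment by damaging either $\{y_i^j,z_i^j\}$ or $\{\overline{y}_i^j,\overline{z}_i^j\}$.
\item The cop's route through the clique $X$ (choosing $x_i^m$ or $\overline{x}_i^m$) encodes the other quantifier's choices. Her moves are forced by paths on $N-1$ vertices hanging off each layer, which act only as threats and are never entered under optimal play.
\item With $N=2nm+4$, the robber wins iff his final step from $c_j'$ lands on a previously \emph{undamaged} $z$-vertex, i.e.\ iff the clause $C_j$ chosen by the cop is satisfied.
\end{itemize}
As a side remark, subdividing edges does not preserve Cops and Robbers play in general: a subdivided triangle is $C_6$, on which one cop no longer wins.
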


The remainder of the paper is organized as follows: Section~\ref{s:prelim} contains some preliminary observations; in Section~\ref{s:results}, we prove our theorems on the damage number of certain graph classes; Section~\ref{s:complexity} contains the proof of Theorem~\ref{complexity}; and we end the paper with some concluding remarks and open problems in Section~\ref{s:conclusion}.

\section{Preliminaries}\label{s:prelim}

For a formal definition of the problem, let $G$ be a graph. To begin the game, the cop chooses an initial vertex to occupy, and then the robber chooses his initial vertex. Each round consists of a move of the cop followed by a move of the robber, where each of them has the opportunity to (but is not required to) move to a neighboring vertex. If the cop ever occupies the same vertex as the robber, the robber is captured and the game ends. The information about the locations and movements of both the cop and the robber is publicly available, i.e., this is a perfect information game.

Now we add the concept of damage to the setup. If a vertex $v$ is occupied by the robber \emph{at the end} of a round and the robber is not caught in the \emph{following} round, then $v$ becomes damaged and stays damaged for the rest of the game. The cop's goal is to minimize the number of damaged vertices, while the robber wants to damage as many vertices as possible. The \emph{damage number} of G, denoted $\text{dmg}(G)$, is the number of damaged vertices when both the cop and the robber play optimally.

We add a technical remark that we will use throughout the paper. Note that the possibility of the robber staying put at any point of the game can be excluded, as the cop can respond by also staying put until the robber moves, clearly allowing no extra damage by doing so. Since the robber's aim is not survival but rather damaging as many vertices as possible, this approach by the cop will not harm the cop's goal of minimizing the number of damaged vertices. Hence, w.l.o.g., from now on we require the robber to change position in every round.

We now gather a couple of lemmas that will be used as tools in the proofs in later sections. The first one is the standard Chernoff bound, see, e.g.,~\cite{AS}.

\begin{lem}[Chernoff bound] \label{lem:ch-bound}
    Let $X$ be a sum of independent indicator random variables, let $\mu:=\Ex[X]>0$, and let $0<f<1$. Then $\Prob[X\le (1-f)\mu] \le \exp\left( -\frac{f^2\mu}{2} \right)$. 
\end{lem}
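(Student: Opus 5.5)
This is the standard multiplicative Chernoff lower-tail bound, which the paper cites from Alon--Spencer. I would nonetheless prove it by the usual exponential-moment method. Write $X=\sum_i X_i$, where the $X_i$ are independent indicators with $\Prob[X_i=1]=p_i$, so that $\mu=\sum_i p_i$.

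\emph{Markov step.} For a parameter $t>0$, apply Markov's inequality to $e^{-tX}$:
\[\Prob[X\le (1-f)\mu]=\Prob\bigl[e^{-tX}\ge e^{-t(1-f)\mu}\bigr]\le e^{t(1-f)\mu}\,\Ex\bigl[e^{-tX}\bigr].\]

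\emph{Moment generating function.} By independence,
\[\Ex\bigl[e^{-tX}\bigr]=\prod_i\bigl(1-p_i(1-e^{-t})\bigr).\]
Using $1-x\le e^{-x}$, each factor is at most $\exp\bigl(-p_i(1-e^{-t})\bigr)$, so
\[\Ex\bigl[e^{-tX}\bigr]\le \exp\bigl(-\mu(1-e^{-t})\bigr).\]

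\emph{Choice of $t$.} Take $t=-\ln(1-f)>0$, which is valid since $0<f<1$. Then $e^{-t}=1-f$, and combining the two steps gives
\[\Prob[X\le(1-f)\mu]\le \exp\Bigl(-\mu\bigl[f+(1-f)\ln(1-f)\bigr]\Bigr).\]

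\emph{Elementary inequality.} It remains to show $g(f):=f+(1-f)\ln(1-f)-f^2/2\ge 0$ on $[0,1)$. We have $g(0)=0$ and
\[g'(f)=1-\ln(1-f)-1-f=-\ln(1-f)-f.\]
Since $-\ln(1-f)=f+f^2/2+\dots\ge f$, we get $g'(f)\ge 0$. Hence $g$ is nondecreasing and $g\ge 0$ on $[0,1)$. Substituting this into the previous bound yields
\[\Prob[X\le(1-f)\mu]\le\exp\bigl(-f^2\mu/2\bigr),\]
as claimed.

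No step is a real obstacle. The only point requiring care is the final calculus inequality, and there the series $-\ln(1-f)=\sum_{k\ge1}f^k/k$ makes the sign of $g'$ immediate.
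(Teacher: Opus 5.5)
Your proof is correct and complete; the paper gives no proof of this lemma at all but simply cites it from Alon--Spencer, and your exponential-moment argument is the standard proof behind that citation. The textbook version usually takes $t=f$ and uses $e^{-t}\le 1-t+t^2/2$ instead of optimizing $t$. That avoids your final calculus step, but it yields exactly the same bound $\exp(-f^2\mu/2)$.
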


The next observation is trivial as if the robber moves closer to a vertex $z$ that is already closer to his position than to the cop's, then after his move, he will be at least two steps closer to $z$ than the cop and therefore cannot be captured by the cop.

\begin{obs}\label{robbercloser}
    Suppose that immediately prior to a robber move, the robber occupies vertex $u$ and the cop occupies vertex $v$.  If for some vertex $z$ we have $d_G(u,z)<d_G(v,z)$, then the robber can damage $z$.
\end{obs}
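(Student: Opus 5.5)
The statement is Observation~\ref{robbercloser}. The plan is to use the distance hypothesis as a potential that the robber maintains along a shortest path to $z$. Let $d_0 = d_G(u,z)$ and fix a shortest path $u = w_0, w_1, \dots, w_{d_0} = z$. The robber's strategy is simply to walk along this path, one step per robber move.

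We track the pair (robber distance to $z$, cop distance to $z$) just before each robber move. Initially the robber is at $w_0$ with $d_G(w_0,z) = d_0$, and the cop is at distance at least $d_0+1$ from $z$.

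The invariant, just before the robber's move from $w_i$, is that the cop is at distance at least $d_0-i+1$ from $z$, while $d_G(w_i,z) = d_0-i$. After the robber moves to $w_{i+1}$, he is at distance $d_0-i-1$ from $z$ while the cop is still at distance at least $d_0-i+1$. The cop then moves once and her distance drops by at most one, so she is at distance at least $d_0-i$ from $z$. This restores the invariant for index $i+1$.

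It remains to show the robber is never caught while on the path. At the moment the cop moves, the robber sits at $w_{i+1}$ and the cop is at distance at least $d_0-i+1$ from $z$. By the triangle inequality, the cop's distance to $w_{i+1}$ is at least
\[(d_0-i+1)-(d_0-i-1) = 2.\]
So after one cop move her distance to the robber is at least $1$, and she cannot land on him. Hence the robber survives every round, including the round following his arrival at $w_{d_0} = z$. By the damage rule (the robber occupies $z$ at the end of a round and is not caught in the following round), $z$ becomes damaged.

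The degenerate case $d_0 = 0$, i.e.\ $u = z$ and the cop at distance at least $1$, needs separate treatment because it interacts with the convention that the robber must move each turn. Here $z$ is already the robber's current position, which was occupied at the end of the previous round without capture, so it is damaged. Alternatively we may assume $d_0 \ge 1$.

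No step here is a real obstacle. The only point requiring care is aligning the timing with the paper's definition of damage: capture must be excluded in the round \emph{after} the robber reaches $z$. The distance-$2$ gap computed above handles exactly this.
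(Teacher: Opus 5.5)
Your proof is correct and is the same argument as the paper's one-line justification: the robber walks along a shortest path toward $z$, stays strictly closer to $z$ than the cop before each of his moves, and is therefore at least two steps closer right after each move, so the cop can never land on him, including in the round after he reaches $z$. Your invariant and triangle-inequality computation make that sketch rigorous, and your handling of the degenerate case $d_G(u,z)=0$ is also fine.
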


For a graph $G$ and vertices $u,v\in V(G)$, we write $D_G(u<v)=\{w\in V(G): d_G(u,w)<d_G(v,w)\}$.  Observation \ref{robbercloser} leads to the following general lemma.

\begin{lem}\label{lem:generallower}
    For any graph $G$, we have $\dmg(G)\ge\min\{|D_G(u<v)|\}$, where the minimum is taken over all $u,v$ such that $\dist_G(u,v) \le 2$. Furthermore, if $G$ is triangle-free, then $\dmg(G)\ge\min\{|D_G(u<v)|\}$ with the minimum taken over all $u,v$ with $\dist_G(u,v) = 2$.
\end{lem}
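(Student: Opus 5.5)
We describe a robber strategy that reacts to the cop's position after the cop's first move and then applies Observation~\ref{robbercloser} to every vertex of a suitable set $D_G(u<v)$.

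Let the cop start at some vertex $c_0$. The robber picks his initial vertex $r_0$ with $d_G(r_0,c_0)\ge 2$ if possible; if no such vertex exists, $G$ has a dominating vertex and the claimed bound is trivial, since for $u,v$ adjacent to each other the set $D_G(u<v)$ still contains $u$, so the minimum is at least $1$ and the robber can damage one vertex by standing still when not adjacent, or the statement degenerates. I expect to check these small cases separately.

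In the main case the cop then moves to $c_1$ with $d_G(c_1,r_0)\ge 1$. If $d_G(c_1,r_0)\ge 3$, the robber simply moves toward $c_1$ (or stays near) until the distance is at most $2$; the cop is never adjacent after his own move without the robber having a safe response, so we may assume that at some point, immediately before a robber move, the robber is at $u$ and the cop is at $v$ with $1\le d_G(u,v)\le 2$. Here $d_G(u,v)\ge1$ holds since the robber was not captured. More carefully, I would state that at the first moment before a robber move the distance is some value $\le 2$ or the robber can always keep distance $\ge 2$; in the latter case he can move around freely, and I would argue he reaches such a configuration anyway.

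At that moment apply Observation~\ref{robbercloser}: for every $z\in D_G(u<v)$ we have $d_G(u,z)<d_G(v,z)$, so the robber can damage $z$. Since the robber only needs to damage the vertices of one set, he can commit to a single target, which gives at least one vertex. To collect all of $D_G(u<v)$ I would strengthen the observation into a strategy. The robber walks along a shortest path toward $z$. After each round his distance to $z$ drops by one while the cop's drops by at most one, so the strict inequality persists. I would have him visit targets in order, which is the delicate point: after reaching $z$ the invariant must hold for the next target. The obstacle is therefore to show that the set of still-undamaged vertices that are closer to the robber than to the cop does not lose elements except by becoming damaged.

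The fix I would try is that the robber maintains an invariant: the undamaged vertices of $D$ are closer to him than to the cop. As written in the paper, though, the lemma intends damage counted as $|D_G(u<v)|$ with the cop choosing $v$ adversarially, so the simplest honest route is to argue that the robber's position $u$ is reachable against any cop position $v$ at distance $\le 2$.

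For the triangle-free part, note that if $d_G(u,v)=1$ the robber, being adjacent and moving, can step to a neighbor $u'$ of $u$ other than $v$. Triangle-freeness forces $d_G(u',v)=2$, and the configuration reduces to distance $2$, so only distance-$2$ pairs matter. The main obstacle remains the sequential-damage invariant. I would try the potential $d_G(v,z)-d_G(u,z)\ge1$ and a greedy nearest-target ordering.
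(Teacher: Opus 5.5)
\paragraph{The gap.} The step you flag yourself is a genuine gap, and it cannot be closed. From a single robber-to-move configuration (robber at $u$, cop at $v$, $d_G(u,v)\le 2$), the robber in general \emph{cannot} damage all of $D_G(u<v)$, so no potential function or greedy ordering will give your sequential invariant.

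For example, let $u$ be the common endpoint of two long paths $P_1,P_2$ that share only $u$, and join $u$ to $v$ through a path $u\,m\,v$. Then $D_G(u<v)=\{u\}\cup V(P_1)\cup V(P_2)$. But the cop simply walks to $m$ and then to $u$. After that the robber, if still free, is confined to one of the two paths, and the other path is never damaged. Observation~\ref{robbercloser} gives you each $z\in D_G(u<v)$ \emph{separately}, not all of them in one play.

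Two other steps are only asserted. Nothing shows that the robber can steer into a configuration with $d_G(u,v)\le 2$. Your triangle-free reduction also has a timing error: after the robber steps from $u$ to $u'$ it is the cop's turn. So $(u',v)$ is not a configuration immediately before a robber move, and Observation~\ref{robbercloser} does not apply to it.

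\paragraph{The missing idea.} The bound should count damage \emph{already done}, via a saturation argument; this is how the paper proceeds. Fix optimal strategies and look at a robber-to-move moment after $\dmg(G)$ vertices have been damaged. From then on, the cop's strategy permits no new damage, whatever the robber does. Hence in any later robber-to-move configuration $(u,v)$, every $z\in D_G(u<v)$ is already damaged. Otherwise the robber could deviate and damage that single $z$ by Observation~\ref{robbercloser}. Each $z$ is a separate hypothetical deviation, so no invariant across targets is needed.

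The configuration with $d_G(u,v)\le 2$ then comes for free. Take an undamaged $w$ (if none exists, $\dmg(G)=|V(G)|$) and let the robber walk along a shortest path to $w$. Since he cannot damage $w$, at some moment the next vertex $u'$ of the path lies in $N[v]$. Then $d_G(u,v)\le 2$ and $\dmg(G)\ge|D_G(u<v)|$. For triangle-free $G$, the paper discards $d_G(u,v)=1$ at this blocking step, since $u'\in N(u)\cap N(v)$ would close a triangle.

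\paragraph{The dominating-vertex case.} Your treatment of this case is wrong: it is not trivial, the bound fails there. If the cop starts on a dominating vertex, she captures at once and $\dmg(G)=0$. But $|D_G(u<v)|\ge 1$ whenever $u\ne v$, for example in $K_n$, or the path on three vertices in the triangle-free version. This case must be excluded rather than checked. The paper's proof also tacitly assumes the robber survives long enough to make a move.
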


\begin{proof}
    Suppose both players play optimally, and consider a point in the game after the robber has damaged $\dmg(G)$ vertices. Let $w$ be an undamaged vertex. As $\dmg(G)$ vertices have already been damaged, the cop must be able to stop the robber from damaging $w$. Suppose the robber starts moving towards $w$ on a shortest path from their current position. The only thing that can prevent the robber from reaching and damaging $w$ is that at some moment, the robber is at vertex $u$ and wants to move to vertex $u'$, but $u'$ is adjacent to the cop's current position $v$. But then $u$ and $v$ are at distance at most 2. By Observation~\ref{robbercloser}, the robber can damage any vertex in $D_G(u<v)$ and, as no further vertices can be damaged, all vertices in $D_G(u<v)$ must already be damaged; hence $\dmg(G) \ge |D_G(u<v)|$. Additionally, observe that that if $d_G(u,v)=1$, then $u,u'$, and $v$ form a triangle, so if $G$ is triangle-free, we need only consider the case where $\dist_G(u,v) = 2$.
\end{proof}

\smallskip

\noindent \textbf{Notation.} For any vertex $x$, $N(x)$ and $N[x]$ denote the open and closed neighborhoods of $x$, respectively. For any sequence $x_1,x_2,\dots,x_k$ of vertices, we write 
\[ N(\bar{x}_1,\bar{x}_2,\dots,\bar{x}_s,x_{s+1},\dots,x_k)=\bigcap_{i=1}^s\Big(V(G)\setminus N[x]\Big)\cap \bigcap_{i=s+1}^kN(x_i). \]

\section{Results for specific graph classes} \label{s:results}

In this section, we prove our results on $\dmg(G)$ for different graph classes. For the reader's convenience we will restate all our theorems before their proof. 
\subsection{Cartesian products of cliques}

\begingroup
\renewcommand{\thetheorem}{\ref{thm:hamming}} 
\begin{theorem}
If $G = K_{n_1} \cart K_{n_2} \cart \dots \cart K_{n_d}$, where $2 \le n_1 \le n_2 \le \dots \le n_d$, then 
\[\dmg(G) \le \begin{cases}
\displaystyle\frac{\size{V(G)}}{4} + \sum_{i=2}^{d} \prod_{j=3}^i n_j, \,\, &\text{if }n_1 = n_2 = 2\text{ and $n_d \le 3$};\\[4ex]
\displaystyle\frac{\size{V(G)}}{n_d} + \sum_{i=0}^{d-2} \prod_{j=1}^i n_j, \,\, &\text{otherwise,}\end{cases}\]
and
\[\dmg(G) \ge \begin{cases}
\displaystyle\frac{\size{V(G)}}{4}, \,\, &\text{if }n_1 = n_2 = 2;\\[3ex]
\displaystyle\frac{\size{V(G)}}{n_d}, \,\, &\text{otherwise.} \end{cases} \]
\end{theorem}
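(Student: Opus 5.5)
We prove the lower and upper bounds separately.

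\textbf{Lower bound.} We use Lemma~\ref{lem:generallower}. Fix $u,v$ with $\dist_G(u,v)\le 2$ and estimate $|D_G(u<v)|$. In a Hamming graph the distance between two vertices is the number of coordinates in which they differ.

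\emph{Case $\dist_G(u,v)=1$.} Say $u,v$ differ only in coordinate $i$. A vertex $w$ lies in $D_G(u<v)$ exactly when $w_i=u_i$. So $|D_G(u<v)|=|V(G)|/n_i\ge |V(G)|/n_d$.

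\emph{Case $\dist_G(u,v)=2$.} Say $u,v$ differ in coordinates $i,j$. A vertex $w$ lies in $D_G(u<v)$ when it agrees with $u$ in both coordinates $i,j$. It also lies there when it agrees with $u$ in one of them and differs from $v$ in the other. Counting, the proportion of such $w$ is
\[ \frac{1}{n_in_j}+\frac{1}{n_i}\cdot\frac{n_j-2}{n_j}+\frac{1}{n_j}\cdot\frac{n_i-2}{n_i}=\frac{n_i+n_j-3}{n_in_j}. \]
If $n_i=n_j=2$ this equals $1/4$. Otherwise we check that $(n_i+n_j-3)/(n_in_j)\ge 1/n_d$, using $n_d\ge\max(n_i,n_j)$ and $n_i+n_j-3\ge \min(n_i,n_j)$. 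Hence the minimum over all such pairs is $|V(G)|/4$ when $n_1=n_2=2$, and $|V(G)|/n_d$ otherwise. We should double-check the $n_1=n_2=2$ case: then $|V(G)|/n_d$ might be smaller than $|V(G)|/4$ when $n_d\ge 5$. If so, the stated lower bound in that case must come from a direct robber strategy rather than from the minimum in Lemma~\ref{lem:generallower}. I would first re-examine whether the distance-one case can be excluded there by a refinement of the lemma.

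\textbf{Upper bound.} We give a cop strategy by induction on $d$. The cop aims to confine the robber to the layer $\{w: w_d=c\}$ for some value $c$, which has $|V(G)|/n_d$ vertices. This costs the additive error term: damage done before the cop reaches the right position.

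The natural plan is to play recursively. The cop first plays the strategy for $G'=K_{n_1}\cart\cdots\cart K_{n_{d-1}}$ on the projection, i.e.\ the cop copies the robber's projected moves, in order to capture the robber's shadow. Once the shadows coincide, the cop and robber differ only in coordinate $d$. Each robber move then either changes coordinate $d$, and the cop captures or mirrors, or changes another coordinate, and the cop follows. So the robber is stuck in a $K_{n_d}$-fibre structure. A cleaner way to say this: the cop matches the robber on all coordinates but one.

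The sum $\sum_{i=0}^{d-2}\prod_{j\le i}n_j$ suggests this accounting. At stage $i$ the cop has synchronized coordinates $1,\dots,i$. The robber has meanwhile damaged at most one vertex per round, over about $\prod_{j\le i} n_j$ rounds, or per distinct projected position. The cop synchronizes one coordinate per move, since in a clique it can jump directly to the needed value.

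For the special case $n_1=n_2=2$, $n_d\le 3$, the cop instead traps the robber in a $2\times 2$ block on coordinates $1,2$. This gives $|V(G)|/4$, with error terms indexed from $3$.

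\textbf{Main obstacle.} The hard step is proving that once the cop matches the robber in all but one coordinate, the robber can never damage outside the final layer and cannot waste the cop's progress. This requires showing that the cop's synchronized coordinates stay synchronized after every robber move. Here the clique structure (any value reachable in one step) is essential. The bookkeeping of damage incurred before synchronization then yields the stated sums.
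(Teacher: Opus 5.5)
\textbf{Lower bound.} Your lower-bound computation matches the paper's, and your worry about the case $n_1=n_2=2$ is justified. Neither a refinement of Lemma~\ref{lem:generallower} nor a direct robber strategy can rescue it, because the stated bound is false there once $n_d$ is large. Since $G$ has triangles whenever $n_d\ge 3$, distance-one pairs cannot be discarded. A pair differing only in coordinate $d$ gives $|D_G(u<v)|=|V(G)|/n_d$, so for $n_1=n_2=2$ the lemma yields exactly $|V(G)|/\max\{4,n_d\}$. Concretely, take $G=K_2\cart K_2\cart K_n$ with $n\ge 8$. The theorem's own upper bound (the second case, since $n_d>3$) gives $\dmg(G)\le 4+1+2=7<n=|V(G)|/4$. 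So the first case of the lower bound needs the extra hypothesis $n_d\le 4$; equivalently, $|V(G)|/4$ should be replaced by $|V(G)|/\max\{4,n_d\}$. The paper's proof has the same oversight: its distance-one case gives only $|V(G)|/n_d$, and it never takes the minimum with the distance-two case.

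\textbf{Upper bound.} This is where your proposal has a genuine gap. The strategy you gesture at is the paper's: the cop always changes the lowest-indexed coordinate in which she disagrees with the robber. However, your accounting does not work. The robber can break an already-synchronized coordinate on every move (say, by toggling coordinate $1$). So agreement is preserved only after each \emph{cop} move, the cop does not gain a coordinate per move, and a stage can last arbitrarily many rounds. Hence ``one vertex per round'' bounds nothing. A plain induction via the shadow game on $K_{n_1}\cart\cdots\cart K_{n_{d-1}}$ does not bound vertices of $G$ either, since one shadow position lifts to $n_d$ vertices.

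The missing idea is that the robber himself ends a stage whenever he touches a high coordinate. Let $t_i$ be the first round after whose cop move the players agree in coordinates $1,\dots,i$. Suppose that in a round $r$ with $t_{i-1}\le r<t_i$ the robber changes any coordinate $\ge i$. Then the cop's next move yields agreement in $1,\dots,i$, so $t_i=r+1$, and the vertex he just entered is damaged no earlier than round $t_i$. Hence all vertices first damaged in rounds $t_{i-1},\dots,t_i-1$ agree in coordinates $i,\dots,d$, so there are at most $\prod_{j<i}n_j$ of them. Summing over $i=1,\dots,d$ gives exactly $|V(G)|/n_d+\sum_{i=0}^{d-2}\prod_{j\le i}n_j$. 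Note also that once the cop agrees in coordinates $1,\dots,d-1$ the players are adjacent, and what confines the robber is a layer $\{w:w_d=c\}$, not a $K_{n_d}$-fibre.

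\textbf{Special case.} Here the cop synchronizes only coordinates $3,\dots,d$, and a robber move in coordinate $1$ or $2$ also ends a stage. Be careful, though: agreement on $3,\dots,d$ may be reached while the players differ in only one of coordinates $1,2$. Then the robber is adjacent to the cop, yet he can roam his layer while she shadows him in coordinates $\ge 3$, and later flip the other of coordinates $1,2$ once. So with this strategy the final phase can cost two layers of size $|V(G)|/4$, not one. The last term $\prod_{j=3}^{d}n_j=|V(G)|/4$ of the stated sum is exactly what accommodates this.
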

\addtocounter{theorem}{-1} 
\endgroup

\begin{proof}
We view the vertex set of $G$ as the set of ordered $d$-tuples in which the $i$th coordinate belongs to $\{0, \dots, n_i-1\}$.  We say that the cop and robber \textit{agree} in a coordinate if their positions have the same value in that coordinate; otherwise, they \textit{disagree}.

\medskip

For the upper bound, we first give a strategy for the cop to prevent the robber from damaging more than $\frac{\size{V(G)}}{n_d} + \sum_{i=0}^{d-2} \prod_{j=1}^i n_j$ vertices no matter what values the $n_i$ take; later, we will explain how the strategy can be improved when $n_1 = n_2 = 2$ and $n_d\le 3$.  The cop starts on the vertex whose coordinates are all 0.  Henceforth, on each turn, the cop finds the lowest-indexed coordinate in which the players disagree, and she changes that coordinate to agree with the robber.  Note that once the cop and robber agree in coordinates $1, \dots, k$, they will continue to agree after every cop turn: if the robber changes one of these coordinates, then the cop will do the same in response.  Additionally, once the players agree in all $d$ coordinates, the cop has captured the robber and the game ends.  For convenience, we may assume without loss of generality that the cop eventually captures the robber: for example, once the robber has damaged all vertices he can, we may suppose that he remains in place until the cop captures him.

\medskip

For $i \in \{0, \dots, d\}$, let $t_i$ denote the first round in which, after the cop's turn, the cop and robber agree in coordinates $1, \dots, i$ (where we view ``round 0'' as the players' initial placement).  For $i \in \{1, \dots, d\}$, let $S_i$ denote the set of previously-undamaged vertices that the robber damages from round $t_{i-1}$ up through round $t_i-1$.  Note that each vertex in $S_i$ must have the same values for coordinates $i, i+1, i+2, \dots, d$, since if the robber changed one of these coordinates in round $k$, then the cop would move to agree in coordinate $i$ in round $k+1$; thus we would have $t_{i} = k+1$, so the robber's new vertex would not be damaged until round $t_i$ (if indeed the robber survived long enough to damage it at all).  Thus, $\size{S_i} \le \prod_{j=1}^{i-1} n_j$.  It follows that 
\begin{align*}
\dmg(G) &= \sum_{i=1}^d \size{S_i} \le \sum_{i=1}^d \prod_{j=1}^{i-1} n_j  \\
&= \prod_{j=1}^{d-1} n_j + \sum_{i=1}^{d-1} \prod_{j=1}^{i-1} n_j= \prod_{j=1}^{d-1} n_j + \sum_{i=0}^{d-2} \prod_{j=1}^{i} n_j \\ 
&= \frac{\size{V(G)}}{n_d} + \sum_{i=0}^{d-2} \prod_{j=1}^{i} n_j,
\end{align*}
as claimed.  

\medskip

We now explain how the cop can do slightly better if $n_1 = n_2 = 2$ and $n_d \le 3$.  In this case, the cop plays similarly; however, instead of changing the lowest-indexed coordinate in which the two plays disagree, the cop restricts her attention to coordinates $3, \dots, d$ and changes the first of these coordinates in which the players disagree.  Once the cop agrees with the robber in coordinates $3, \dots, d$, the robber can never again change coordinates 1 or 2 without immediately being captured.  An analysis similar to the one used above now shows that 
\[\dmg(G) \le \frac{\size{V(G)}}{4} + \sum_{i=2}^{d} \prod_{j=3}^i n_j.\]

\medskip

For the lower bound, by Lemma \ref{lem:generallower} we have $\dmg(G)\ge\min\{|D_G(u<v)|\}$, where the minimum is taken over all vertices $u$ and $v$ with $\dist_G(u,v) \le 2$.  Let $u$ and $v$ be any two vertices satisfying $\dist_G(u,v) \le 2$; we will bound $\size{D_G(u<v)}$.

\medskip

Suppose first that $\dist_G(u,v) = 1$.  This means that $u$ and $v$ differ in exactly one coordinate, say coordinate $k$.  A given vertex $w$ is strictly closer to $u$ than to $v$ if and only if $w$ agrees with $u$ in coordinate $k$; the number of such vertices is clearly $\prod_{j \not = k} n_j$, which is at least $\prod_{j=1}^{d-1} n_j$, i.e. $\size{V(G)}/n_d$.  

\medskip

Now suppose instead that $\dist(u,v) = 2$.  Vertices $u$ and $v$ must differ in exactly two coordinates, say coordinates $k$ and $\ell$.  This time, a vertex $w$ is closer to $u$ than to $v$ if and only if either:
\begin{itemize}
    \item $w$ agrees with $u$ in both coordinate $k$ and coordinate $\ell$; or
    \item $w$ agrees with $u$ in exactly one of these coordinates and does not agree with $v$ in the other.
\end{itemize}
The number of vertices of the first type is $\prod_{j\not \in \{k,\ell\}} n_j$.
To count vertices of the second type, note that such a vertex either agrees with $u$ in coordinate $k$ but does not agree with either $u$ or $v$ in coordinate $\ell$, or agrees with $u$ in coordinate $\ell$ but does not agree with either $u$ or $v$ in coordinate $k$.  Thus, the number of vertices of the second type is given by
\[(n_{\ell}-2) \prod_{j\not \in \{k,\ell\}} n_j + (n_k-2) \prod_{j\not \in \{k,\ell\}} n_j.\]
This simplifies to $(n_{\ell}+n_k-4) \prod_{j\not \in \{k,\ell\}} n_j$. 
In total, we have 
\begin{align*}
\size{S} &\ge \prod_{j\not \in \{k,\ell\}} n_j + (n_{\ell}+n_k-4)\prod_{j\not \in \{k,\ell\}} n_j\\ 
        &= (n_k + n_{\ell} - 3) \prod_{j\not \in \{k,\ell\}} n_j\\
        &= (n_k + n_{\ell} - 3) \cdot \frac{\size{V(G)}}{n_k \cdot n_{\ell}}\\ &= \frac{\size{V(G)}}{n_{\ell}} + \frac{\size{V(G)}}{n_k} - 3\frac{\size{V(G)}}{n_k \cdot n_{\ell}}.
\end{align*}
Suppose without loss of generality that $n_{\ell} \ge n_k$.  If $n_{\ell} \ge 3$, then we have 
\[\frac{\size{V(G)}}{n_{\ell}} + \frac{\size{V(G)}}{n_k} - 3\frac{\size{V(G)}}{n_k \cdot n_{\ell}} \ge \frac{\size{V(G)}}{n_{\ell}} \ge \frac{\size{V(G)}}{n_{d}}.\]
Otherwise we must have $n_k = n_{\ell} = 2$, so 
\[\frac{\size{V(G)}}{n_{\ell}} + \frac{\size{V(G)}}{n_k} - 3\frac{\size{V(G)}}{n_k \cdot n_{\ell}} = \frac{\size{V(G)}}{4}. \qedhere \]
\end{proof}

In the case of the hypercube -- that is, the product of complete graphs of order 2 -- the lower bound in Theorem \ref{thm:hamming} gives the exact value of the damage number, as we next show.

\begingroup
\renewcommand{\thetheorem}{\ref{hypercube}} 
\begin{theorem}
For all $d \ge 2$, we have $\dmg(Q_d) = 2^{d-2}$.
\end{theorem}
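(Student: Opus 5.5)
The lower bound should follow at once from Theorem~\ref{thm:hamming}. Indeed $Q_d=K_2\cart\cdots\cart K_2$, so $n_1=n_2=2$, and the lower bound there gives $\dmg(Q_d)\ge |V(Q_d)|/4=2^{d-2}$. All of the work is in the upper bound. There, Theorem~\ref{thm:hamming} only gives $2^{d-2}+(d-1)$, so I will design a sharper cop strategy that removes the additive error term.

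\textbf{Step 1 (confinement).} Write $D$ for the set of coordinates in which cop and robber disagree immediately after a cop move. Suppose the cop reaches a position with $|D|=2$, say $D=\{a,b\}$.
\begin{itemize}
\item If the robber flips coordinate $a$ or $b$, he becomes adjacent to the cop and is caught, so his new vertex is never damaged.
\item He must therefore flip some $c\notin\{a,b\}$, and the cop copies that flip, restoring $D=\{a,b\}$.
\end{itemize}
Hence from this moment on, every vertex the robber damages lies in the subcube $\{x: x_a=r_a,\ x_b=r_b\}$, which has $2^{d-2}$ vertices. The same mirroring works if $|D|=1$ after a cop move: the robber is then confined to a subcube of codimension $1$, but at distance $1$ he can only step to vertices adjacent to the cop, so he damages essentially nothing further.

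\textbf{Step 2 (the approach phase).} Since $Q_d$ is bipartite, $|D|$ after each cop move has fixed parity. Moreover it never increases if the cop always flips a coordinate in $D$: the robber changes $|D|$ by $\pm1$ and the cop then reduces it by $1$. So $|D|$ eventually reaches $2$ or $1$, provided the cop picks the coordinate to fix so that $D$ genuinely shrinks. To make this happen, I would fix coordinates in a prescribed order, as in the proof of Theorem~\ref{thm:hamming}. The cop fixes all coordinates except two chosen in advance, say $1$ and $2$, so the eventual $D$ is contained in $\{1,2\}$.

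\textbf{Step 3 (the main obstacle).} The difficulty is to charge the vertices damaged during the approach phase to the same budget $2^{d-2}$, so that the total is not $2^{d-2}$ plus something. My plan is to show that every vertex $w$ the robber damages, whether before or after confinement, has $(w_1,w_2)$ equal to the robber's final value $(r_1,r_2)$ on coordinates $1,2$.

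To get this, I would exploit the cop's choice of starting vertex and her freedom to fix coordinates $1,2$ first or last. While the robber has not yet been pinned, the cop first matches him in coordinates $1,2$ whenever possible. This should mean that any flip of coordinate $1$ or $2$ by the robber is answered immediately by the cop, so that the new vertex is either never damaged or the flip is undone. I would then run the analysis of Theorem~\ref{thm:hamming}: define stages $t_i$ and sets $S_i$, and show each later $S_i$ lies inside the final subcube $\{x_1=r_1,x_2=r_2\}$ rather than merely being small.

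If this direct charging fails, I would first try a refined count. This uses the parity observation together with Lemma~\ref{lem:generallower}-style distance arguments: the robber's early positions are at distance at least $3$ from the cop, and this forces them into the region $D_{Q_d}(u<v)$ for the eventual pinned pair $u,v$. One then checks that this region has size exactly $2^{d-2}$.
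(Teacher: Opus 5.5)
Your lower bound is the paper's, and your Step~1 analysis of a pinned robber with $|D|=2$ is correct. The upper bound, however, has a genuine gap: Step~3 is a plan rather than an argument, and the strategy of Step~2 cannot support it.

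With a prescribed order, $D$ is not monotone. When the robber flips a coordinate on which the players agree, your cop may fix a \emph{different} coordinate, so coordinates enter and leave $D$ and early damage escapes the final subcube. Concretely, in $Q_4$ let the cop sit at $0000$ and the robber start at $1111$. Even granting the cop a pass on her first turn, play can go as follows:
\begin{enumerate}
\item The robber moves to $0111$, and the cop fixes coordinate $3$ (to $0010$).
\item The robber returns to $1111$, and the cop fixes coordinate $4$ (to $0011$).
\end{enumerate}
Now $D=\{1,2\}$, but $0111$ is already damaged and lies outside $\{x_1=x_2=1\}$. Flipping coordinates $3,4,3$ then damages the rest of that subcube, for $5>2^{d-2}$ damaged vertices.

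You also misread Theorem~\ref{thm:hamming}: for $Q_d$ it gives $2^{d-2}+2^{d-1}-1$, not $2^{d-2}+(d-1)$, since the products $\prod_{j=3}^i n_j$ equal $2^{i-2}$. So the loss of a fixed-order strategy is not a lower-order term that refined counting can absorb. Your alternative of matching coordinates $1,2$ first contradicts Step~2. The $D_{Q_d}(u<v)$ fallback merely restates the goal, because for $u,v$ at distance $2$ that region is exactly the final subcube, and the example shows the goal fails for your strategy.

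The missing idea is to mirror throughout the whole game, not only once $|D|=2$. If the robber flips a coordinate on which the players agree, the cop flips that same coordinate; only when he flips a coordinate of $D$ does she flip another coordinate of $D$. Then the sets $D$ after cop turns form a decreasing chain. Any coordinate the robber flips while it is in $D$ leaves $D$ for good, so every robber position agrees with his start on the final $D$.

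You also need a parity fix, which you omit and which Step~1 gets wrong. Since $Q_d$ is bipartite, a robber adjacent to the cop can step to any other neighbour, which is at distance $2$ from her. So with $|D|=1$ he roams $\{x_a=r_a\}$ under your mirroring and damages up to $2^{d-1}$ vertices. The fix is for the cop, on her first turn, to step toward the robber if their distance is odd and stay put if it is even. Then $|D|$ is even after every cop turn, and one of two things happens:
\begin{enumerate}
\item The final $D$ has at least two coordinates, so at most $2^{d-2}$ vertices are damaged.
\item The robber is caught right after flipping one of the last two coordinates of $D$. That vertex is never damaged, and all earlier positions agree with his start on both coordinates.
\end{enumerate}
This is the paper's proof, and it needs no staging or charging.
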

\addtocounter{theorem}{-1} 
\endgroup

\begin{proof}
As usual, we view the vertex set of $Q_d$ as the set of ordered $d$-tuples in which each coordinate is either 0 or 1, with two vertices adjacent if and only if the corresponding $d$-tuples differ in exactly one coordinate.  We aim to prove the inequalities $\dmg(Q_d) \ge 2^{d-2}$ and $\dmg(Q_n) \le 2^{d-2}$.  

\medskip

The lower bound follows from Theorem \ref{thm:hamming}.  For the upper bound, we must give a strategy for the cop to ensure that the robber can damage no more than $2^{d-2}$ vertices.  On the cop's first turn, if the cop is at odd distance from the robber, then she moves toward the robber; otherwise, she remains on her current vertex.

\medskip

Throughout the remainder of the game, the cop plays as follows.  Suppose that on the preceding robber turn, the robber changed some coordinate $\ell$.  If, prior to the robber's move, the cop and robber had the same value for coordinate $\ell$, then the cop changes that coordinate in response.  Otherwise, the cop changes any coordinate in which she and the robber currently disagree.  Note that if both players agree in some coordinate at any point in the game, then the cop's strategy ensures that the players will agree in that coordinate after every subsequent cop turn for the remainder of the game.  Moreover, if the cop and robber disagree in only two coordinates and the robber changes one of these two, then the cop will change the second and capture the robber.  

\medskip

If the cop and robber initially disagree on one or fewer coordinates then the robber is captured before he can damage any vertices, so suppose otherwise.  If there are at least three coordinates in which the cop and robber disagree throughout the duration of the game, then it must be that the robber never changes those three coordinates and, consequently, cannot visit more than $2^{d-3}$ different vertices.  Finally, suppose that eventually, after each cop turn, the cop and robber disagree in two or fewer coodinates. 
Suppose by symmetry that the final two coordinates in which the cop and robber disagree (after a cop turn) are coordinates 1 and 2, and consider any point in the game when the players agree in all other coordinates.  The robber cannot yet have changed coordinates 1 or 2, and if he ever changes one, then the cop will change the other and capture him.  Hence, every damaged vertex must have the same values for those two coordinates, so the number of damaged vertices is at most $2^{d-2}$, as claimed.
\end{proof}

\subsection{Graphs defined via set families}
Recall that the middle graph $M_k$ is the bipartite graph with parts $\binom{[2k+1]}{k}$ and $\binom{[2k+1]}{k+1}$ and $F\in \binom{[2k+1]}{k},G\in \binom{[2k+1]}{k+1}$ are joined by an edge if and only if $F\subseteq G$.

\begingroup
\renewcommand{\thetheorem}{\ref{middle}} 
\begin{theorem}
For any $k\ge 2$, we have $\dmg(M_k)=2\binom{2k-1}{k-1}$.
\end{theorem}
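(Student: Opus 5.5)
We prove the two inequalities separately: the lower bound via Lemma~\ref{lem:generallower}, and the upper bound by a cop strategy modeled on the proof of Theorem~\ref{hypercube}.

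\textbf{Lower bound.} Since $M_k$ is bipartite and hence triangle-free, Lemma~\ref{lem:generallower} reduces the task to showing $|D_{M_k}(u<v)|\ge 2\binom{2k-1}{k-1}$ whenever $\dist(u,v)=2$. First I record the distances in $M_k$. For $k$-sets $F,F'$ we have $\dist(F,F')=2|F\setminus F'|$. For a $k$-set $F$ and a $(k+1)$-set $G$ we have $\dist(F,G)=2|F\setminus G|+1$. Distances between $(k+1)$-sets are handled by complementation, which is an automorphism of $M_k$ swapping the two layers. So it suffices to treat $u=A\cup\{a\}$ and $v=A\cup\{b\}$ with $|A|=k-1$.
\begin{itemize}
\item A $k$-set $F$ is strictly closer to $u$ than to $v$ iff $|F\setminus u|<|F\setminus v|$, i.e.\ iff $a\in F$ and $b\notin F$. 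There are $\binom{2k-1}{k-1}$ such sets.
\item A $(k+1)$-set $G$ is strictly closer to $u$ iff $|u\setminus G|<|v\setminus G|$, i.e.\ again iff $a\in G$ and $b\notin G$. There are $\binom{2k-1}{k}=\binom{2k-1}{k-1}$ such sets.
\end{itemize}
Summing gives exactly $2\binom{2k-1}{k-1}$, so $\dmg(M_k)\ge 2\binom{2k-1}{k-1}$.

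\textbf{Upper bound: the endgame.} The key observation concerns a cop at $v=A\cup\{b\}$ facing a robber at $u=A\cup\{a\}$. Suppose that whenever the robber adds or removes an element $c\notin\{a,b\}$, the cop makes the same change. Then the invariant ``cop $=$ robber with $a$ replaced by $b$'' is maintained. If the robber ever adds $b$ (moving to $A\cup\{a,b\}$), he becomes adjacent to the cop's $A\cup\{b\}$ and is captured. If he ever removes $a$, he lands on a $k$-set contained in the cop's $(k+1)$-set and is captured. Hence, once this configuration is reached, the robber is confined to sets containing $a$ and avoiding $b$, which number $2\binom{2k-1}{k-1}$.

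\textbf{Upper bound: reaching the endgame.} The difficulty is that damage is also done before the configuration is reached. Mimicking the hypercube proof, I view vertices as $0/1$-vectors in $\{0,1\}^{2k+1}$ and the players as ``agreeing'' or ``disagreeing'' in coordinates. The cop uses the following strategy.
\begin{enumerate}
\item On her first turn she fixes parity, so that she ends in the robber's layer at even distance.
\item Afterwards, if the robber toggles a coordinate in which the two currently agree, the cop toggles the same coordinate.
\item Otherwise the robber has reduced their disagreement, and the cop toggles some other disagreeing coordinate. She chooses it so as to stay on a legal layer, i.e.\ she toggles an element in the opposite direction.
\end{enumerate}
Agreements are then permanent, and the disagreement set shrinks. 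Since both players stay in the same layer, the disagreement set has size $2m$, with $m$ elements in the robber's set and not in the cop's, and $m$ the other way.

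I then argue as in Theorem~\ref{hypercube}. Let $\{a,b\}$ be the final disagreement pair, which exists since size $0$ means capture. The claim is that the robber never toggled $a$ or $b$ at any time when the resulting vertex got damaged. After the disagreement reaches $\{a,b\}$, such a toggle is punished by capture, as in the endgame paragraph. Before that, a toggle of $a$ or $b$ by the robber would create agreement in that coordinate, contradicting that $a,b$ still disagree at the end. Thus every damaged vertex contains $a$ and avoids $b$, giving $\dmg(M_k)\le 2\binom{2k-1}{k-1}$.

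\textbf{Expected obstacles.} I expect two points to need the most care.
\begin{itemize}
\item The cop's response must always be a legal move, i.e.\ stay in the correct layer. I would check this by tracking the two halves of the disagreement set separately.
\item Early damage made while the disagreement set is large is already covered by the argument, because it too avoids toggling $a$ and $b$. What must be verified is that capture in the last step really happens on the cop's move, before the offending vertex becomes damaged.
\end{itemize}
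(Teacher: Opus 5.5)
Your proposal is correct and follows the paper's proof essentially step for step. The lower bound uses Lemma~\ref{lem:generallower} on distance-2 pairs, where $D(u<v)$ is the family of sets containing $a$ and avoiding $b$. The upper bound uses the hypercube-style strategy, in which agreements are permanent and the disagreement set only shrinks.

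One small fix: the final disagreement set need not be a pair, since nothing forces it down to size $2$ (the robber can simply stop toggling disagreeing coordinates). Your argument still goes through verbatim if you take any $a$ in the robber's half and any $b$ in the cop's half of the final disagreement set. That is how the paper gets the bound $2\binom{2k+1-2m}{k-m}\le 2\binom{2k-1}{k-1}$.
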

\addtocounter{theorem}{-1} 
\endgroup

\begin{proof}
    For the lower bound, observe that $M_k$ is bipartite and so triangle-free. Also, for vertices $F,G\in V(M_k)$ at distance 2 with $\{i\}=F\setminus G,\{j\}=G\setminus F$, we have $D_G(F<G)=\{H\in V(M_k):i\in H,j\notin H\}$ and so $|D_G(F<G)|=2\binom{2k-1}{k-1}$. The statement now follows from Lemma \ref{lem:generallower}.

    \medskip
    
    The cop strategy that witnesses the upper bound is basically the same as in Theorem \ref{hypercube}, so we just sketch the proof. We write $C_i$, $R_i$ to denote the sets corresponding to the position of the cop and the robber, respectively after the $i$th move of the cop. If the two starting sets (vertices) are of the same size, then the cop stays put for their first move. Otherwise, the cop always gets closer to the robber as follows: if the robber in their previous move removed a vertex $x\in C_i\cap R_i$ from $R_i$ or added a vertex $y \in [2k+1]\setminus (R_i\cup C_i)$ to $R_i$, then the cop removes $x$ or adds $y$, respectively. If that was not the case, i.e. the robber added / removed a vertex $x\in R_i\triangle C_i$, then the cop adds / removes a vertex $x'\in R_i\triangle C_i$ with $x'\neq x$ to make $d_{M_k}(R_{i+1},C_{i+1})=d_{M_k}(R_i,C_i)-2$. Observe that with this strategy, we have $R_{i+1}\triangle C_{i+1}\subseteq R_i\triangle C_i$ and due to the first move of the cop, we always have $d_{M_k}(R_i,C_i)=|R_i \triangle C_i|$ even. Also, the elements of $R_i\triangle C_i$ had never changed before the $i$th move, so the number of damaged vertices up to this point is at most $2\binom{2k+1-|R_i\triangle C_i|}{k-\frac{1}{2}|R_i\triangle C_i|}$. Clearly, $|R_i\triangle C_i|\ge 2$ as otherwise $|R_i\triangle C_i|=0$ and the cop captures the robber, so $\dmg(M_k)\le 2\binom{2k-2}{k-1}$, as claimed. 
\end{proof}

Recall that the Johnson graph $J(n,k)$ has vertex set $\binom{[n]}{k}$ with vertices $F$ and $G$ connected by an edge if and only if $|F\cap G|=k-1$.

\begingroup
\renewcommand{\thetheorem}{\ref{johnson}} 
\begin{theorem}
For any $n> k \ge 3$ and for $k=2, n\ge 5$, we have $\dmg(J(n,k))= \binom{n-2}{k-1}$. 
\end{theorem}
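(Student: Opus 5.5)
For the lower bound we will apply Lemma~\ref{lem:generallower} and compute $|D(F<G)|$ for every pair $F,G$ at distance at most $2$ in $J(n,k)$; the distance between $F$ and $G$ is $|F\setminus G|$.

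\emph{Distance 1.} Let $F\setminus G=\{i\}$ and $G\setminus F=\{j\}$. For any $H$, we have $d(F,H)<d(G,H)$ exactly when $|H\cap F|>|H\cap G|$, which holds if and only if $i\in H$ and $j\notin H$. This gives $|D(F<G)|=\binom{n-2}{k-1}$.

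\emph{Distance 2.} Let $F\setminus G=\{i_1,i_2\}$ and $G\setminus F=\{j_1,j_2\}$. Here $H\in D(F<G)$ if and only if $|H\cap\{i_1,i_2\}|>|H\cap\{j_1,j_2\}|$. Counting by $|H\cap\{i_1,i_2,j_1,j_2\}|$ gives
\[|D(F<G)|=2\binom{n-4}{k-1}+\binom{n-4}{k-2}+2\binom{n-4}{k-3}.\]
It then remains to check the inequality
\[2\binom{n-4}{k-1}+\binom{n-4}{k-2}+2\binom{n-4}{k-3}\ge\binom{n-2}{k-1}.\]
Expanding via Pascal's rule,
\[\binom{n-2}{k-1}=\binom{n-4}{k-1}+2\binom{n-4}{k-2}+\binom{n-4}{k-3},\]
so the inequality reduces to
\[\binom{n-4}{k-1}+\binom{n-4}{k-3}\ge\binom{n-4}{k-2}.\]
This is where the hypotheses on $n$ and $k$ enter.
\begin{itemize}
\item For $k=2$ it reads $n-4\ge 1$, that is, $n\ge5$.
\item For $k\ge3$ I expect it to follow from log-concavity. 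Either one of the two outer terms dominates the middle one, or the middle term $\binom{n-4}{k-2}$ sits near the peak and the two outer terms together still exceed it. I will verify this by a short case analysis on the position of $k-2$ relative to $(n-4)/2$, checking small cases such as $n=k+1$ directly (the terms with negative or too large lower index vanish).
\end{itemize}
This is a routine, if fiddly, step. If it fails for some small parameters, I will instead bound the distance-2 configurations separately by noting that the robber cannot actually be blocked there.

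For the upper bound we adapt the "agreement" cop strategy from the proofs of Theorem~\ref{hypercube} and Theorem~\ref{middle}. We track $R_i\triangle C_i$ and have the cop maintain $R_{i+1}\triangle C_{i+1}\subseteq R_i\triangle C_i$.
\begin{itemize}
\item If the robber swaps an element $x\in R\cap C$ out for some $y$, the cop makes the same move. When $y\notin C$ this mirrors the robber exactly; when $y\in C\setminus R$, the cop uses a suitable swap that does not enlarge the symmetric difference.
\item Otherwise the cop swaps to decrease $|R\triangle C|$ by $2$.
\end{itemize}
Every element of the current symmetric difference has then never been touched by the robber. Once $|R\triangle C|=2$, with $C\setminus R=\{i\}$ and $R\setminus C=\{j\}$, any robber move that touches $i$ or $j$ leads to immediate capture. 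So every vertex damaged up to that point contains $j$ and avoids $i$, which gives at most $\binom{n-2}{k-1}$ vertices.

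The main obstacle is the earlier phase, while $|R\triangle C|\ge 4$, and making the monotonicity hold in all cases, including when the robber swaps two elements both inside the symmetric difference. In that phase the robber is confined to sets with fixed intersection pattern on at least four untouched elements. The number of such sets is at most $\binom{n-4}{k-2}\le\binom{n-2}{k-1}$, but these damaged sets must be counted together with those damaged in the final phase. To handle this, I would try to show that vertices damaged earlier also contain $j$ and avoid $i$, where $\{i,j\}$ is the final difference: the final difference is a subset of every earlier one, and its elements were never changed, so the earlier vertices satisfy the same constraints. This gives a total of at most $\binom{n-2}{k-1}$.
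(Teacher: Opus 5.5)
Your plan follows the paper's proof in both directions. The lower bound uses Lemma~\ref{lem:generallower} with the same distance-$1$ and distance-$2$ counts. The upper bound uses a cop strategy under which the disagreement sets only shrink, finished by the count $\binom{n-|R|-|C|}{k-|R|}\le\binom{n-2}{k-1}$. Your reduction via Pascal's rule to $\binom{n-4}{k-1}+\binom{n-4}{k-3}\ge\binom{n-4}{k-2}$ is tidier than the paper's polynomial manipulation. Your case analysis does close it, though the tool is unimodality, not log-concavity; log-concavity ($\binom{m}{a}^2\ge\binom{m}{a-1}\binom{m}{a+1}$) points the wrong way. Put $m=n-4$ and $a=k-2\ge1$. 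Then one outer term dominates $\binom{m}{a}$ unless $m=2a$, and in that case the outer terms sum to $\frac{2a}{a+1}\binom{m}{a}\ge\binom{m}{a}$. Equivalently, with $b=m-a$ the inequality is $a^2-ab+b^2\ge1$, which is exactly the paper's $n^2-(3k+2)n+3k^2+3\ge0$.

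The genuine gap is in the upper bound. Your invariant $R_{i+1}\triangle C_{i+1}\subseteq R_i\triangle C_i$ does not imply your key claim that elements of the current difference were never touched by the robber. It fails exactly in the case you flagged.
\begin{itemize}
\item \emph{Counterexample.} Take $n=7$, $k=3$, robber at $\{1,2,3\}$ and cop at $\{1,4,5\}$. The robber moves to $\{1,3,4\}$ and the cop replies $\{1,2,4\}$. The difference shrinks from $\{2,3,4,5\}$ to $\{2,3\}$, so your invariant holds. But the element $2$, just dropped by the robber, now sits on the cop's side. The already damaged vertex $\{1,2,3\}$ contains it, so counting sets that contain $3$ and avoid $2$ no longer covers all damage.
\item \emph{Fix.} You need the paper's two-sided invariant $R_{i+1}\setminus C_{i+1}\subseteq R_i\setminus C_i$ and $C_{i+1}\setminus R_{i+1}\subseteq C_i\setminus R_i$, plus a check that the cop can always maintain it. That check is the paper's three-case strategy. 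Suppose the robber swaps $x$ for $y$.
  \begin{itemize}
  \item If $y\notin C$, the cop adds $y$. She drops $x$ when $x\in C$, or drops some $x'\in C\setminus R$ when $x\notin C$.
  \item If $y\in C$ and $x\in C$, she drops $x$ and adds some $y'\in R\setminus C$.
  \item If $y\in C$ and $x\notin C$, she swaps some $x'\in(C\setminus R)\setminus\{y\}$ for some $y'\in(R\setminus C)\setminus\{x\}$, or simply stays put. If $|R\setminus C|=1$, the robber has stepped onto her.
  \end{itemize}
  In particular, she must never add an element the robber has just dropped. Your ``suitable swap'' and ``decrease by $2$'' must be pinned down this way.
\end{itemize}

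Finally, a caveat that affects the paper's proof equally. Lemma~\ref{lem:generallower} tacitly assumes the robber survives the cop's first move, which fails when the cop can start on a dominating vertex. For $n=k+1$ we have $J(k+1,k)\cong K_{k+1}$, whose damage number is $0$, not $\binom{k-1}{k-1}=1$. So for $k\ge3$ the statement really needs $n\ge k+2$. Checking $n=k+1$ ``directly'' in the binomial inequality, as you plan, would not reveal this: the inequality holds there, but the lemma does not apply.
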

\addtocounter{theorem}{-1} 
\endgroup

\begin{proof}
    We start by describing and analyzing the cop's strategy. The cop chooses her starting vertex arbitrarily and remains in place on her first turn. We explain how the cop moves for the remainder of the game. Suppose that after the cop's $i$th move, the $k$-sets occupied by the robber and the cop are $R_i$ and $C_i$, respectively. Let the robber's next move be $R_{i+1}=R_i \setminus \{x\}\cup \{y\}$. We distinguish several cases:
    
    \begin{enumerate}
        \item 
        If $x\in R_i\cap C_i$ and $y \not \in C_i$, then we let $C_{i+1}=C_i\setminus \{x\}\cup \{y\}$.
        \item 
        If $x\in R_i\setminus C_i$ and $y \not \in C_i$, then we pick an arbitrary $x'\in C_i\setminus R_i$ and let $C_{i+1}=C_i\setminus \{x'\}\cup \{y\}$.
        \item 
        If $y\in C_i$, then we choose $C_{i+1}$ such that $|R_{i+1}\cap C_{i+1}| \ge |R_i\cap C_i|+1$. This is possible because if $x\in R_i\cap C_i$, then we let $C_{i+1}=C_i\setminus \{x\}\cup \{y'\}$, where $y'\in R_i\setminus C_i$ is arbitrary; otherwise, picking arbitrary $x'\in C_i\setminus R_i$ with $x' \not = y$ and $y'\in R_i\setminus C_i$ with $y' \not = x$, we let $C_{i+1}=C_i\setminus \{x'\}\cup \{y'\}$.
    \end{enumerate}
    The following observation is immediate from the cop's strategy.
    \begin{obs}\label{nested}
           $R_{j+1}\setminus C_{j+1}\subseteq R_j\setminus C_j$ and $C_{j+1}\setminus R_{j+1}\subseteq C_j\setminus R_j$.
    \end{obs}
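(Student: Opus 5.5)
We need to prove Observation \ref{nested}: $R_{j+1}\setminus C_{j+1}\subseteq R_j\setminus C_j$ and $C_{j+1}\setminus R_{j+1}\subseteq C_j\setminus R_j$. The plan is a direct case check over the three rules of the cop's strategy. In each case we write $R_{j+1}=R_j\setminus\{x\}\cup\{y\}$ and $C_{j+1}$ explicitly, then compute the two differences. The key principle is that an element lies in $R_{j+1}\setminus C_{j+1}$ only if it was in $R_j$, or it is $y$. So it suffices to check two things: that $y$ always ends up in $C_{j+1}$, or else $y$ was excluded; and that no element of $R_j\cap C_j$ remaining in $R_{j+1}$ gets removed from $C$. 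Symmetric statements handle $C_{j+1}\setminus R_{j+1}$.

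\emph{Case 1} ($x\in R_j\cap C_j$, $y\notin C_j$, $C_{j+1}=C_j\setminus\{x\}\cup\{y\}$). Both sets undergo the same swap, so $R_{j+1}\setminus C_{j+1}=R_j\setminus C_j$ and $C_{j+1}\setminus R_{j+1}=C_j\setminus R_j$.

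\emph{Case 2} ($x\in R_j\setminus C_j$, $y\notin C_j$, $C_{j+1}=C_j\setminus\{x'\}\cup\{y\}$ with $x'\in C_j\setminus R_j$). Here $y$ joins both sets, while $x$ leaves $R$ and $x'$ leaves $C$. Hence $R_{j+1}\setminus C_{j+1}=(R_j\setminus C_j)\setminus\{x\}$. The only element newly missing from $C$ is $x'$, and $x'\notin R_{j+1}$ since $x'\notin R_j$ and $x'\ne y$ (because $y\notin C_j$). Similarly, $C_{j+1}\setminus R_{j+1}=(C_j\setminus R_j)\setminus\{x'\}$.

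\emph{Case 3} ($y\in C_j$, so $y\in C_j\setminus R_j$). There are two subcases.

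If $x\in R_j\cap C_j$ and $C_{j+1}=C_j\setminus\{x\}\cup\{y'\}$ with $y'\in R_j\setminus C_j$, then $R_{j+1}\setminus C_{j+1}=(R_j\setminus C_j)\setminus\{y'\}$ and $C_{j+1}\setminus R_{j+1}=(C_j\setminus R_j)\setminus\{y\}$. Neither $x$ nor $y$ creates a new disagreement: $x$ leaves both sets, and $y$ is in both.

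Otherwise $x\in R_j\setminus C_j$, and $C_{j+1}=C_j\setminus\{x'\}\cup\{y'\}$ with $x'\in C_j\setminus R_j$, $x'\neq y$, and $y'\in R_j\setminus C_j$, $y'\ne x$. Then $R_{j+1}\setminus C_{j+1}=(R_j\setminus C_j)\setminus\{x,y'\}$ and $C_{j+1}\setminus R_{j+1}=(C_j\setminus R_j)\setminus\{x',y\}$. This uses $x'\neq y$, so that $y\in C_{j+1}$, and $y'\ne x$, so that $y'\in R_{j+1}$.

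This subcase also needs the choices to exist. When $y\in C_j\setminus R_j$ and $x\in R_j\setminus C_j$, the set $|C_j\setminus R_j|$ must be at least $2$ so that $x'$ exists; the same condition holds for $R$ by equal sizes. If $|C_j\setminus R_j|=1$, then $R_{j+1}=C_j$ and the robber has moved onto the cop, so he is captured and the case is vacuous.

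The main (mild) obstacle is exactly this bookkeeping in Case 3: checking that the distinctness conditions $x'\ne y$, $y'\ne x$ can be met, and that they guarantee no new elements enter the symmetric difference. Everything else is immediate set algebra.
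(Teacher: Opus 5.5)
Your proof is correct and is the paper's own route: the paper only remarks that the observation is immediate from the cop's three rules, and your case-by-case computation of the two differences writes out exactly that verification. Your note that the second subcase of Case 3 needs $|C_j\setminus R_j|\ge 2$, and that otherwise $R_{j+1}=C_j$ so the robber is captured, fills in a detail the paper leaves implicit.
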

    Let $j^*$ denote the first moment in the game when $R_{j^*}\setminus C_{j^*}$ (and thus $C_{j^*}\setminus R_{j^*}$) reach their minimum size.  Let us write $R:=R_{j^*}\setminus C_{j^*}$ and $C:=C_{j^*}\setminus R_{j^*}$.
    Because of Observation \ref{nested}, all sets $R_j$ damaged by the robber contain $R$ and are disjoint with $C$. If $|R|=|C|=0$, then the robber is captured and $R_{j^*}$ is not damaged, so we can assume $|R|=|C|\ge 1$. Then the number of damaged $k$-sets is at most $\binom{n-|R|-|C|}{k-|R|}\le \binom{n-2}{k-1}$, as claimed. 

    \medskip

    For the lower bound, we will use Lemma \ref{lem:generallower}. In $J(n,k)$, we have $d_{J(n,k)}(H,H')=|H\setminus H'|=|H'\setminus H|$, and so if $H,H'$ are at distance 1, then $D_{J(n,k)}(H<H')$ consists of those sets that contain the single element of $H\setminus H'$ and do not contain the single element of $H'\setminus H$, so $|D_{J(n,k)}(H<H')|=\binom{n-2}{k-1}$. If $H,H'$ are at distance two, then $D_{J(n,k)}(H<H')$ consists of those sets that contain more elements from $H\setminus H'$ than from $H'\setminus H$. Therefore $|D_{J(n,k)}(H<H')|=2\binom{n-4}{k-1}+\binom{n-4}{k-2}+2\binom{n-4}{k-3}$, with the three terms counting those sets containing $a$ elements of $H \setminus H'$ and $b$ elements of $H' \setminus H$ for $(a,b)$ equal to $(1,0)$, $(2,0)$, and $(2,1)$, respectively.  We want to argue that $|D_{J(n,k)}(H<H')| \ge \binom{n-2}{k-1}$.  The inequality
    \[
    \binom{n-2}{k-1}\le 2\binom{n-4}{k-1}+\binom{n-4}{k-2}+2\binom{n-4}{k-3}
    \]
    is, after dividing by $\binom{n-4}{k-3}$, equivalent to
    \[
    \frac{(n-2)(n-3)}{(k-1)(k-2)} \le \frac{2(n-k-1)(n-k-2)}{(k-1)(k-2)} +\frac{(k-1)(n-k-1)}{(k-1)(k-2)} +\frac{2(k-1)(k-2)}{(k-1)(k-2)}. 
    \]
    After rearranging, we obtain
    \[
    0\le \frac{n^2-(3k+2)n+3k^2+3}{(k-1)(k-2)}.
    \]
    The denominator is positive for all $k\ge 3$. If $k\ge 4$, then the discriminant is negative and so the inequality holds. If $k=3$, then the numerator is $n^2-11n+30=(n-5)(n-6)$, which is non-negative for all integer $n$. In the case of $k=2$, the value of $|D_{J(n,2)}(H<H')|$ is $n-2$ when $|H\setminus H'| = 1$ and $2n-7$ when $|H\setminus H'| = 2$, respectively; in either case, $|D_{J(n,2)}(H<H')| \ge \binom{n-2}{k-1} = n-2$ when $n\ge 5$.
    
    Therefore, the minimum in Lemma \ref{lem:generallower} is $\binom{n-2}{k-1}$ which finishes the proof of the lower bound.
\end{proof}

\subsection{Incidence graphs of finite projective planes}
Recall that a \textit{finite projective plane} is a collection of \textit{points} and \textit{lines} such that:
\begin{itemize}
\item [\textbf{(1)}] any two distinct lines contain exactly one common point; 
\item [\textbf{(2)}] any two distinct points belong to exactly one common line; and
\item [\textbf{(3)}] there exist four points, of which no line contains more than two.
\end{itemize}
It is a well-known consequence of the axioms that in any projective plane, there exists some $q$ such that every line contains $q+1$ points and every point lies on $q+1$ lines; $q$ is referred to as the \textit{order} of the projective plane.  It is well-known that finite projective planes exist for any order $q$ that is a power of a prime.

The \textit{incidence graph} of a projective plane $\Pi$ contains one vertex corresponding to each point in $\Pi$, one vertex corresponding to each line, and edges joining each line to all points on that line.


\begin{defn}
Let $G$ be the incidence graph of a projective plane of order $q$, and let $S$ be a subset of $V(G)$.  We say that $S$ is \textit{closed} if for any two vertices $u$ and $v$ in $S$, the common neighbor of $u$ and $v$ in $G$ (if it exists) also belongs to $S$.
\end{defn}

Closed vertex sets are, in a sense, ``easy'' for a cop to protect, as we next show.

\begin{lem}\label{lem:proj_plane_helper}
Let $G$ be the incidence graph of a projective plane of order $q$, let $S$ be a set of vertices in $G$, let $S_P$ (respectively $S_L$) be the subset of $S$ containing those vertices that correspond to points (resp. lines), and let $x \in S_P$ be the starting vertex of the cop.  If $S$ is closed and the robber begins the game on some vertex $y$ such that $y \not \in S$ and every neighbor of $y$ in $S_L$ is also a neighbor of $x$, then the cop can prevent the robber from damaging any vertex in $S$.
\end{lem}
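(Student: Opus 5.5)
The plan is to have the cop maintain a simple invariant that forces the robber to be captured the moment he steps into $S$. The invariant is:

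\emph{At the start of every robber turn, the cop at $c$ and the robber at $r$ lie in the same part of $G$ (both points or both lines), $r\notin S$, and every neighbor of $r$ in $S$ is adjacent to $c$.}

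If this holds and the robber moves into a vertex of $S$, that vertex is adjacent to $c$. The cop then moves onto it on her next turn, so by the definition of damage it is not damaged. Since $y\notin S$, this proves the lemma. The cop is never required to stay inside $S$; only the robber's position matters.

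The key structural fact comes from closedness. \emph{If $z\notin S$, then $z$ has at most one neighbor in $S$.} Indeed, any two neighbors of $z$ lie in the same part and have $z$ as their unique common neighbor. So if both were in $S$, closedness would force $z\in S$. I will also use that in the incidence graph any two distinct vertices of the same type have exactly one common neighbor, by axioms \textbf{(1)} and \textbf{(2)}.

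First I establish the invariant after the cop's first move. If $y$ is a point, the cop stays at $x$. Both players are then on points, and $y$ has no neighbors in $S_P$ (the graph is bipartite). Every neighbor of $y$ in $S_L$ is adjacent to $x$ by hypothesis, so the invariant holds. If $y$ is a line, then by the key fact $y$ has at most one neighbor $p\in S$, and $p\in S_P$. If $p=x$, the cop is adjacent to $y$ and captures immediately. If $p\neq x$, the cop moves to the line through $x$ and $p$. If no such $p$ exists, she moves to any line through $x$. In each case both players are on lines and $N(y)\cap S\subseteq N(c)$.

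Next, the maintenance step. Suppose the invariant holds and the robber moves from $r$ to a neighbor $r'$, which has the opposite type from $c$. If $r'\in S$ he is captured as above, so assume $r'\notin S$. By the key fact $r'$ has at most one neighbor $s\in S$, and $s$ has the same type as $c$. If $s=c$, then $c$ is adjacent to $r'$ and the cop captures. If $s\neq c$, the cop moves to the unique common neighbor $c'$ of $c$ and $s$. If there is no such $s$, the cop moves to an arbitrary neighbor $c'$ of $c$. In every case $c'$ has the same type as $r'$ and $N(r')\cap S\subseteq N(c')$, so the invariant is restored.

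The main obstacle I expect is the parity bookkeeping and the opening move rather than any deep step. A first attempt that keeps the cop inside $S$ breaks down: for example, with $S=\{c,L\}$ and $c\notin L$, the cop is stuck. Noticing that the cop may leave $S$ freely, and that the real invariant is ``same type, and the robber's unique possible $S$-neighbor is adjacent to the cop,'' is what makes the argument work. The hypothesis on $y$ is exactly what is needed in the point case of the first move.
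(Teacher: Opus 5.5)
Your proof is correct and follows essentially the same route as the paper. Both use the same invariant (cop and robber in the same part, robber outside $S$, every $S$-neighbor of the robber adjacent to the cop), the same closedness fact that a vertex outside $S$ has at most one neighbor in $S$, and the same cop response of moving to the common neighbor of her vertex and that $S$-vertex.

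The only difference is that the paper also keeps the cop inside $S$ (its invariant (i)). That leads it to prescribe moves such as ``move to any vertex of $S_L$'', which need not be legal for degenerate closed sets like $S=\{x\}$ or your two-vertex example. Dropping that unnecessary invariant, as you do, makes the argument work for every closed $S$.
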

\begin{proof}
The cop begins the game on $x$.  Suppose the robber begins on some vertex $y$ such that $y \not \in S$ and every neighbor of $y$ in $S_L$ is also a neighbor of $x$.  We may assume that $y \not \in N(x)$, since otherwise the cop could capture the robber on her first turn.  

\medskip
Throughout the game, the cop will maintain the following invariants after each cop turn: 
\begin{itemize}
    \item[(i)] the cop occupies a vertex of $S$;
    \item[(ii)] the cop and robber occupy vertices in the same partite set of $G$;
    \item[(iii)] the robber does not occupy a vertex in $S$; and
    \item[(iv)] all of the robber's neighbors in $S$ are also neighbors of the cop.
\end{itemize}
By the assumption that $y \not \in S$, invariant (iii) holds prior to the cop's first move, so we need only concern ourselves with invariants (i), (ii), and (iv).  If $y$ corresponds to a point, then the cop remains on his current vertex.  Invariants (i) and (ii) clearly hold; moreover, every neighbor of $y$ in $S$ must belong to $S_L$, and by assumption every neighbor of $y$ in $S_L$ is also a neighbor of $x$, so invariant (iv) also holds.  Suppose instead that $y$ corresponds to a line.  If $y$ has no neighbors in $S_P$, then the cop moves to any vertex in $S_L$, and again the invariants clearly hold.  Suppose otherwise.  Note that $y$ can only have one neighbor in $S_P$: if $y$ had two such neighbors, say $z$ and $z'$, then the unique common neighbor of $z$ and $z'$ in $G$ must be $y$; by closure of $S$, the unique common neighbor of any two points in $S$ must lie in $S$, hence $y \in S$, contradicting an earlier assumption.  Thus, let $z$ denote the unique neighbor of $y$ in $S_P$; the cop moves to the unique common neighbor of $x$ and $z$ (which, by closure of $S$, lies in $S_L$).  Once again, the invariants clearly hold.

\medskip

Thus, the invariants hold after the first cop turn.  Suppose that the invariants hold after the $k$th cop turn; we will show how the cop can ensure that they hold after the $(k+1)$st.  

\medskip

Consider the state of the game after the robber's $k$th turn.  If the robber did not move on his $k$th turn, then the cop also remains at his current vertex, which clearly maintains all four invariants.  Suppose instead that the robber did move to a new vertex, say $z$.  If $z \in S$ then, because invariant (iv) held after the cop's $k$th turn, the cop must be adjacent to $z$ and can thus capture the robber, ending the game.  Hence we may suppose that $z \not \in S$.  By symmetry, we may assume that $z \in S_P$.  Because invariants (i) and (ii) held after the $k$th cop turn, the cop must currently occupy some vertex in $S_L$, say $w$.  If $z$ has a neighbor $z'$ in $S_L$, then the cop moves to the unique common neighbor of $w$ and $z'$; it is easily checked that this maintains all four invariants.  If instead $z$ has no neighbor in $S_L$, then the cop moves to any vertex of $S_P$; again, this maintains the invariants.

\medskip

Since the invariants hold after every cop turn, the robber can never reach a vertex in $S$ without immediately being captured; hence, he can never damage any vertex in $S$.
\end{proof}

The following lemma will help us bound the size of a closed vertex set. 

\begin{lem}\label{lem:proj_plane_closed}
Let $G$ be the incidence graph of a projective plane $\Pi$ of order $q$, let $S$ be a set of vertices in $G$, and let $S_P$ (respectively $S_L$) be the subset of $S$ containing those vertices that correspond to points (resp. lines) in $\Pi$.  If $S$ is closed and $\size{S_P} \ge q+3$, then $S$ induces the incidence graph of a subplane of $\Pi$.
\end{lem}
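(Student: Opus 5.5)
The plan is to check directly that the induced incidence structure $(S_P,S_L)$ satisfies axioms \textbf{(1)}--\textbf{(3)}, with incidence inherited from $\Pi$. Closure handles the first two axioms almost immediately. The real content is producing a quadrangle inside $S_P$, and that is where the hypothesis $\size{S_P}\ge q+3$ is used.

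\emph{Axioms \textbf{(1)} and \textbf{(2)}.} For \textbf{(2)}, take two distinct points $u,v\in S_P$. They are at distance $2$ in $G$, and their unique common neighbor is the line of $\Pi$ joining them. Since $S$ is closed, this line lies in $S_L$. So any two points of $S_P$ lie on exactly one common line of $S_L$; uniqueness is inherited from $\Pi$. The same argument applied to two lines of $S_L$ gives \textbf{(1)}: their intersection point is their common neighbor, so it lies in $S_P$. Because $\size{S_P}\ge q+3\ge 2$, the set $S_L$ is nonempty. Once we find a quadrangle below, $S_L$ will in fact contain several lines, so \textbf{(1)} is not vacuous.

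\emph{Axiom \textbf{(3)}.} We need four points of $S_P$, no three on a common line. If three points of $S_P$ were collinear in $\Pi$, the line through them would lie in $S_L$ by closure. So it suffices to rule out collinearity in $\Pi$. Let $\ell\in S_L$ be a line containing the maximum number $k\ge 2$ of points of $S_P$; such a line exists by the previous paragraph. Since $\ell$ has only $q+1$ points in total, at least $q+3-k\ge 2$ points of $S_P$ lie off $\ell$.

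\emph{Case $k=2$.} No line of $S_L$ contains three points of $S_P$. By the remark above, no three points of $S_P$ are then collinear in $\Pi$ at all. Hence any four points of $S_P$ (there are at least $q+3\ge 4$) form a quadrangle.

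\emph{Case $k\ge 3$.} Pick two points $c,d\in S_P$ off $\ell$. The line $cd$ meets $\ell$ in a single point $p$. Choose $a,b\in S_P\cap\ell$ with $a,b\neq p$; this is possible since $k\ge 3$. Now check the four triples:
\begin{itemize}
\item $\{a,b,c\}$ and $\{a,b,d\}$ are not collinear, since the only line through $a,b$ is $\ell$, and $c,d\notin\ell$.
\item $\{a,c,d\}$ collinear would force $a\in cd\cap\ell=\{p\}$, contradicting $a\neq p$.
\item $\{b,c,d\}$ is not collinear by the same argument with $b$ in place of $a$.
\end{itemize}
So $a,b,c,d$ is a quadrangle in $S_P$.

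The only step needing care is this quadrangle argument. The threshold $q+3$ is exactly what guarantees two points of $S_P$ off the richest line $\ell$. Otherwise $S_P$ could consist of a full line together with one extra point, which is closed but is not a subplane.
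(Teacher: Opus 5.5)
Your proof is correct and follows essentially the same route as the paper. Closure gives axioms \textbf{(1)} and \textbf{(2)}. For \textbf{(3)}, you take a line carrying three points of $S_P$ and two points of $S_P$ off it (which $\size{S_P}\ge q+3$ guarantees), then choose two on-line points avoiding the intersection with the line through the off-line pair. Choosing $a,b\neq p$ directly is a slightly cleaner packaging of the paper's case split between $\{u,v,x,y\}$ and $\{u,v,y,z\}$.
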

\begin{proof}
Suppose $S$ is closed and $\size{S_P} \ge q+3$.  Since $S$ is closed, $S$ satisfies axioms (1) and (2) of a projective plane, so we need only check axiom (3).  

\medskip

If no three points in $S$ are collinear, then any four points of $S$ satisfy the axiom.  Otherwise, let $x$, $y$, and $z$ be collinear point vertices in $S$, and let $w$ be their common neighbor.  Since no line contains more than $q+1$ points, and since $\size{S_P} \ge q+3$, there must be point vertices $u$ and $v$ in $S$ that are not adjacent to $w$.  

\medskip

Consider the set $\{u, v, x, y\}$.  By choice of $u$ and $v$, neither can be collinear with both $x$ and $y$.  If neither $x$ nor $y$ lies on a common line together with both $u$ and $v$, then the set $\{u,v,x,y\}$ satisfies the axiom, as desired.  Suppose instead that $u$, $v$, and $x$ are collinear (the case where $u, v$, and $y$ are collinear is similar).  Consider the set $\{u, v, y, z\}$.  Once again, neither $u$ nor $v$ can share a common line with both $y$ and $z$.  It also cannot be that vertices $u$, $v$, and $y$ are collinear: by assumption the common neighbor of $u$ and $v$ is also adjacent to $x$, so if it were also adjacent to $y$, then it would have to be $w$, the common neighbor of $x$ and $y$; however, by choice of $u$ and $v$, this is not the case.  Similarly, $u$, $v$, and $z$ cannot be collinear.  Hence the set $\{u,v,y,z\}$ satisfies axiom (3) of a projective plane, which completes the proof.
\end{proof}

Now we are ready to prove Theorem \ref{thm:proj_plane2}, but we need to define all notions it uses. Let $\Pi$ be a projective plane of order $q$. A \textit{Baer subplane} $\Pi'$ of $\Pi$ is a subplane of order $\sqrt{q}$. Therefore, $\Pi$ does not contain any Baer subplanes if $q$ is not a square and so the first point of the furthermore part of Theorem \ref{thm:proj_plane2} applies to many of the known projective planes. A projective plane $\Pi$ is \textit{doubly transitive} if for any two pairs of points $x,y$ and $x',y'$ in $\Pi$ there exists an automorphism $\phi$ of $\Pi$ such that $\phi(x)=x',\phi(y)=y'$ and the same hold for any two pairs $\ell_1,\ell_2$ and $\ell'_1,\ell'_2$ of lines. The most well-known projective planes are $PG(2,q)$ where $q$ is a prime power, the points of $PG(2,q)$ are the 1-dimensional subspaces of a 3-dimensional vector space $V$ over the finite field $\mathbb{F}_q$ of $q$ elements, and lines are the 2-dimensional subpaces of $V$. It is clear that $PG(2,q)$ is doubly transitive and if $q$ is square, then it contains a Baer subplane, so the second point of the furthermore part of Theorem \ref{thm:proj_plane2} applies to $PG(2,q)$.

\begingroup
\renewcommand{\thetheorem}{\ref{thm:proj_plane2}} 
\begin{theorem}
If $G$ is the incidence graph of a projective plane $\Pi$ of order $q$, then 
\[2q^2-2\sqrt{q}\le \dmg(G)\le 2q^2-2 .\]
Furthermore, 
\begin{itemize}
    \item 
    if $\Pi$ does not contain a Baer subplane, then $\dmg(G)=2q^2-2$,
    \item 
    if $\Pi$ contains a Baer subplane and is doubly transitive, then $\dmg(G) = 2q^2-2\sqrt{q}$. 
\end{itemize}
\end{theorem}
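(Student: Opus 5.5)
The plan is to prove the two upper bounds by exhibiting closed sets $S$ to which Lemma~\ref{lem:proj_plane_helper} applies, and to prove the lower bounds by showing that the undamaged vertices at the end of optimal play essentially form a closed set, which Lemma~\ref{lem:proj_plane_closed} then bounds. Write $n=\size{V(G)}=2(q^2+q+1)$. Then $2q^2-2=n-(2q+4)$ and $2q^2-2\sqrt q=n-2(q+\sqrt q+1)$. So it suffices to find protected closed sets of sizes $2q+4$ and $2(q+\sqrt q+1)$, and to show that no larger set can remain undamaged.

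\textbf{Upper bound $2q^2-2$.} Fix a line $w$ and a point $z\notin w$. Let $S_P$ consist of the $q+1$ points of $w$ together with $z$, and let $S_L$ consist of $w$ together with the $q+1$ lines $zp$ for $p\in w$. Then $\size{S}=2q+4$, and $S$ is closed:
\begin{itemize}
\item two points of $w$ meet in $w$, and $z$ and $p$ meet in $zp$;
\item two lines $zp, zp'$ meet in $z$, and $zp$ and $w$ meet in $p$.
\end{itemize}
The cop starts at $x=z$. If the robber starts at a point $y\notin S$, then $y\notin w$, so its only neighbour in $S_L$ is the line $zy$, which is adjacent to $z$. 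A line never has line-neighbours. So the hypothesis of Lemma~\ref{lem:proj_plane_helper} holds and $S$ is never damaged. If the robber starts inside $S$, I would argue separately that the cop captures or confines him, for instance by starting at a point chosen after a short case check. I expect this to be a routine but fiddly point.

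\textbf{Upper bound $2q^2-2\sqrt q$ with a Baer subplane and double transitivity.} Take $S$ to be (the incidence graph of) a Baer subplane $\Pi'$, which is closed and has $2(q+\sqrt q+1)$ vertices. I would use the standard fact that every point outside $\Pi'$ lies on exactly one line of $\Pi'$, and dually every line outside $\Pi'$ contains exactly one point of $\Pi'$. The cop starts at a point $x$. Once the robber has chosen $y$, the cop uses double transitivity to pick an image of $\Pi'$ that contains $x$ and avoids $y$. If $y$ is a point, the image should also contain the line $xy$, so that $y$'s unique $S_L$-neighbour passes through $x$. Then Lemma~\ref{lem:proj_plane_helper} applies. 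The choice of $S$ is only used inside the cop's strategy, so choosing it after the robber moves is legitimate.

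\textbf{Lower bounds.} Let $U$ be the set of vertices that are never damaged under optimal play. The key claim is that $U$, possibly after discarding a constant number of vertices near the cop, is closed. Suppose $u,u'\in U$ are of the same type with common neighbour $m\notin U$. Then $m$ is damaged, so the robber has been at $m$ and stepped from $m$ to $u$ or $u'$ safely. In this graph the cop at any vertex is adjacent to at most one of $u,u'$, since two same-type vertices have a unique common neighbour. So the robber, being at $m$ or able to return there, can damage one of $u,u'$.

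I expect making this argument rigorous to be the main obstacle. The robber must be able to return to $m$ from his current position without capture while the cop is not positioned to block both targets. I would try an Observation~\ref{robbercloser}-type distance argument using the fact that the graph has diameter $3$ and girth $6$.

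Given closedness, two cases finish the proof.
\begin{itemize}
\item If $U$ has at least $q+3$ points (or, dually, at least $q+3$ lines), Lemma~\ref{lem:proj_plane_closed} makes it a proper subplane. By Bruck's bound its order is at most $\sqrt q$, with equality exactly for Baer subplanes, so $\size{U}\le 2(q+\sqrt q+1)$. This gives $\dmg(G)\ge 2q^2-2\sqrt q$ in general.
\item Otherwise $U$ has at most $q+2$ points and at most $q+2$ lines, so $\size{U}\le 2q+4$. If $\Pi$ has no Baer subplane, the first case also gives $\size{U}<2q+4$ for large $q$, so $\dmg(G)\ge 2q^2-2$.
\end{itemize}
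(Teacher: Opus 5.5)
\textbf{Upper bounds.} Your upper bounds follow the paper. You use the same closed set $N[z]\cup N[w]$ of size $2q+4$, and a Baer subplane positioned by double transitivity, and both are fed into Lemma~\ref{lem:proj_plane_helper}. You leave open the case where the robber starts on $w$ or on a point of $w$. That case disappears if the cop chooses $w$ only after the robber has placed himself, as the paper does and as you already do in the Baer case. Take $w$ to be a line not through $z$, distinct from the robber's vertex, and not through it if it is a point.

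\textbf{Lower bound: closedness of the wrong set.} The genuine gap is the closedness step, and it comes from working with the wrong set. The set $U$ of never-damaged vertices need not be closed. The robber may have damaged $m$ at a moment when he chose not to step to $u$ or $u'$. Later the cop may stand so that $m$ is no longer safely reachable, so in general there is no way to ``return to $m$'', and no distance argument will supply one. Discarding a few vertices does not help either, since any constant loss destroys the exact values $2q^2-2$ and $2q^2-2\sqrt q$.

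\textbf{The missing idea.} Replace $U$ by a superset defined by reachability from a late position.
\begin{itemize}
\item Fix an optimal cop strategy and a robber strategy achieving $\dmg(G)$ against it. Stop at a robber turn by which $\dmg(G)$ vertices are damaged.
\item Let $S$ be the set of vertices that the robber, from this position and against this cop strategy, cannot reach and then survive the following cop move.
\item Every undamaged vertex lies in $S$, since otherwise the robber could push the damage above $\dmg(G)$.
\end{itemize}
Your one-step argument now works with no return needed. Suppose $v\notin S$ had two neighbours $x,y\in S$. The robber reaches $v$ safely, so the cop then stands on some $c\neq v$. Since $v$ is the only common neighbour of $x$ and $y$, $N[c]$ contains at most one of them. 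The robber steps to the other and survives, contradicting its membership in $S$. Hence $S$ is closed, $\size{V(G)}-\size{S}\le\dmg(G)$, and your case analysis via Lemma~\ref{lem:proj_plane_closed} applies to $S$.

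\textbf{Bruck's theorem.} In the no-Baer case, ``order at most $\sqrt q$, with equality only for Baer subplanes'' does not give $\size{S}<2q+4$. It does not exclude, say, $q=m^2+1$ with $m\ge 3$, where $2(m^2+m+1)>2q+4$. You need the full form of Bruck's theorem: a proper subplane of order $m$ with $m^2\neq q$ satisfies $m^2+m\le q$. This gives $\size{S}\le 2q+2$ for every $q$, with no largeness assumption.
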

\addtocounter{theorem}{-1} 
\endgroup

\begin{proof}
Let $V_P$ and $V_L$ denote the vertices of $G$ corresponding to the points and lines of $\Pi$, respectively.  

\medskip

We begin with the lower bounds.  Initially, the robber begins on any vertex not adjacent to the cop.  Consider a later point in the game, on which it is the robber's turn and the robber has damaged $\dmg(G)$ vertices.  Let $S$ denote the set of vertices in $G$ that the robber cannot reach, after some number of turns, without being captured by the cop.  Since the robber can safely reach (and therefore damage) every vertex not in $S$, all undamaged vertices must belong to $S$; hence $\dmg(G) \ge \size{V(G)} - \size{S}$. 

\medskip

Thus motivated, we seek to bound $\size{S}$.  We first claim that $S$ must be closed.  Suppose to the contrary that some vertex $v$ outside $S$ has two neighbors $x$ and $y$ in $S$.  By definition of $S$, the robber can eventually reach $v$ and, on his subsequent turn, move to either $x$ or $y$.  The cop cannot simultaneously defend both $x$ and $y$: the unique common neighbor of $x$ and $y$ is $v$, and since the robber has reached $v$ without being captured, clearly the cop does not occupy $v$.  Hence no matter how the cop responds, the robber can reach at least one vertex in $S$, which is a contradiction; it follows that $S$ must be closed.

\medskip

By Lemma \ref{lem:proj_plane_closed}, either $S$ contains at most $q+2$ point vertices and (by symmetry) at most $q+2$ line vertices, or it induces a subplane $\Pi'$ of $\Pi$.  In the former case we have $\size{S} \le 2q+4$ and hence
\[\dmg(G) \ge \size{V(G)} - \size{S} \ge 2q^2+2q+2 - (2q+4) = 2q^2-2.\]
In the latter case, let $m$ be the order of $\Pi'$; it is well-known (see, e.g., \cite[Theorem 1.12]{KissSzonyi}) that $m^2 = q$ if $\Pi'$ is a Baer subplane, and $m^2+m \le q$ otherwise.  Thus, if $\Pi$ contains a Baer plane, we have $\size{S} \le 2(m^2+m+1) = 2q+2\sqrt{q}+2$, and so
\[\dmg(G) \ge \size{V(G)} - \size{S} \ge 2q^2+2q+2 - (2q+2\sqrt{q}+2) = 2q^2-2\sqrt{q}.\]
If not, then
\[\dmg(G) \ge \size{V(G)} - \size{S} \ge 2q^2+2q+2 - (2q+2) = 2q^2;\]
however, note that this is stronger than the bound from the first case, so we can only conclude that $\dmg(G) \ge 2q^2-2$.  This completes the proof of the lower bounds.

\medskip

For the upper bounds, we use Lemma \ref{lem:proj_plane_helper} to show that the cop can prevent the robber from damaging many vertices.  The cop selects any vertex $x$ in $V_P$ and begins the game on $x$.  We may assume that the robber does not begin on a vertex adjacent to the cop.  Let $y$ be any vertex of $V_L$ that is adjacent neither to $x$ nor to the robber.   Applying Lemma \ref{lem:proj_plane_helper} with $S = N[x] \cup N[y]$ shows that the cop can prevent the robber from damaging any vertex in $S$.  Hence, 
\[\dmg(G) \le \size{V(G)} - \size{S} = 2(q^2+q+1) - (2q+4) = 2q^2-2.\]

If $\Pi$ contains a Baer subplane and is doubly transitive, then the cop can do better.  
Let $z$ be an arbitrary vertex that the cop picks as start position and let $y$ be the start position of the robber. We can assume that $y\notin N[z]$ as then the cop will immediately capture the robber. Let $x$ be an arbitrary vertex of $N[z]$ in the same part of $G$ as $y$. In particular, $x=z$ if $z$ and $y$ are in the same part. Let $S'$ be the vertices of $G$ corresponding to a Baer subplane of $\Pi$. Let $x'$ be an arbitrary vertex in $S'$ in the same part of $G$ as $y$. Let $u'\in S'$ be a neighbor of $x'$. As $u'$ has $q+1$ neighbors in $G$ and only $\sqrt{q}+1$ of them are in $S'$, there is a neighbor $y'$ of $u'$ with $y'\notin S'$. As $\Pi$ is doubly transitive, there exists an automorphism $\phi$ of $\Pi$ such that $\phi(x')=x,\phi(y')=y$. After the first move of the cop, she will be at $x$ and we can apply Lemma \ref{lem:proj_plane_helper} to $x$, $y$ and $S=\phi(S')$. Now,
\[\dmg(G) \ge \size{V(G)} - \size{S} \ge 2q^2+2q+2 - (2q+2\sqrt{q}+2) = 2q^2-2\sqrt{q}. \qedhere\]
\end{proof}

\subsection{The Erd\H os-R\'{e}nyi random graph $G(n,p)$}
Formally, $G(n,p)$ is the probability space of all labeled graphs on $n$ vertices with $\mathbb{P}(G(n,p)=G)=p^{|E(G)|}(1-p)^{\binom{n}{2}-|E(G)|}$. Equivalently, for any pair $x,y$ of vertices, $xy\in E(G(n,p))$ with probability $p$ independently of all other pairs. We say that a sequence $\mathcal{E}_n$ of events hold \textit{with high probability} (\textit{w.h.p.}~for short) if $\mathbb{P}(\mathcal{E}_n)$ tends to 1 as $n$ tends to infinity.

We begin with an auxiliary lemma.  In the lemma below, as well as in the proof of Theorem \ref{random}, we use the notation $N(\bar{x_1}, \bar{x_2}, \dots, \bar{x_k}, y)$ to denote the set of vertices that are nonadjacent to all of $x_1, x_2, \dots, x_k$ but are adjacent to $y$ (not including the vertices $x_1, \dots, x_k, y$ themselves).

\begin{lem} \label{lem:chernoff}
Let $p=p(n)<1$, $f=f(n)<1$ and $k=k(n)$ a positive integer be such that $npf^2(1-p)^{k} \gg k \log \frac{en}{k}$, then w.h.p.~for all sequences of different vertices $x_1, x_2, \dots, x_i, x_{i+1}$ of $G(n,p)$, with $1\leq i\leq k$, it holds that 
\[|N(\bar{x}_1, \bar{x}_2,\dots, \bar{x}_i, x_{i+1})| \geq (1-f)p(1-p)^{i}n. \]
\end{lem}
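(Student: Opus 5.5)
The plan is a first-moment argument: fix one sequence, bound its failure probability with Lemma~\ref{lem:ch-bound}, and take a union bound over all sequences of length at most $k+1$.

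Fix $1\le i\le k$ and distinct vertices $x_1,\dots,x_{i+1}$. For each of the other $n-i-1$ vertices $w$, let $I_w$ indicate that $w$ is adjacent to $x_{i+1}$ and to none of $x_1,\dots,x_i$. These indicators concern pairwise disjoint edge sets, so they are independent, and each has probability $p(1-p)^i$. Hence $X=|N(\bar x_1,\dots,\bar x_i,x_{i+1})|=\sum_w I_w$ has mean
\[
\mu=(n-i-1)p(1-p)^i .
\]

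To reach the stated threshold $(1-f)p(1-p)^in$ rather than $(1-f)\mu$, apply Lemma~\ref{lem:ch-bound} with a slightly smaller deviation $f'=f-\frac{2k}{n}$. We may assume $k=o(n)$; this follows from the hypothesis, since $npf^2(1-p)^k\le n$ forces $k\log(en/k)\ll n$. We may likewise assume $k/n=o(f)$, because the hypothesis gives $f^2\gg k\log(en/k)/n\ge k/n$, so $f\gg\sqrt{k/n}\ge k/n$. Then $f'=(1-o(1))f$, and
\[
(1-f')\mu\le(1-f)p(1-p)^in
\]
holds for large $n$, because $(1-f')(1-\frac{i+1}{n})\le 1-f$ once $f-f'\ge \frac{2k}{n}\ge\frac{i+1}{n}$ (using $i+1\le 2k$). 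Chernoff then gives
\[
\Prob[\text{failure}]\le\exp\!\Big(-\tfrac{f'^2\mu}{2}\Big)\le\exp\!\big(-c\,f^2np(1-p)^k\big)
\]
for some constant $c>0$, using $(1-p)^i\ge(1-p)^k$ and $n-i-1\ge n/2$.

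For the union bound, the number of sequences of length $i+1\le k+1$ is at most $n^{i+1}\le n^{k+1}$, and summing over $i\le k$ gives at most $k\,n^{k+1}$ sequences. The total failure probability is therefore at most
\[
\exp\!\big((k+2)\log n - c f^2np(1-p)^k\big).
\]

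This final comparison is the step I expect to be the main obstacle. The hypothesis is stated with $k\log\frac{en}{k}$, not $k\log n$, and these are comparable only when $k\le n^{1-\Omega(1)}$. Ordered sequences genuinely cost about $k\log n$, so the union bound needs $f^2np(1-p)^k\gg k\log n$. My first remedy would be to reduce to unordered data: the event depends only on the set $\{x_1,\dots,x_i\}$ and the distinguished vertex $x_{i+1}$, giving $n\binom{n}{i}\le n(en/i)^i$ choices. This bound is $\exp(i\log(en/i)+\log n)$, and since $i\log(en/i)$ is increasing in $i$ up to $n$, it is at most $\exp(k\log\frac{en}{k}+\log n)$. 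With the extra factor $k$ from summing over $i$, this is dominated by $\exp(2k\log\frac{en}{k})$ when $k\ge1$ (as $k\log(en/k)\ge\log(en)$ for $k\ge 1$ when $k\le n$). The hypothesis $cf^2np(1-p)^k\gg k\log\frac{en}{k}$ then makes the total failure probability $\exp(-\omega(1))=o(1)$, which proves the lemma.
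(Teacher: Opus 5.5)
Your proof is correct and follows the paper's route: Chernoff for a fixed configuration, then a union bound over the $n\binom{n}{i}\le n(en/i)^i$ choices of the set $\{x_1,\dots,x_i\}$ and the vertex $x_{i+1}$. Your $f'=f-\frac{2k}{n}$ device and your explicit handling of the factor $n$ in $n\binom{n}{i}$ also tidy up two points the paper glosses over: the paper only controls the threshold $(1-f)\mu$, and its union-bound estimate drops that factor $n$.

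One sign slip needs fixing. What you need is $(1-f')\mu\ge(1-f)p(1-p)^in$, i.e.\ $(1-f')(1-\frac{i+1}{n})\ge 1-f$, so that failure forces $X\le(1-f')\mu$. This is also what your condition $f-f'\ge\frac{2k}{n}\ge\frac{i+1}{n}$, together with $f'>0$, actually gives. With the $\le$ you wrote, the inclusion of events would run the wrong way.
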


\begin{proof}
    For fixed $x_1, x_2, \dots, x_i, x_{i+1}$, we have $\mu:=\Ex\big[|N(\bar{x}_1, \bar{x}_2,\dots, \bar{x}_i, x_{i+1})|\big] = (n-i-1)p(1-p)^{i} = \Theta(np(1-p)^{i})$. Therefore, by the Chernoff bound in Lemma~\ref{lem:ch-bound}, we get
    \[ \Prob \big[|N(\bar{x}_1, \bar{x}_2,\dots, \bar{x}_i, x_{i+1})| < (1-f) \mu\big] \leq \exp\left(-f^2\mu/2\right) = \exp\left(-\Theta(npf^2(1-p)^i)\right). \]
    
    By the union bound, the probability that this event does not happen for any choice of $x_1, x_2, \dots, x_i, x_{i+1}$, with $1\leq i\leq k$, is upper bounded by
    \[ 
    \begin{split} \sum_{i=1} ^{k} n\binom{n}{i} \exp\left(-\Theta(npf^2(1-p)^i)\right) &\leq \sum_{i=1} ^{k} \exp\left( i\log \frac{en}{i} - \Theta(npf^2(1-p)^i)\right) \\
    &\leq k \exp\left( k\log \frac{en}{k} - \Theta(npf^2(1-p)^k)\right).
    \end{split}
    \]
    Knowing that $\log k=o\big(\max\{k,\log n\}\big)$, the condition of the lemma now implies that the last expression tends to zero when $n$ tends to infinity.
\end{proof}

\begingroup
\renewcommand{\thetheorem}{\ref{random}} 
\begin{theorem}
\begin{itemize}
    \item[(i)] 
    For a constant $0<p<1$, we have $\dmg(G(n,p))=n-(1+o(1))pn$ w.h.p.
    \item[(ii)]
    For $\frac{\log^{2/5}n}{n^{1/5}} \ll p(n) \ll 1$, we have $\dmg(G(n,p))= n-\Theta(np)$ w.h.p.
\end{itemize}
\end{theorem}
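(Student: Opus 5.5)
The proof has two halves: an upper bound from the cop's neighbourhood strategy, and a lower bound from a robber strategy driven by Lemma~\ref{lem:chernoff}.

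\textbf{Upper bound.} We use the bound $\dmg(G)\le |V(G)|-\Delta(G)-1$ of Cox and Sanaei. By the Chernoff bound, w.h.p.\ $\Delta(G(n,p))\ge (1-o(1))pn$ whenever $np\gg\log n$. This gives $\dmg\le n-(1+o(1))pn$ in case (i) and $\dmg\le n-\Omega(np)$ in case (ii).

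\textbf{Lower bound, strategy.} The robber starts at a vertex not adjacent to the cop. When it is his turn, he sits at $r$, the cop sits at $c$, and $c$ is not adjacent to $r$. A target vertex $w$ is \emph{safe} if it is a neighbour of $r$ that is not adjacent to $c$ and not equal to $c$. Moving to a safe $w$ means he survives the next cop move, so the vertex he stands on gets damaged. The robber therefore always moves to a safe vertex that is not yet damaged. The available safe vertices are $N(\bar c,r)$, of size roughly $p(1-p)n$, which is large. The difficulty is that many of them may already be damaged.

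\textbf{Lower bound, lookahead.} To handle this, the robber uses a short lookahead of depth $k$. Suppose some undamaged vertex $z$ remains. The robber looks for a path $r=v_0,v_1,\dots,v_j=z$ such that each $v_{t+1}$ is non-adjacent to all cop positions the cop could reach in time. Concretely, Lemma~\ref{lem:chernoff} guarantees, simultaneously for all choices, that $N(\bar x_1,\dots,\bar x_i,y)$ has size about $p(1-p)^i n$. The robber first walks into a common neighbourhood that avoids the cop's current and near-future relevant vertices. Then from each vertex he has a choice set of size $\Omega(np(1-p)^k)$, which lets him reach $z$ within two steps while dodging the cop.

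A cleaner formulation is the following. Fix the undamaged set $U$. If $|U|\ge Cnp$, the robber wants to reach some $z\in U$. Vertices $z'$ with $z'\in N(r)\cap N(z)\setminus N[c]$ exist w.h.p.\ when $np^2\gg \log n$, since the expected count is about $np^2(1-p)$. The robber steps to such a $z'$; it is safe since it is not adjacent to $c$. The cop then moves to some $c'$. The robber now needs $z\notin N[c']$. The cop can block $z$ only by moving into $N[z]$. Here the robber exploits having many candidates: there are many $z\in U$ and many choices of $z'$. He picks $z'$ so that the cop cannot simultaneously cover all the targets in $U\cap N(z')$. That set has size about $p|U|$, while any single cop vertex covers only about $p\cdot p|U|$ of them.

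This requires $|U\cap N(z')\cap N(\bar{\text{anything}})|$ to concentrate for all subsets $U$ of size $\ge Cnp$. Concentration uniformly over all sets, not just over tuples, is the delicate union bound. It should need $U$ large relative to $\log n/p^2$ and similar quantities, and this is plausibly where the threshold $p\gg \log^{2/5}n/n^{1/5}$ comes from, via Lemma~\ref{lem:chernoff} with $k$ of order $1/p$ and $f$ small.

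\textbf{Case (i).} For constant $p$, the robber must get $|U|\le (1+o(1))pn$. He therefore needs the cop to be unable to block even when $|U|$ is only slightly more than $pn$. A covering argument suggests the cop effectively guards about $N[c]$, which explains the constant $1$.

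\textbf{Main obstacle.} The hardest step is the uniform-over-$U$ concentration argument together with the precise two-step dodge. I would first try the case analysis above, and add a third lookahead step if the cop's move can cover too much.
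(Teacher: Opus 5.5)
Your upper bound is correct and is the same as the paper's. The lower bound, however, is not proved, and the step it hinges on is wrong.

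In your ``cleaner formulation'' you claim that a single cop vertex covers only about $p\cdot p|U|$ of the targets in $U\cap N(z')$. That holds for a random-like $U$, but the undamaged set is shaped by the cop's play. If the cop simply sits on a vertex $v$, the undamaged set can be $U=N[v]$. Then $U\cap N(z')\subseteq N[v]$ for \emph{every} $z'$, so one cop vertex covers all targets. The only valid cap is $|N(z')\cap N[c']|\le(1+o(1))p^2n+1$.

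So what your dodge actually needs is this: for every $U$ with $|U|\ge(1+\eps)pn$ and every $r,c$, some safe $z'\in N(r)\setminus N[c]$ has $|U\cap N(z')|>(1+o(1))p^2n+1$. The uniform concentration of $|U\cap N(z')|$ over all sets $U$ that you ask for instead is simply false (take $U$ disjoint from $N(z')$). The correct existential statement is neither formulated nor proved, and the ``lookahead'' paragraph and the covering heuristic for case (i) do not replace it. The threshold also does not come from where you guess. In the paper it is the condition of Lemma~\ref{lem:chernoff} with $k=\log(1/p)/p$ and $f=p$, i.e.\ $np^5\gg\log^2 n$.

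The missing idea is that you never need to control $U$ at all. Your own remark that the safe set $N(r,\bar c)$ may be entirely damaged is exactly what the paper exploits. Look at the position after the robber has done all his damage under optimal play. With the robber on $x_1$ and the cop on $y_1$, every vertex of $N(x_1,\bar y_1)$ is already damaged, since otherwise the robber could step there and exceed $\dmg(G)$. Let the robber keep walking, choosing $x_{i+1}\in N(x_i,\bar x_1,\dots,\bar x_{i-1},\bar y_i)$ against the cop's replies $y_{i+1}$. Then:
\begin{itemize}
\item every set $N(x_i,\bar x_1,\dots,\bar x_{i-1},\bar y_i)$ is damaged;
\item these sets are pairwise disjoint, since members of the $i$th are adjacent to $x_i$ and members of later ones are not;
\item each is determined by $i+1$ vertices, so Lemma~\ref{lem:chernoff}, a union bound over only $n^{O(k)}$ tuples, gives it size at least $(1-f)p(1-p)^in$.
\end{itemize}
Summing over $i\le k$ yields $(1-f)n\big((1-p)-(1-p)^{k+1}\big)$. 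Take $k$ a large constant and $f=n^{-1/3}$ for (i), and the parameters above for (ii).

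Your two-step dodge could also be completed, but only with a genuinely uniform tool that you do not supply. One example is the spectral bound $|e(A,B)-p|A||B||\le p|A\cap B|+O(\sqrt{np|A||B|})$ for all $A,B$. With $A=N(r)\setminus N[c]$, if every $z'\in A$ were blockable then $e(A,U)\le|A|\big((1+o(1))p^2n+1\big)$, which for constant $p$ forces $|U|\le(1+o(1))pn$.
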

\addtocounter{theorem}{-1} 
\endgroup

\begin{proof}
In both cases, the maximum degree of $G(n,p)$ is $(1+o(1))pn$ w.h.p. Hence, the cop can protect the closed neighborhood of a maximum degree vertex, implying $\dmg(G(n,p))\le n-(1+o(1))pn$ w.h.p. This proves the upper bounds in both cases.

As for the robber's side, assume that he uses his optimal strategy and consider a point in the game after he has damaged all the vertices that he can. Denote by $x_1$ the position of the robber, and by $y_1$ the position of the cop. Based on our assumption, all vertices in $N(x_1, \bar{y}_1)$ are damaged, otherwise the robber could move to an undamaged vertex outside of the cop's neighborhood. Then the robber moves to a vertex $x_2$ from this neighborhood, and denote by $y_2$ the vertex the cop moves to next. Similarly, all vertices in $N(x_2, \bar{x}_1, \bar{y}_2)$ are damaged, and the robber can move to one of them next, denote it by $x_3$. As long as these neighborhoods are non-empty, the robber can continue to move further along the path $x_1,x_2, \dots, x_i$; along the way, each $N(x_i, \bar{x}_1,\dots,\bar{x}_{i-1}, \bar{y}_i)$ must be completely damaged. Furthermore, all these neighborhoods are clearly pairwise disjoint. Let $\ell$ be the number of steps the robber can continue this process.
Then $\dmg(G(n,p)) \geq \sum_{i=1}^\ell \left| N(x_i, \bar{x}_1,\dots,\bar{x}_{i-1}, \bar{y}_i)\right|$.

\paragraph{(i)} 
For fixed $p$ let $k$ be an arbitrary fixed  constant and let $f=n^{-1/3}$, so that the conditions of Lemma~\ref{lem:chernoff} are satisfied. Note that since $k$ is constant, we have $(1-f)p(1-p)^in>1$ for all $i\le k$, so $\ell \ge k$ w.h.p.
\[
\sum_{i=1}^k\left| N(x_i, \bar{x}_1,\dots,\bar{x}_{i-1}, \bar{y}_i)\right|\ge \sum_{i=1}^k (1-f)p(1-p)^{i}n=(1-f)pn\frac{(1-p)-(1-p)^{k+1}}{p}.
\]
As $k$ grows, $(1-p)^{k+1}$ tends to zero, and thus the left hand side is at least $n-(1+o(1))pn$ w.h.p.

\paragraph{(ii)} Let $k=\frac{\log\frac{1}{p}}{p}$ and let $f=p$. Note that $(1-p)^k = p(1+o(1))$. Now, the condition of Lemma~\ref{lem:chernoff} is 
\[np^4(1+o(1)) \gg \frac{\log\frac{1}{p}}{p} \log \left(\frac{enp}{\log\frac{1}{p
}} \right). \]
Using our assumption on $p$, we have $np^5(1+o(1))\gg \log^2 n$, and $\log\frac{1}{p} \log \left(\frac{enp}{\log\frac{1}{p}}\right) = O(\log^2 n)$, so the condition of the lemma is satisfied.
Now for all $i\le k$ we have 
\[(1-f)p(1-p)^in=p(1-p)^{i+1}n > p(1-p)^{k+1}n=(1-o(1))p^2n>1,\]
so $\ell \ge k$ w.h.p.

Hence, having in mind that $(1-p)^k = p(1+o(1))$, we have
\[\begin{split}
\sum_{i=1}^k\left| N(x_i, \bar{x}_1,\dots,\bar{x}_{i-1}, \bar{y}_i)\right| & \ge \sum_{i=1}^k (1-f)p(1-p)^{i}n \\ &=pn(1-p)^2\sum_{i=0}^{k-1}(1-p)^i\\ &=n(1-p)^2\big(1-(1-p)^{k}\big)\\ &=n(1-p)^3(1-o(1))\\ &=n-3np+O(np^2),
\end{split} \]
implying the desired lower bound.
\end{proof}

\section{\textsc{Damage Number} is {\sf PSPACE}-complete} \label{s:complexity}

In this section, we explore the computational complexity of computing the damage number of a graph.  More precisely, we consider the following decision problem:\\

\noindent \textsc{Damage Number}: Given a graph $G$ and positive integer $N$, is $\dmg(G)$ at most $N$?\\

Our main result in this section is that \textsc{Damage Number} is {\sf PSPACE}-complete.  It is straightforward to show that the problem belongs to {\sf PSPACE}. Indeed, there are $O(n^2)$ possible configurations for the player's positions, and under optimal play, if a configuration repeats before the end of the game, then at least one more vertex must have been damaged between any two repetitions. Therefore the length of the game is at most $O(n^3)$ and so the game is in {\sf PSPACE} (e.g., because it can be solved in polynomial space using a standard recursive backtracking algorithm).

The rest of this section is dedicated to proving that {\sc Damage Number} is {\sf PSPACE}-hard.  We prove this by reducing from the standard problem {\sc 3-QBF}:\\

\noindent \textsc{3-QBF}: Given variables $X_1, \dots, X_n$ and a boolean formula $\varphi(X_1, \dots, X_n)$ in 3-CNF form, is the quantified expression 
\[\exists X_1 \, \forall X_2 \, \exists X_3 \, \dots \, \varphi(X_1, \dots, X_n)\]
true?\\

Recall that \textit{3-CNF form} means that the formula $\varphi$ consists of the conjunction of multiple \textit{clauses}, each of which is the disjunction of at most three \textit{literals} (i.e., $x_i$ or $\overline{x}_i$).  \textsc{3-QBF} is well-known to be {\sf PSPACE}-complete~\cite{GareyJohnson}.  
\textsc{3-QBF} is often viewed as a game between two players, Satisfier and Falsifier.  The players take turns: first Falsifier chooses a value for $X_1$, then Satisfier chooses a value for $X_2$, then Falsifier chooses a value for $X_3$, and so on.  Once values have been chosen for all variables, Satisfier wins the game if $\varphi(X_1, \dots, X_n)$ is true, while Falsifier wins if $\varphi(X_1, \dots, X_n)$ is false.  From this viewpoint, \textsc{3-QBF} is the problem of deciding whether or not Satisfier wins the game.

\subsection{Building the reduction}
\label{sub:reduc}
Given an instance of {\sc 3-QBF} with $n$ variables $X_1,\dots,X_n$ and $m$ clauses $C_1,\dots,C_m$, we build an instance $(G=(V,E),N)$ of {\sc Damage Number}.
We let $N = 2nm+4$ and the graph $G$ is as follows.

The vertex set $V$ consists of several groups of vertices, namely $c$-vertices,  $x$-vertices, $y$-vertices, $z$-vertices and path-vertices. We will introduce these gradually, one group at a time, while simultaneously adding to $E$ edges incident to vertices in the group that is being introduced, and at times also to vertices that were previously introduced. The reader can refer to Figure~\ref{fig:reduc_G} for a general representation of the graph $G$ and of the relations between its different types of vertices.

\begin{figure}[hb]
    \centering
    \begin{tikzpicture}
        \foreach \m in {1,...,5}{
            \node[v] (y\m) at (0,\m*0.5){};
            \node[v] (z\m) at (0.5,\m*0.5){};
            \draw (y\m)--(z\m);
            \node[v] (-y\m) at (1,\m*0.5){};
            \node[v] (-z\m) at (1.5,\m*0.5){};
            \draw (-y\m)--(-z\m);
        }
        \foreach \m in {1,...,4}{
            \pgfmathtruncatemacro{\n}{int(\m +1)}
            \draw(y\m)--(y\n);
            \draw(y\m)--(z\n);
            \draw(z\m)--(y\n);
            \draw(z\m)--(z\n);
            \draw(-y\m)--(-y\n);
            \draw(-y\m)--(-z\n);
            \draw(-z\m)--(-y\n);
            \draw(-z\m)--(-z\n);
        }
        \draw [decorate,decoration={brace,amplitude=10pt,raise=4pt}] (1.65,2.5)-- (1.65,0.5)node[right=12pt,midway]{$Y$};
        
        \node[v] (c') at (0.75,0){};
        \node[v] (c'') at (0.75,-0.5){};
            \node[right] at (c') {$c'$};
            \node[right] at (c'') {$c''$};
        \node[v] (c1') at (0.25,-1){};
        \node[v] (c2') at (0.75,-1){};
        \node[v] (c3') at (1.25,-1){};
        \draw (y1) -- (c') -- (z1);
        \draw (-y1) -- (c') -- (-z1);
        \draw (c') -- (c'');
        \draw (c'') -- (c1');
        \draw (c'') -- (c2');
        \draw (c'') -- (c3');
        \draw (c') -- (-0.5,0.12);        
        \draw (c') -- (-0.5,0);        
        \draw (c') -- (-0.5,-0.12);        
        \draw (c'') -- (-0.5,-0.64);
        \draw (c'') -- (-0.5,-0.52);        
        \draw (c'') -- (-0.5,-0.4);        
        
        \draw [decorate,decoration={brace,amplitude=10pt,raise=4pt}] (1.3,-1.05)-- (0.2,-1.05)node[below=12pt,midway]{$C'$};

        \draw [decorate,decoration={brace,amplitude=10pt,raise=4pt}] (-3.75,-1)-- (-3.75,2.5)node[left=12pt,midway]{Clique};
        \draw (-3,0.75) ellipse (0.75 and 1.75);
        \begin{scope}
            \clip (-3,0.75) ellipse (0.75 and 1.75);
            \draw (-5,0.5) -- (-1,0.5);
            \node at (-3,1.5){$X$};
            \node[v] (c) at(-3,0.25){};
            \node[right] at (c) {$c$};
            \node at (-3,-0.4){$C$};
        \end{scope}

        \node[v] (x) at (-2.25,0.75){};
        \node[below right = -5pt and 0 pt] at (x){$x_i^j$};
        \draw (x) -- (-1,1.25);
        \draw (x) -- (-1,1.5);
        \draw (x) -- (-1,1.75);

        \node[draw,text width=10mm] (uv) at (-1.125,3.5){$u$-, $v$-paths};
        \node[draw,text width=10mm] (ab) at (-1.125,-1.8){$a$-, $b$-paths};

        \draw (uv) -- (-0.5,2.75);
        \draw (uv) -- (-0.25,2.75);
        \draw (uv) -- (0,2.75);
        \draw (uv) -- (-1.75,2.75);
        \draw (uv) -- (-2,2.75);
        \draw (uv) -- (-2.25,2.75);

        \draw (ab) -- (-0.1,-1.35);
        \draw (ab) -- (-0.1,-1.1);
        \draw (ab) -- (-0.1,-0.85);
        \draw (ab) -- (-1.85,-0.75);
        \draw (ab) -- (-2.1,-0.75);
        \draw (ab) -- (-2.35,-0.75);
        
    \end{tikzpicture}
    \caption{General representation of $G$}
    \label{fig:reduc_G}
\end{figure}


\subsubsection*{$c$-vertices}

For every $j \in [m]$, we have a vertex $c_j$ in $V$. The set of these vertices is called $C$. 

For every $j \in [m]$, we have a vertex $c_j'$ in $V$. The set of these vertices is called $C'$. 

The vertices of $C$ and $C'$ form an anti-matching in $G$, with $\{c_j,c_j'\}$ being the anti-edges.

There are also vertices $c,c',c''\in V$ such that all three of them are connected by an edge to every vertex in $C$, $\{c',c''\}$ is an edge, and $c''$ is connected by an edge to every vertex in $C'$. 


\subsubsection*{$x$-vertices}

For every $i \in [n]$ with $i\equiv 0 \pmod{2}$, and every $j \in [m]$, we have a vertex $x_i^j$ in $V$.

For every $i \in [n]$ with $i\equiv 1 \pmod{2}$, and every $j \in [m-1]$, we have a vertex $x_i^j$ in $V$. Morevover, we also have the vertices $x_i^m,\overline{x}_i^m$.

The set of all these vertices is denoted by $X$. Morevover, $X\cup C\cup \{c\}$ is a clique in $G$. There is also an edge between each vertex of $X$ and each vertex of $C'$.

These vertices form the set of vertices in which the cop will move during the first phase of the game. Loosely, under optimal play, the cop will begin on $x_1^1$ and move from there to $x_1^2$, $x_1^3$, etc. before proceeding to the $x_2^j$, then the $x_3^j$, and so on.  The cop's precise path will correspond to a choice of values for the universally quantified variables in the \textsc{3-QBF} instance; in particular, the choice of whether to visit $x_i^m$ or $\overline{x}_i^m$ will correspond to a choice of value for $X_i$.

\subsubsection*{$y$-vertices and $z$-vertices}

For every $i \in [n]$ and every $j \in [m]$ we have vertices $y_i^j,\overline{y}_i^j,z_i^j,\overline{z}_i^j$ in $V$. We denote  
$Y_i^j:=\{y_i^j,\overline{y}_i^j,z_i^j,\overline{z}_i^j\}$, and we refer to the $Y_i^j$ as \textit{layers}; we order them in increasing lexicographical order of the pairs of their indices $(i,j)$. 
By $Y$ we denote the union of all $Y_i^j$.

For every $i \in [n]$ and $j \in [m]$, the edges $\{y_i^j,z_i^j\},\{\overline{z}_i^j,\overline{y}_i^j\}$ are in $E$.

For every $i \in [n]$ and $j \in [m-1]$, we add to $E$ the edges of the complete bipartite graph between $\{y_i^j,z_i^j\}$ and $\{y_i^{j+1}, z_i^{j+1}\}$, and also between $\{\overline{y}_i^j,\overline{z}_i^j\}$ and $\{\overline{y}_i^{j+1},\overline{z}_i^{j+1}\}$. For every $i \in [n-1]$, we also add to $E$ the edges of the complete bipartite graph between $Y_i^m$ and $Y_{i+1}^1$ (see Figure~\ref{fig:reduc_Y} for a representation of some $y$-vertices and $z$-vertices, and the edges between them).

\begin{figure}[hb]
    \centering
    \begin{tikzpicture}[scale = 1.5]
        \node[v] (y11) at (0,0){};
        \node[v] (y12) at (0,-0.5){};
        \node[v] (y13) at (0,-1){};
        
        \node[v] (z11) at (0.5,0){};
        \node[v] (z12) at (0.5,-0.5){};
        \node[v] (z13) at (0.5,-1){};
        
        \node[v] (-y11) at (1.5,0){};
        \node[v] (-y12) at (1.5,-0.5){};
        \node[v] (-y13) at (1.5,-1){};
        
        \node[v] (-z11) at (2,0){};
        \node[v] (-z12) at (2,-0.5){};
        \node[v] (-z13) at (2,-1){};
        
        \node[v] (y21) at (0,-1.75){};
        \node[v] (y22) at (0,-2.25){};
        \node[v] (y23) at (0,-2.75){};
        
        \node[v] (z21) at (0.5,-1.75){};
        \node[v] (z22) at (0.5,-2.25){};
        \node[v] (z23) at (0.5,-2.75){};
        
        \node[v] (-y21) at (1.5,-1.75){};
        \node[v] (-y22) at (1.5,-2.25){};
        \node[v] (-y23) at (1.5,-2.75){};
        
        \node[v] (-z21) at (2,-1.75){};
        \node[v] (-z22) at (2,-2.25){};
        \node[v] (-z23) at (2,-2.75){};

        \node[above left = -5pt and 0pt] at (y11) {$y_1^1$};
        \node[above left = -5pt and 0pt] at (y12){$y_1^2$};
        \node[above left = -5pt and 0pt] at (y13){$y_1^3$};
        \node[below left = -5pt and 0pt] at (y21){$y_2^1$};
        \node[below left = -5pt and 0pt] at (y22){$y_2^2$};
        \node[below left = -5pt and 0pt] at (y23){$y_2^3$};
        \node[above left = -5pt and 0pt] at (-y11){$\overline{y}_1^1$};
        \node[above left = -5pt and 0pt] at (-y12){$\overline{y}_1^2$};
        \node[above left = -5pt and 0pt] at (-y13){$\overline{y}_1^3$};
        \node[below left = -5pt and 0pt] at (-y21){$\overline{y}_2^1$};
        \node[below left = -5pt and 0pt] at (-y22){$\overline{y}_2^2$};
        \node[below left = -5pt and 0pt] at (-y23){$\overline{y}_2^3$};
        
        \node[above right = -5pt and 0pt] at (z11){$z_1^1$};
        \node[above right = -5pt and 0pt] at (z12){$z_1^2$};
        \node[above right = -5pt and 0pt] at (z13){$z_1^3$};
        \node[below right = -5pt and 0pt] at (z21){$z_2^1$};
        \node[below right = -5pt and 0pt] at (z22){$z_2^2$};
        \node[below right = -5pt and 0pt] at (z23){$z_2^3$};
        \node[above right = -5pt and 0pt] at (-z11){$\overline{z}_1^1$};
        \node[above right = -5pt and 0pt] at (-z12){$\overline{z}_1^2$};
        \node[above right = -5pt and 0pt] at (-z13){$\overline{z}_1^3$};
        \node[below right = -5pt and 0pt] at (-z21){$\overline{z}_2^1$};
        \node[below right = -5pt and 0pt] at (-z22){$\overline{z}_2^2$};
        \node[below right = -5pt and 0pt] at (-z23){$\overline{z}_2^3$};

        \draw (y11)--(z11);
        \draw (y12)--(z12);
        \draw (y13)--(z13);
        \draw (-y11)--(-z11);
        \draw (-y12)--(-z12);
        \draw (-y13)--(-z13);
        \draw (y21)--(z21);
        \draw (y22)--(z22);
        \draw (y23)--(z23);
        \draw (-y21)--(-z21);
        \draw (-y22)--(-z22);
        \draw (-y23)--(-z23);

        \draw (y11)--(y12);
        \draw (y11)--(z12);
        \draw (z11)--(y12);
        \draw (z11)--(z12);
        \draw (y12)--(y13);
        \draw (y12)--(z13);
        \draw (z12)--(y13);
        \draw (z12)--(z13);
        
        \draw (y21)--(y22);
        \draw (y21)--(z22);
        \draw (z21)--(y22);
        \draw (z21)--(z22);
        \draw (y22)--(y23);
        \draw (y22)--(z23);
        \draw (z22)--(y23);
        \draw (z22)--(z23);
        
        \draw (-y11)--(-y12);
        \draw (-y11)--(-z12);
        \draw (-z11)--(-y12);
        \draw (-z11)--(-z12);
        \draw (-y12)--(-y13);
        \draw (-y12)--(-z13);
        \draw (-z12)--(-y13);
        \draw (-z12)--(-z13);
        
        \draw (-y21)--(-y22);
        \draw (-y21)--(-z22);
        \draw (-z21)--(-y22);
        \draw (-z21)--(-z22);
        \draw (-y22)--(-y23);
        \draw (-y22)--(-z23);
        \draw (-z22)--(-y23);
        \draw (-z22)--(-z23);

        \draw (y13)--(y21);
        \draw (y13)--(z21);
        \draw (y13)--(-y21);
        \draw (y13)--(-z21);
        
        \draw (z13)--(y21);
        \draw (z13)--(z21);
        \draw (z13)--(-y21);
        \draw (z13)--(-z21);
        
        \draw (-y13)--(y21);
        \draw (-y13)--(z21);
        \draw (-y13)--(-y21);
        \draw (-y13)--(-z21);
        
        \draw (-z13)--(y21);
        \draw (-z13)--(z21);
        \draw (-z13)--(-y21);
        \draw (-z13)--(-z21);
        
    \end{tikzpicture}
    \caption{$y$-vertices and $z$-vertices if $n=2$ and $m=3$}
    \label{fig:reduc_Y}
\end{figure}

Furthermore, $c$ is connected by an edge to every vertex of $Y$ and $c'$ is connected by an edge to every vertex of $Y_n^m$. For all $i,i'\in[n]^2$ and $j,j' \in [m]^2$, if $(i,j)$ is larger than $(i',j')$ under the lexicographic ordering, then $x_i^j$ and $\overline{x}_i^j$ (if it exists) are connected by an edge to every vertex of $Y_{i'}^{j'}$ .

The $y$-vertices and $z$-vertices together form the set of vertices on which the robber will move during the first phase of the game. Morevover, they also correspond assignments for the existentially-quantified variables in the \textsc{3-QBF} instance. If the robber goes to the vertices $\{y_i^j,z_i^j\}$ it corresponds to the variable $X_i$ being set to true, and if the robber goes to the vertices $\{\overline{y}_i^j,\overline{z}_i^j\}$ it corresponds to the variable $X_i$ being set to false.

If $X_i$ is a literal appearing in the clause $C_j$ of the \textsc{3-QBF} instance, then $\overline{y}_i^j$ is connected by an edge to $c_j$, and $\overline{z}_i^j$ is connected by an edge to $c_j'$. If, however, $\overline{X_i}$ is a literal in $C_j$, then there is an edge between $y_i^j$ and $c_j$, and there is an edge between $z_i^j$ and $c_j'$.

If $i \equiv 1 \pmod{2}$ and $i< n$, then there are edges from $x_i^m$ to both $\overline{y}_{i+1}^1$ and $\overline{z}_{i+1}^1$, and there are edges from $\overline{x}_i^m$ to both $y_{i+1}^1$ and $z_{i+1}^1$.

The vertices of $X$ are connected to all vertices of $Y$ with a lower lexicographical value of $(i,j)$; this prevents the robber from going back to a vertex with a smaller pair of indices as long as the cop stays in $X$.

\subsubsection*{Path vertices}

For every $k \in [N]$, we have a vertex $u^0_k$ in $V$.

For every $i \in [n]$, $j \in [m]$, $k \in [N-1]$, we have vertices $u_k^{i,j}$ and $v_k^{i,j}$ in $V$. 
For every $k \in [N-1]$, we have vertices $a_k'$, $b_k'$, $a_k''$ and $b_k''$ in $V$, and for every $j\in [m]$, $k\in [N-1]$, we have vertices $a^j_k$ and $b^j_k$ in $V$.

Each of the sequences of vertices $(u^0_k)_k$, $(u_k^{i,j})_k$, $(v_k^{i,j})_k$, $(a_k')_k$, $(b_k')_k$, $(a_k'')_k$, $(b_k'')_k$, $(a^j_k)_k$ and $(b^j_k)_k$ induces a path in $G$, with the edges connecting vertices whose $k$-index differs by one.

Moreover, there is an edge between $u_1^0$ and $x_1^1$, and both $a_1'$ and $b_1'$ are connected by an edge to both $c$ and $c'$. Both $a_1''$ and $b_1''$ are connected by an edge to $c''$, and they are connected by an edge to every vertex of $C$ and every vertex of $C'$.

For $i \in [n]$ and $j \in [m]$, both $u_1^{i,j}$ and $v_1^{i,j}$ are connected to every vertex of $Y_i^j$, to $x_i^j$, and to $\overline{x}_i^j$ if it exists (see Figure~\ref{fig:reduc_path} for a representation of a $u$-path and a $v$-path and their edges to $x$-vertices, $y$-vertices and $z$ vertices).

For $j \in [m]$, $a^j_1$ and $b^j_1$ are connected to $c_j$ and $c_j'$.

All these path-vertices are used to force the moves of the cop -- if the robber manages to get to the starting vertex of one of these paths without getting caught, then he can traverse it and damage enough vertices to win the game. Therefore, the cop needs to guard the starting vertices of the paths to prevent that from happening.

\begin{figure}
    \centering
    \begin{tikzpicture}[scale = 1.5]
        \node[v] (y) at (0,0){};
        \node[v] (z) at (0.5,0){};
        \node[v] (-y) at (1,0){};
        \node[v] (-z) at (1.5,0){};
        \node[v] (x) at (-0.5,0){};
        
        \node[v] (u1) at (0.25,-0.75){};
        \node[v] (u2) at (0.25,-1.25){};
        \node[v] (uN) at (0.25,-2){};
        
        \node[v] (v1) at (0.75,-0.75){};
        \node[v] (v2) at (0.75,-1.25){};
        \node[v] (vN) at (0.75,-2){};
        
        \draw (y)--(z);
        \draw (-y)--(-z);
        \draw (u1)--(x)--(v1);
        \draw (u1)--(y)--(v1);
        \draw (u1)--(z)--(v1);
        \draw (u1)--(-y)--(v1);
        \draw (u1)--(-z)--(v1);
        \draw (u1)--(u2)--(0.25,-1.4);
        \node at (0.25,-1.525){.};
        \node at (0.25,-1.625){.};
        \node at (0.25,-1.725){.};
        \draw (uN)--(0.25,-1.85);
        \draw (v1)--(v2)--(0.75,-1.4);
        \node at (0.75,-1.525){.};
        \node at (0.75,-1.625){.};
        \node at (0.75,-1.725){.};
        \draw (vN)--(0.75,-1.85);

        \node[above] at (x) {$x_i^j$};
        \node[above] at (y) {$y_i^j$};
        \node[above] at (z) {$z_i^j$};
        \node[above] at (-y) {$\overline{y}_i^j$};
        \node[above] at (-z) {$\overline{z}_i^j$};

        \node[left] at (u1){$u_1^{i,j}$};
        \node[left] at (u2){$u_2^{i,j}$};
        \node[left] at (uN){$u_{N-1}^{i,j}$};
        
        \node[right] at (v1){$v_1^{i,j}$};
        \node[right] at (v2){$v_2^{i,j}$};
        \node[right] at (vN){$v_{N-1}^{i,j}$};
        
    \end{tikzpicture}
    \caption{Path vertices and their connections for some arbitrary $i$, $j$}
    \label{fig:reduc_path}
\end{figure}

\subsection{Regular play}

We now analyze the game on $G$.  In what follows, we say that the robber ``wins'' the game if he is able to damage at least $N$ vertices; otherwise, the cop wins.

In this subsection we describe \textit{regular play}, i.e. how the game proceeds when both players play ``as expected''. Initially, the cop starts at $x_1^1$, and the robber starts in $Y_1^1$. Then, the cop stays in their position and the robber goes to the adjacent vertex in $Y_1^1$. After that, the robber goes to the following layer.

Each time the robber reaches a new layer $Y_i^j$, the cop responds by going to $x_i^j$ or $\overline{x}_i^j$ (if it exists).
As their following move, the robber goes to the adjacent vertex in the same layer, while the cop does not move. This completes the \textit{phase} corresponding to layer $Y_i^j$.

Once a phase is complete, the robber proceeds to the following layer, another phase starts, and this proceeds all the way to the last layer.

After the phase on the last layer is complete, the robber goes to $c'$, and the cop responds by going to $c$. The robber then goes to $c''$, the cop goes to a vertex $c_j$ of $C$, and the robber proceeds to $c'_j$. 
The cop stays put for a round, while the robber goes to a vertex $z_i^j$.
Then the cop goes to $y_i^j$.

At this point we consider regular play to be over.

\begin{lem}\label{lem:regular_play}
    If both the cop and the robber follow regular play, then the robber has a winning strategy on $G$ if and only if Satisfier has a winning strategy for the underlying \textsc{3-QBF} instance. Moreover, if the robber cannot win within regular play, then he also cannot win by continuing to play after the end of regular play.
\end{lem}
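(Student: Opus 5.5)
The proof will set up a correspondence between regular play and the \textsc{3-QBF} game, and then do a damage count at the end of regular play. Under regular play the robber visits both vertices of exactly one pair in each layer $Y_i^j$: either $\{y_i^j,z_i^j\}$ or $\{\overline{y}_i^j,\overline{z}_i^j\}$. This gives $2nm$ damaged vertices from the layers. Later he also damages $c'$, $c''$ and some $c'_j$, and possibly one more vertex $z_i^j$.

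First we show that the choices inside a block $i$ are forced to be consistent. Within block $i$, the only edges between consecutive layers join $\{y,z\}$ to $\{y,z\}$ and $\{\overline{y},\overline{z}\}$ to $\{\overline{y},\overline{z}\}$. So once the robber picks a side in $Y_i^1$, he keeps it for all $j$. We interpret this side as the value of $X_i$.

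Next we identify who makes each choice. For even $i$ the cop has only one option $x_i^j$, and the robber freely chooses the side when he enters $Y_{i+1}^1$ or the first layer of the block. This models Satisfier choosing $X_i$; I will need to check the paper's indexing of quantifiers, with odd blocks universal. For odd $i$, the cop must choose between $x_i^m$ and $\overline{x}_i^m$ at the last layer of block $i$. The edges from $x_i^m$ to $\overline{y}_{i+1}^1,\overline{z}_{i+1}^1$, and symmetrically from $\overline{x}_i^m$, mean that the cop's choice forbids one side of the next layer. This forces the robber's value for $X_{i+1}$. So the cop's choice plays the role of Falsifier's choice, and the robber's choice in the other blocks plays the role of Satisfier's. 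During all of this, the cop sitting on $x_i^j$ guards $u_1^{i,j}$, $v_1^{i,j}$ and all earlier layers. This is why she is forced to move there.

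The endgame analysis comes next. After the last layer the robber goes to $c'$ and the cop goes to $c$. Then the robber goes to $c''$, and the cop must choose some $c_j\in C$; she has to guard $a_1'',b_1''$ and the $C$ and $C'$ vertices. The robber then goes to $c'_j$, which is non-adjacent to $c_j$. He now wants to continue to some $z_i^j$ adjacent to $c'_j$ and still undamaged, without being caught.

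By construction, $c'_j$ is adjacent to the $z$-vertex of each literal of $C_j$ on the side opposite to that literal's satisfying value. If the assignment satisfies $C_j$, then for some literal the robber travelled the satisfying side in layer $j$. The opposite $z$-vertex is then undamaged and reachable, giving one extra damaged vertex. Here the count has to come out to exactly $N=2nm+4$: $2nm$ layer vertices, plus $c'$, $c''$, $c'_j$, plus the new $z$. If $\varphi$ is false, the cop picks $c_j$ for a falsified clause. Then every $z$-neighbour of $c'_j$ is already damaged, and the robber reaches only $2nm+3$.

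The main obstacle is the "moreover" part: showing that after regular play ends with the cop on $y_i^j$, the robber cannot damage any further vertex. I would check the neighbourhood of the robber's final position, which is $c'_j$ or the adjacent $z$. Its new neighbours are $c''$, $C$-vertices, $a_1^j$, $b_1^j$ and $Y$-vertices, and the cop's move to $y_i^j$ or $c_j$ dominates them. I would then argue by a local invariant that the cop can shadow the robber so that every vertex he can reach safely is already damaged. I expect the careful case check on the path starts $a_1^j,b_1^j$ and on the $c$-vertices to be the delicate step.
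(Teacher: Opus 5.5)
Your proposal follows the paper's proof: the same encoding of choices, the same count $2nm+4$ versus $2nm+3$ once the cop has picked $c_j$, and, for the ``moreover'' part, exactly your shadowing invariant. On the indexing you flagged, the robber's choice of side at $Y_1^1$ and on leaving an even block sets the existential odd-indexed variables, while the cop's choice of $x_i^m$ or $\overline{x}_i^m$ for odd $i$ fixes the universal $X_{i+1}$. Your parenthetical ``odd blocks universal'' and ``Satisfier choosing $X_i$'' are index slips and should read this way.

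The ``moreover'' step is shorter than you fear. Since $y_i^j$ and $z_i^j$ have the same neighbours apart from $c_j$ versus $c_j'$, a robber on $z_i^j$ facing the cop on $y_i^j$ can only go to $c_j'$. The cop then returns to $c_j$, which dominates every neighbour of $c_j'$ (including $c''$, $X$, $a_1''$, $b_1''$, $a_1^j$, $b_1^j$) except the already damaged $z$-vertices of the literals of $C_j$. She answers a move to one of those by going to the matching $y$-vertex, so play simply cycles between these two positions.
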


\begin{proof}

    Note that the only moments where players have choices during regular play are the following: 
    \begin{itemize}[noitemsep,topsep=5pt,parsep=5pt,partopsep=5pt]
        \item in the first move, the robber chooses between $y^1_1$ and $\overline{y}^1_1$ (note that starting on $z_1^1$ or $\overline{z}_1^1$ would be equivalent since we are under normal play and the cop will pass their next move);
        \item for $i$ even, the robber chooses between moving from the layer $Y_{i}^m$ to either $y_{i+1}^1$ or $\overline{y}_{i+1}^1$ (note that going to $z_{i+1}^1$ or $\overline{z}_{i+1}^1$ would be equivalent);
        \item for $i$ even, the cop chooses between moving from $x_i^{m-1}$ to either $x_i^m$ or $\overline{x}_i^m$;
        \item when moving to $c$-vertices, the cop chooses which $c_j$ to move to, for some $j\in [m]$; and
        \item in their last move, the robber chooses some vertex $z_i^j$ or $\overline{z}_i^j$ to move to. 
    \end{itemize}
    
    Since these are the only situations where a choice is made, these are the only moves that we will consider in the following. We note, however, that whenever the robber enters a vertex $y_i^1$, they damage all the vertices of the form $y_i^j$ and $z_i^j$ and leave all the vertices of the form $\overline{y}_i^j$ and $\overline{z}_i^j$ undamaged, and the opposite happens if the robber enters the vertex $\overline{y}_i^j$. 

    Let us first show that if Satisfier has a winning strategy $\mathcal S$ in the \textsc{3-QBF} game, then the robber has a winning strategy in the game on $G$ if both players follow regular play. As Satisfier plays first in \textsc{3-QBF}, they are the player choosing the value of $X_1$. If, according to $\mathcal S$, Satisfier chooses $X_1$ to be true, then the robber starts on $y_1^1$, otherwise they start on $\overline{y}_1^1$. During the rest of the game, whenever the cop moves to the vertex $x_i^m$, the robber will have to move to $y_{i+1}^1$ and he will consider that Falsifier set the variable $X_{i+1}$ to true in the \textsc{3-QBF} game, and whenever the cop moves to vertex $\overline{x}_i^m$, the robber will have to move to $\overline{y}_{i+1}^1$ and will consider that Falsifier set the variable $X_{i+1}$ to false. When the robber has to make a choice between moving to the vertex $y_i^1$ or the vertex $\overline{y}_i^1$, they observe the value that Satisfier sets the variable $X_i$ to, according to $\mathcal S$, and they move to $y_i^1$ if it is set to true, and to $\overline{y}_i^1$ otherwise.
    
    Note that after the robber has finished moving through the $y$-vertices, if some variable $X_i$ is set to true then the vertices $\overline{y}_i^j$ and $\overline{z}_i^j$ for all $j$ will remain undamaged, and if $X_i$ is set to false then the vertices $y_i^j$ and $z_i^j$ for all $j$ will remain undammaged.

    Let $c_j$ be the vertex that the cop chose to move to towards the end of the game, when stepping onto the $c$-vertices.
    Since $\mathcal S$ is a winning strategy for Satisifer, the clause $C_j$ is satisified in the 3-QBF game. Let $X_i$ be a variable that satisfies $C_j$. Recall that if $X_i$ appears positively in $C_j$ then $c_j'$ is connected to $\overline{z}_i^j$ in $G$, and if it appears negatively then $c_j'$ is connected to $z_i^j$. If $X_i$ appears positively in $C_j$ then it is set to true and $c_j'$ is connected to $\overline{z}_i^j$, which was previously undamaged, and the last move of the robber is to move to this vertex. If $X_i$ appears negatively then it is set to false and the robber can move from $c_j'$ to the undamaged vertex $z_i^j$.

    It remains to count how many vertices the robber damages during this game. For each $i \in [n]$, they either damage the $y_i^j$ and $z_i^j$ vertices for all $j$, or the $\overline{y}_i^j$ and $\overline{z}_i^j$ vertices, for all $j$. That is $2nm$ vertices altogether, when summed up for all $i$. The robber also damages $c'$, $c''$ and $c'_j$. Lastly, since $z_i^j$ or $\overline{z}_i^j$ was previously undamaged, and since in their following move the cop does not capture the robber, this vertex is also damaged. In total, the robber damages $N=2nm+4$ vertices and wins the game. \medskip

    We proceeed to proving the opposite direction, showing that if Falsifier has a winning strategy $\mathcal S$ in the \textsc{3-QBF} game, then the cop has a strategy to prevent the robber from damaging $N$ vertices in $G$, if both players follow regular play. For $i$ odd, including $i=1$, whenever the robber chooses to move to some vertex $y_i^1$, the cop considers that Satisfier set the variable $X_i$ to true, and if the robber moves to $\overline{y}_i^1$, the cop considers that Satisfier set the variable $X_i$ to false. When the cop has to choose between moving to $x_i^m$ or $\overline{x}_i^m$, they consider what value the strategy $\mathcal S$ assigns to $X_{i+1}$. If true is assigned to $X_{i+1}$ then the cop moves to $x_i^j$, forcing the robber to move to $y_i^j$, and otherwise the cop moves to $\overline{x}_i^j$, forcing the robber to move to $\overline{y}_i^j$.

    After the robber has finished moving through the $y$-vertices, for every $i$, if $X_i$ is set to true both $y_i^j$ and $z_i^j$ will be damaged for all $j$, and if $X_i$ is set to false both $\overline{y}_i^j$ and $\overline{z}_i^j$ will be damaged for all $j$.

    Since $\mathcal S$ is a winning strategy for Falsifier, there exists an unsatisfied clause $C_j$ and the cop will move to the vertex $c_j$, thus forcing the robber to move to $c_j'$. Since the clause $C_j$ is unsatisfied, all the $y$-vertices and $z$-vertices accessible from $c_j'$ are already damaged. Let $i$ be such that the robber moves to $z_i^j$ or $\overline{z}_i^j$. The cop concludes regular play by moving to $y_i^j$ or $\overline{y}_i^j$.

    Similarly to the previous case, the robber damaged $2mn$ vertices while going through $Y$ in the first phase of the game, and damaged three more vertices $c'$, $c''$ and $c_j'$. However, since the last move of the robber was to a vertex that was already damaged, only $N-1$ vertices are damaged at this point.

    All that remains to be shown is that no matter what the robber does after regular play is over, they cannot damage any other vertices. At the end of regular play, the only vertex that is a neighbor of the robber but not of the cop is $c_j'$ and therefore the robber has to move to this vertex if they do not want to be captured in the following move of the cop. The cop can then just go back to $c_j$. At this point all the vertices that are neighbors of $c_j'$ but not of $c_j$ are the vertices $z_i^j$ and $\overline{z}_i^j$ corresponding to variables $X_i$ appearing in $C_j$. However, as explained before, all these vertices are already damaged and if the robber goes to one of them, the cop goes to the corresponding $y$-vertex, recreating the position at the end of regular play. Therefore, the robber cannot damage other vertices and the cop wins the game. 
\end{proof}

\subsection{The cop does not follow regular play}

Here we explore the various ways the cop can deviate from regular play, and we show that doing so is not to their advantage.

\begin{lem}\label{lem:cop_regular}
    If the cop has a winning strategy on $(G,N)$, then they have a winning strategy where they follow regular play.
\end{lem}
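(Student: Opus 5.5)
We argue by contraposition at each decision point of regular play: whenever the cop deviates from regular play, we exhibit a robber response that damages at least $N=2nm+4$ vertices. Hence any winning cop strategy never deviates, and therefore follows regular play. The main tool is the path gadgets. Each $u$-, $v$-, $a$-, $b$-path has $N-1$ vertices, and the $u^0$-path has $N$. So if the robber reaches the first vertex of such a path, say $u_1^{i,j}$, with the cop not adjacent to it, he simply walks down the path. The cop is at distance at least $2$ and cannot catch up on an induced path ending in a dead end before he has damaged $N-1$ path vertices. Together with the vertex he came from, and whatever he damaged earlier, this reaches $N$. I would first record this as a short claim: if, immediately before a robber move, the robber is adjacent to the start of a path gadget whose start is not dominated by the cop's current vertex, then the robber wins.

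Next, I go through the phases in order, using an induction on the layers $Y_i^j$ in lexicographic order. The invariant is that, after the cop's response when the robber enters $Y_i^j$, regular play so far is forced.

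\textbf{Start of the game.} If the cop does not start on $x_1^1$, the robber starts on, or next to, $u_1^0$ (a $u^0$-path of length $N$). Only $x_1^1$ guards it while also allowing later control, so the robber wins by the claim. Alternatively, the robber starts in $Y_1^1$ at a vertex where the cop cannot dominate both $u_1^{1,1}$ and $v_1^{1,1}$.

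\textbf{Entering a layer.} When the robber enters $Y_i^j$, he is adjacent to both $u_1^{i,j}$ and $v_1^{i,j}$. These two vertices have common neighbourhood $Y_i^j\cup\{x_i^j,\overline{x}_i^j\}$. The cop must therefore move to a common neighbour, and being on $Y_i^j$ other than capturing is impossible, since the robber moved there and the cop is in $X$. So she must move to $x_i^j$ or $\overline{x}_i^j$. Here I must check that from the previous $x$-vertex these are reachable (they are, since $X$ is a clique) and that no other vertex dominates both path starts.

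\textbf{Staying put.} Once at $x_i^j$, the $X$-to-lower-layer edges block the robber from returning. If the cop instead moves anywhere, the robber can exploit the still-unguarded path starts or the undamaged twin vertex. So staying is forced. Since the cop wants to minimize damage, staying put is also never worse.

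\textbf{The $c$-phase.} When the robber goes to $c'$, the starts $a_1',b_1'$ are adjacent to $c$ and $c'$. This forces the cop to $c$, as $a_1'$ and $b_1'$ have common neighbourhood $\{c,c'\}$. At $c''$, the starts $a_1'',b_1''$ force her into $C\cup C'$, and moving into $C'$ fails because the robber then moves to a $c_{j}$ not adjacent to her, using the anti-matching. At $c_j'$, the starts $a_1^j,b_1^j$ (common neighbours $c_j,c_j'$) force her to stay at $c_j$. Finally, after the robber moves to $z_i^j$, going to $y_i^j$ is her natural safe response. Any other response lets the robber reach a path start $u_1^{i,j}$, which is possible only if she is not on a vertex of $Y_i^j$ or $x_i^j$, and $y_i^j$ is one of the allowed options.

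The main obstacle is the robber's freedom: deviations of the cop may be combined with deviations of the robber. I handle this by letting the robber respond only with the path-escape moves, which keeps the case analysis local. The delicate step is verifying, for each critical moment, the common-neighbourhood computation showing that the set of vertices dominating both path starts is exactly the regular-play options. This is where I expect to spend the most care, in particular at layer transitions $Y_i^m\to Y_{i+1}^1$ involving the extra edges from $x_i^m,\overline{x}_i^m$. The danger is that some unexpected vertex, for example $c$ adjacent to all of $Y$, might guard both starts. I will check that $c$ is not adjacent to $u_1^{i,j}$, so it does not.
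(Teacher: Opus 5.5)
Your approach matches the paper's: at each decision point the path gadgets leave the cop only the regular-play options. The step that fails is in \emph{Entering a layer}, where you claim the cop cannot stand on a vertex of the newly entered layer other than the robber's. This is false at exactly the transitions you flagged and then did not check. For odd $i<n$, $x_i^m$ is adjacent to $\overline{y}_{i+1}^1,\overline{z}_{i+1}^1$, and $\overline{x}_i^m$ is adjacent to $y_{i+1}^1,z_{i+1}^1$. Every vertex of $Y_{i+1}^1$ is adjacent to both $u_1^{i+1,1}$ and $v_1^{i+1,1}$. So suppose the robber is forced from $Y_i^m$ onto, say, $y_{i+1}^1$ while the cop is on $x_i^m$. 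She can step to $\overline{y}_{i+1}^1$, which is not adjacent to the robber yet dominates both path starts. The robber has no immediate escape, and your check of $c$ does not cover this.

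A cleverer robber reply cannot rescue the argument either, because this deviation can genuinely help the cop. On $\overline{y}_{i+1}^j$ she dominates the same things as the robber on $y_{i+1}^j$ or $z_{i+1}^j$: the path starts, $c$, and the larger-indexed $x$-vertices. She also dominates all of $Y_i^m$ at level $1$, and all of $Y_{i+2}^1$ (or $c'$ if $i+1=n$) at level $m$. If she shadows him level by level in the opposite column, he is left with only:
\begin{itemize}
\item his own column;
\item the vertex $\overline{x}_i^m$, which she answers by returning to $x_i^m$, after which his only safe moves are $y_{i+1}^1,z_{i+1}^1$;
\item the clause vertices at levels $j$ with $\overline{X_{i+1}}\in C_j$.
\end{itemize}
Take $\exists X_1\forall X_2\,(X_1\vee X_2)$, so $n=2$, $m=1$, $N=8$. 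Against a robber who sets $X_1$ true, every regular cop strategy concedes $8$. By contrast, moving from $x_1^1$ to $\overline{y}_2^1$ confines him to $\{y_2^1,z_2^1,\overline{x}_1^1\}$, so at most $5$ vertices are damaged. Going through the remaining robber starts in this small instance, I find the cop keeps the damage at most $7<N$ even though the formula is true. So the lemma as stated fails for the graph as constructed. The paper's proof has the same blind spot: its list of vertices the cop can reach omits the $Y$-vertices adjacent to $X$. A repair needs a change to the gadget, not more careful case checking.

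A smaller slip, also present in the paper, is in your start-of-game case. A cop starting on $u_1^0$ reaches $x_1^1$ in one move, so neither of your robber starts punishes her. The robber should instead start on an $x$-vertex other than $x_1^1$ and run down its $u$-path.
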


\begin{proof}

Let us assume that the cop has a winning strategy on $G$.

Suppose the cop does not start at $x_1^1$. If they start somewhere on the $(u^0_k)_k$ path, then the robber starts at $y_1^1$ and the cop will not be able to stop the robber from safely moving to $u_1^1$, and subsequently damaging $y_1^1$ and all the $N-1$ vertices on the path $u_1^1, \dots, u_{N-1}^1$. If the cop starts somewhere else, then the robber can start at $u_1^0$ and traverse the whole $(u^0_k)_k$ path, thus winning the game. Therefore, we can assume that the cop follows regular play on the first move.

Suppose that the robber is within $Y_i^j$, that it is the cop's turn to play and that both players have followed regular play until this point. Since the cop followed regular play until now, he is either on $x_i^j$, on $\overline{x}_i^j$ (if it exists), or on one of the $x$-vertices immediately preceding these in the regular play order. If the cop does not move to, or stay at, either $x_i^j$ or $\overline{x}_i^j$, then the only vertices that they can reach are: the $x$-vertices, the $c$-vertices, and either $u_1^{i,j}$ and $v_1^{i,j}$, or the $u_1$-vertex and $v_1$-vertices immediately preceding them. If the cop moves to any of them except $u_1^{i,j}$, then the robber can move safely to $u_1^{i,j}$, they will be able to safely damage the $N-1$ vertices of its path, and they will win, because they also damaged one vertex in $Y_i^j$. If the cop moves to $u_1^{i,j}$, then the robber can move to $v_1^{i,j}$ and the situation is similar. Therefore, we can assume that the cop follows regular play until the robber moves to $c'$.

Similarly, if the cop does not follow regular play when the robber is in $\{c',c''\}\cup C'$, then the robber would be able to safely move to the start of an $a$-path or a $b$-path, thus winning the game. Therefore, we can assume that the cop follows regular play until the last move.

On the robber's last move of regular play, since we assumed that the cop has a winning stategy, the robber must not be able to damage another vertex. In this case, we showed in the proof of Lemma~\ref{lem:regular_play} that for the cop, moving to the $y$-vertex opposite of the $z$-vertex occupied by the robber would prevent the robber from damaging more vertices. This completes the proof that if the cop has a winning strategy on $G$, then they have one where they follow regular play.
\end{proof}

\subsection{The robber does not follow regular play}

Here we explore the various ways the robber can deviate from regular play, and show that doing so is not to their advantage.

\begin{lem}\label{lem:robber_regular}
    If the robber has a winning strategy on $(G,N)$, then they have a winning strategy where they follow regular play.
\end{lem}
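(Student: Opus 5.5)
Assume the cop follows regular play; by Lemma~\ref{lem:cop_regular} this costs the cop nothing. The argument then compares any robber deviation with the regular-play continuation. We proceed phase by phase, following the chronology of regular play. At each point where the robber could deviate, we show one of two things: either the deviation lets the cop reach a position where the robber can damage strictly fewer than $N$ vertices in total, or the deviation is equivalent to a regular-play move. The main tool is a count. Each phase on a layer $Y_i^j$ can contribute at most two newly damaged vertices, and the final $c$-phase at most four. A deviation that loses even one of these contributions without opening access to a path-start ($u_1$, $v_1$, $a_1$, $b_1$) leaves the robber short of $N=2nm+4$. So the plan is to show that no deviation gives access to an undefended path-start, and that every deviation forfeits damage.

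\textbf{Opening and the $Y$-phase.} First, the starting vertex. The cop is on $x_1^1$, which is adjacent to all of $X\cup C\cup\{c\}\cup C'$ and to $u_1^0$, $u_1^{1,1}$, $v_1^{1,1}$. Starting anywhere other than $Y_1^1$ (or a far path vertex) leaves the robber trapped in a bounded region. We check that such a region contains fewer than $N$ vertices, or that the cop can walk to the relevant endpoint and capture. Next, inside layer $Y_i^j$ while the cop sits on $x_i^j$ or $\overline{x}_i^j$. The robber's only moves that avoid immediate capture are to the partner vertex in the layer, or forward to $Y_i^{j+1}$ (respectively $Y_{i+1}^1$). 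Backward moves and moves into $X\cup C\cup\{c\}$ are covered by the cop's adjacencies. Moving to $u_1^{i,j}$ or $v_1^{i,j}$ is also covered, since $x_i^j$ is adjacent to both. If the robber skips the partner vertex, he forgoes one damaged vertex in that layer. Since the $x$-vertices dominate all earlier layers, he can never come back for it, so he finishes with at most $N-1$. We must also check the alternating-parity transitions. At $Y_i^m\to Y_{i+1}^1$ with $i$ odd, the cop's edges from $x_i^m$ to $\overline{y}_{i+1}^1,\overline{z}_{i+1}^1$ (and symmetrically from $\overline{x}_i^m$) force the robber onto the branch the cop chose, exactly as Falsifier's choice.

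\textbf{The final phase.} Once the robber is in $Y_n^m$, his new options are $c$, $c'$ and the path starts. The cop at $x_n^m$ covers $c$ and the path starts, so only $c'$ remains. From $c'$ with the cop on $c$, the robber can reach $c''$. He can also try $a_1'$ or $b_1'$, but $c$ is adjacent to both. Moves back into $Y$ land on damaged vertices only, since $c$ dominates $Y$ and no undamaged vertex is reachable except through regions the cop dominates. From $c''$, with the cop on some $c_j$, the robber's neighbors are $C'$, $a_1''$ and $b_1''$. Since $c_j$ is adjacent to $a_1''$, $b_1''$ and every $c_{j'}'$ with $j'\ne j$, only $c_j'$ is safe. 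From there the analysis of Lemma~\ref{lem:regular_play} applies.

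\textbf{Main obstacle.} The hard step is making the adjacency bookkeeping exhaustive. For every non-regular robber move we must verify that the cop's regular response really is adjacent to the robber's new vertex, or to every undamaged vertex the robber could reach next. In particular, we must confirm that the $a^j$ and $b^j$ paths, attached to both $c_j$ and $c_j'$, cannot be entered. The concern is the robber arriving at $c_j'$ while the cop is not on $c_j$; we rule this out by the anti-matching structure. For the $Y$-layers we will organise the check as a table of the robber's neighbors against the cop's neighbors. For the $c$-region we will handle each vertex individually, and then combine everything with the damage count to conclude.
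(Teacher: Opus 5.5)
\textbf{Where the plan breaks.} Your outline matches the paper's case analysis: fix a regularly playing cop, walk through the layers using that $x_i^j$ covers $X\cup C\cup C'\cup\{c\}$, the earlier layers and $u_1^{i,j},v_1^{i,j}$, then check $c'$, $c''$, $c_j'$. The problem is the counting principle you use to dispose of deviations. It is false, so the dichotomy ``the deviation either loses or is a regular move'' cannot be established. Two of your claims fail.
\begin{itemize}
\item The $x$-vertices dominate earlier layers only while the cop stays in $X$. At the end she is on $c_j$ and the robber is on $c_j'$, which is adjacent to $z_i^j$ or $\overline z_i^j$ for every literal of $C_j$, whatever the layer. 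So a skipped partner vertex can still be damaged, and ``he can never come back for it'' is wrong.
\item The last phase is not capped at four. The robber steps from $c_j'$ to an undamaged $z$-type vertex and the cop answers on its $y$-twin. The robber returns to $c_j'$, and the cop must go back to $c_j$, since the only common neighbours of $a_1^j$ and $b_1^j$ are $c_j$ and $c_j'$. The robber then takes another undamaged $z$-type neighbour of $c_j'$. So he gains at least one vertex per true literal of $C_j$.
\end{itemize}
Consequently, suppose Satisfier can make every clause contain two true literals. Then a robber who skips one partner vertex still damages $2nm-1+3+2=N$ vertices against the regularly playing cop you assume. That deviation neither loses nor coincides with regular play, so your plan would be proving a false intermediate claim.

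\textbf{What is missing.} You need a comparison argument rather than an absolute one. Each deviation must be shown to yield no more damage than some regular line, with explicit bookkeeping of what can be recovered from $c_j'$ at the end. This is how the paper argues. Skipping $k$ layers through a path start costs $2k-1$ vertices. The only case that is not strictly worse is $k=1$ with the skipped $z$-vertex recovered at the end, and that case is matched by starting in $Y_1^1$. Inside a layer, moving to an undamaged partner is shown to be optimal, not forced.

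Your treatment of the opening has the same defect. Starting on $Y_i^j$ with $(i,j)\neq(1,1)$, on $u_1^{i,j}$ with $(i,j)\neq(1,1)$, or on $c'$, $c''$, or the first vertex of an $a$- or $b$-path does not confine the robber to a bounded region. The cop has to step to $x_i^j$ or into $C\cup\{c\}$, and the robber then plays a shifted version of regular play. These cases again need the skipped-layer and recovery count, not a region-size bound.
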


\begin{proof}

Let us assume that the robber has a winning strategy on $G$.

As argued in Lemma \ref{lem:cop_regular}, the cop starts the game on $x^1_1$ (as prescribed by regular play). Suppose that the robber does not start on a vertex of $Y_1^1$. If they go to any vertex of $X\cup C\cup C'\cup \{c, u_1^0, u_1^{1,1}, v_1^{1,1}\}$, then the cop can capture them immediately, thus winning. If the robber starts on a vertex $u_k^0$ with $k\geq 2$, then the cop can move to $u_1^0$ on his first turn and the robber will only be able to damage at most $N-1$ vertices in this path, thus losing.
Similarly, if the robber starts on any vertex of a $u$-, $v$-, $a$- or $b$-path that is not the starting vertex of the path, then the cop will move to a vertex of $X\cup C \cup \{c\}$ adjacent to the starting vertex of the path, trapping the robber inside the path and keeping them from damaging more than $N-2$ vertices.

If the robber starts on $u_1^{i,j}$ or $v_1^{i,j}$, with $(i,j) \neq (1,1)$, then the cop will move to $x_i^j$. If the robber goes further into the path, then the cop will move to the start of the path, trapping the robber inside; consequently, he will damage at most $N-1$ vertices. If the robber moves to a vertex of $Y_i^j$, then he did not damage any vertex in the previous $Y$-layers and damaged $2k-1$ fewer vertices than they would have if they reached their position following regular play, where $k$ is the number of skipped layers (the $-1$ comes from the fact that they would not have damaged the starting vertex of the path under regular play). Then the cop can continue the game as if the position occurred in regular play. The only situation that in which this is not strictly determinental for the robber is when they only skipped one layer and then managed to damage the skipped $z$/$\overline{z}$-vertex at the very end of the game. In this situation, the robber would damage the same number of vertices by starting in $Y_1^1$. 

The situation is even worse for the robber if their starting vertex is the first vertex of an $a$- or $b$-path. The cop can move to a vertex of $C\cup\{c\}$ guarding the path. If the robber goes further in the path as before, then he only damages $N-1$ vertices. If instead he moves to an adjacent vertex of $C'\cup\{c',c''\}$, then the cop can move so as to create a configuration that could arise under regular play; the robber did not damage any vertices of $Y$ and has thus damaged $2nm-1$ vertices fewer than he would have under regular play. 

Lastly, if the robber starts on a vertex of $Y_i^j$ with $(i,j) \neq (1,1)$ or a vertex of $\{c',c''\}$ then, after the cop moves to an appropriate vertex of $X\cup C\cup \{c\}$ guarding their adjacent paths, the situation is similar to the previous situation where they started on the first vertex of a path and then moved to one of these vertices, except that the has robber damaged one less vertex.

Therefore, we can assume that the robber starts on a vertex of $Y_1^1$.

Suppose that it is the robber's turn and that they are in some layer $Y_i^j$. Since the cop follows regular play, they are on $x_i^j$ or $\overline{x}_i^j$. In particular, if the robber moves to the vertices $u_i^j$ and $v_i^j$, one vertex of $C'$ or a vertex of the previous $Y$-layer, the cop can capture him immediately.

Therefore, the only vertices that the robber can safely reach are the vertices on the next $Y$-layer (or $c'$ if the robber is in the $Y_n^m$ layer) and the $y$-, $z$-, $\overline{y}$- or $\overline{z}$-vertex of $Y_i^j$ adjacent to their current vertex. If this last vertex was not already damaged then it is optimal to move to it (as prescribed by regular play) because the cop will have to stay on the same vertex to guard the vertices $u_i^j$ and $v_i^j$. If it was already damaged, then the robber has to move to the next layer or to $c'$; both of these types of moves follow regular play. 

Suppose that it is the robber's turn and that they are on $c'$. Since the cop is on $c$ according to regular play, the only vertex that the robber can safely move to is $c''$. Similarly, if the robber is on $c''$ then the cop is on a vertex $c_j$ and the only safe move of the robber is to move to $c_j'$ and after this move the only safe moves are to move to some $z$- and $\overline{z}$-vertices. All of these moves follow regular play. As we have seen in Lemma~\ref{lem:regular_play}, if all the neighbors of $c_j$ are already damaged, then the cop can win the game so we can assume that at least one of them is undamaged and the robber can move to it as the last move of regular play and win the game.
\end{proof}

We can finally use all the previous lemmas to prove Theorem~\ref{complexity}.

\begingroup
\renewcommand{\thetheorem}{\ref{complexity}} 
\begin{theorem}
The decision problem \textsc{Damage Number} is {\sf PSPACE}-complete.
\end{theorem}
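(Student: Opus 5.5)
Membership in {\sf PSPACE} has already been argued at the start of this section: there are $O(n^2)$ configurations of the two players. Under optimal play, between any two repetitions of a configuration at least one new vertex is damaged, so the game lasts $O(n^3)$ rounds. A recursive backtracking evaluation of the game tree therefore uses polynomial space. The proof thus reduces to {\sf PSPACE}-hardness, and I would assemble it from the reduction of Subsection~\ref{sub:reduc} together with Lemmas~\ref{lem:regular_play}, \ref{lem:cop_regular} and \ref{lem:robber_regular}.

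First I would check that the reduction is polynomial. Given a \textsc{3-QBF} instance with $n$ variables and $m$ clauses, set $N=2nm+4$. The graph $G$ has $O(nm)$ vertices in $X\cup Y\cup C\cup C'\cup\{c,c',c''\}$, plus $O(nm)$ paths of length $N-1=O(nm)$. So $|V(G)|=O(n^2m^2)$, and the edges are listed explicitly by the rules above, giving polynomial construction time. Note that the problem is phrased as ``is $\dmg(G)\le N$?'' whereas ``the robber wins'' means he damages at least $N$ vertices. Since {\sf PSPACE} is closed under complement, it suffices to show that Satisfier wins the \textsc{3-QBF} game if and only if $\dmg(G)\ge N$, with the threshold shifted by one if the exact form is needed. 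I would state this bookkeeping explicitly.

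The correctness argument splits into two directions. If Satisfier has a winning strategy, suppose for contradiction that the cop has a winning strategy on $(G,N)$. By Lemma~\ref{lem:cop_regular} she then has one that follows regular play. Against it the robber plays regularly, and Lemma~\ref{lem:regular_play} gives him at least $N$ damaged vertices, a contradiction. Since the game is a finite perfect-information game (made finite by the repetition argument), exactly one player can force the outcome, so the robber wins. Conversely, if Falsifier has a winning strategy, suppose the robber has a winning strategy. By Lemma~\ref{lem:robber_regular} he has one following regular play. Against it the cop plays regularly guided by Falsifier's strategy, and Lemma~\ref{lem:regular_play} (including its ``moreover'' part about play after regular play ends) shows the robber damages fewer than $N$ vertices, a contradiction.

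The main subtlety is combining the two ``without loss of generality regular'' lemmas: each assumes the opponent plays regularly. I would handle this as above, by fixing one player's hypothetical winning strategy, regularizing it, and then letting the opponent respond regularly, so that Lemma~\ref{lem:regular_play} applies exactly. I would also make sure determinacy, i.e.\ that one of the two players has a winning strategy, is justified by finiteness of the game.
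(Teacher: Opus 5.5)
Your proposal is correct and follows the same route as the paper. Membership comes from the polynomial bound on the length of optimally played games, and hardness from combining the reduction of Subsection~\ref{sub:reduc} with Lemmas~\ref{lem:regular_play}, \ref{lem:cop_regular} and \ref{lem:robber_regular}. Your additions (the $O(n^2m^2)$ size count, the complementation needed because this section phrases the problem as ``$\dmg(G)\le N$?'', and the contradiction-plus-determinacy argument for combining the two regularization lemmas) only make explicit what the paper's short proof leaves implicit.
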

\addtocounter{theorem}{-1} 
\endgroup

\begin{proof}
As previously stated, \textsc{Damage Number} is in {\sf PSPACE} as there are only a polynomial number of configurations for the cop and the robber and we can assume that between repetitions of the same configuration, at least one more vertex has been damaged. 

For the hardness proof, consider $\varphi(X_1,...,X_n)$ a 3-CNF formula with $m$ clauses. Let $(G,N)$ be the pair obtained from $\varphi$ by following the construction of section~\ref{sub:reduc}. This construction has polynomial size as both $N$ and $G$ are polynomial in $\varphi$.

According to Lemma~\ref{lem:regular_play} when both players follow regular play, if Satisfier (Falsifier resp.) has a winning strategy in the 3-QBF game played on $\varphi$, the robber (the cop resp.) has a winning strategy on $(G,N)$.

Moreover, according to Lemma~\ref{lem:robber_regular} and Lemma~\ref{lem:cop_regular}, the robber has a winning strategy on $(G,N)$ if and only if Satisfier has a winning strategy on $\varphi$. Therefore, the problem \textsc{Damage Number} is also {\sf PSPACE}-hard, thus concluding the proof.
\end{proof}

\section{Concluding remarks} \label{s:conclusion}

To the best of our knowledge, prior to this work, no sequences of $n$-vertex graphs were known with damage number much larger than $n/2$. One can address the natural extremal problem of determining $\dmg(n) := \max\{\dmg(G):|V(G)|=n\}$. Theorem \ref{thm:proj_plane2} on projective planes shows that for an infinite number of values of $n$, we have $\dmg(n)\ge n-O(\sqrt{n})$. On the other hand, it is not hard to see that $\dmg(G_n)\le n-\diam(G_n)/2$: the cop can always protect at least $\diam(G_n)/2$ vertices by starting the game in the center of a diametric path, and guarding it using the strategy of Aigner and Fromme~\cite{AignerFromme} for guarding an isometric path. This and the bound $\dmg(G_n)\le n-\Delta(G)-1$ from \cite{CS} imply $\dmg(n)\le n-\Omega(\frac{\log n}{\log\log n})$. It would be interesting to see results closing the gap between the upper and lower bounds. If the result on the Erd\H os-R\'enyi random graph could be extended to values to $p=o(n^{-1/2})$, then it would improve the lower bound for $\dmg(n)$.

Proving that $\dmg(n) \le n - f(n)$ for some ``small'' function $f$ would lead to bounds on the cop number of a connected graph.  For example, if we knew that the damage number of an $n$-vertex graph was bounded above by $n-n^{\eps}$ for some positive constant $\eps$ smaller than 1, then it would follow that the cop number of a connected $n$-vertex graph is bounded above by $k \cdot n^{1-\eps}$ for some constant $k$, thereby resolving the ``soft'' Meyniel conjecture (which asserts that every $n$-vertex connected graph has cop number $O(n^{1-\eps})$ for some $\eps > 0$) ~\cite{BB}.  To see this, note that the inequality $\dmg(n) \le n-n^{\eps}$ would imply that a single cop, when playing the game on a graph $G$, can eventually restrict the robber to a connected set $S$ of at most $n-n^{\eps}$ vertices.  Let $G'$ denote the subgraph of $G$ induced by $S$; we have $\size{V(G')} \le n-n^{\eps}$, so by induction, $k(n-n^{\eps})^{1-\eps}$ additional cops would suffice to capture a robber on $G'$ which, together with the original cop, would lead to a capture on $G$.  Consequently, letting $c(G)$ denote the cop number of $G$, we have
\begin{align*}
c(G) \le c(G') + 1 &\le k(n-n^{\eps})^{1-\eps} + 1\\ 
     &= kn^{1-\eps}(1-n^{\eps-1})^{1-\eps} + 1\\ 
     &\le kn^{1-\eps}(1-(1-\eps)n^{\eps-1}) + 1\\ 
     &= k(n^{1-\eps} - (1-\eps)) + 1\\ 
     &= kn^{1-\eps} - (1-\eps)k + 1,
\end{align*}
which is at most $kn^{1-\eps}$ for $k$ sufficiently large relative to $\eps$.  Since there exist graphs with cop number $O(\sqrt{n})$, we cannot hope to have $f(n) \gg n^{1/2}$; that is, there must exist graphs with damage number at least $n-k\cdot n^{1/2}$ for some $k$, and indeed, Theorem \ref{thm:proj_plane2} gives an explicit sequence of such graphs.

\medskip

    We studied graphs related to set families. Another natural candidate would be the \textit{Kneser graph} $\KG(n,k)$ that has vertex set $\binom{[n]}{k}$ and two sets $F,G$ are joined by an edge if and only if $F\cap G=\emptyset$. This is empty if $n<2k$ and is a matching if $n=2k$, so the first meaning ful value is $n=2k+1$. An argument based on Lemma~\ref{lem:generallower}, would yield $\dmg(\KG(2k+1,k))\ge \binom{2k-1}{k-1}$. Also, for the upper bound the cop can use an auxiliary game on the middle graph $M_k$ as follows: if the robber / cop is at vertex representing set $F$ in the game in $\KG(2k+1,k)$ after an even number of rounds, then the robber / cop is at vertex $F$ in the game on $M_k$. On the other hand, if  the robber / cop is at vertex representing set $F$ in the game in $\KG(2k+1,k)$ after an odd number of rounds, then the robber / cop is at vertex $[n]\setminus F$ in the game on $M_k$. A strategy of the cop in the $M_k$-game yields a strategy in the $\KG(2k+1,k)$-game. Also, it is not hard to see that an optimal strategy of the cop in the $M_k$-game converted to the $\KG(2k+1,k)$-game shows $\dmg(\KG(2k+1,k))\le 2\binom{2k-1}{k-1}$. We could not close the gap between the lower and upper bounds. Also, as $n$ grows the graph $\KG(n,k)$ behaves very differently, for example its diameter becomes 2 as soon as $n\ge 3k-1$. Determining $\dmg(\KG(n,k))$ is a natural open problem.

\section*{Acknowledgments} 

This work was initiated at the \emph{4th Workshop on Games on Graphs} on Rogla, Slovenia in June 2026. We thank the organizers and all the participants for the inspiring atmosphere.


\begin{thebibliography}{99}
    \bibitem{AignerFromme}
    M.~Aigner, M.~Fromme, A game of cops and robbers, Discrete~Appl.~Math.~8 (1984), no. 1, 1--11.
    \bibitem{AS}
    N.~Alon, J.~Spencer, \emph{The probabilistic method}, 4th ed., John Wiley \& Sons, 2016.
    \bibitem{BB}
    W.~Baird, A.~Bonato, Meyniel’s conjecture on the cop number: A survey, Journal of Combinatorics, 3(2) (2012), 225--238.
    \bibitem{bonato2011game} 
    A.~Bonato, R.~Nowakowski, \emph{The Game of Cops and Robbers on Graphs}, American Mathematical Society, Providence, Rhode Island, 2011.
    \bibitem{bonato-pralat-book}
    A.~Bonato, P.~Pra{\l}at, \emph{Graph searching games and probabilistic methods}, Chapman and Hall/CRC, 2017.
    \bibitem{carlson2021damage}
    J.~Carlson, R.~Eagleton, J.~Geneson, J.~Petrucci, C.~Reinhart, P.~Sen, The damage throttling number of a graph, Australas.~J.~Comb.~80 (2021), 361--385.
    \bibitem{carlson2022multi}
    J.~Carlson, M.~Halloran, C.~Reinhart, The multi-robber damage number of a graph, \emph{arXiv preprint arXiv:2205.06956}, 2022. \url{https://arxiv.org/abs/2205.06956}
    \bibitem{CS}
    D.~Cox, A.~Sanaei, The damage number of a graph, Australas.~J.~Comb.~75 (2019), 1--16.
    \bibitem{gagarova2025exploring}
    S.~Gagarova, Exploring the Multi-Robber Damage Number of a Graph, SIAM Undergraduate Research Online (SIURO) 19 (2026), 97--117.
    \bibitem{GareyJohnson}
    M.R.~Garey, D.S.~Johnson, Computers and Intractability: A Guide to the Theory of NP-Completeness. 
    W.H.~Freeman and Co., San Francisco, CA, 1979. 
    \bibitem{HMP}
    M.A.~Huggan, M.E.~Messinger, A.~Porter, The damage number of the Cartesian product of graphs, Australas.~J.~Comb.~88 (2024), 362--384.
    \bibitem{cor-huggan2024damage}
    M.A.~Huggan, M.E.~Messinger, A.~Porter, Corrigendum to: The damage number of the Cartesian product of graphs, Australas.~J.~Comb.~91 (2025), 217--218.
    \bibitem{KissSzonyi}
    Gy.~Kiss, T.~Sz\H onyi, \textit{Finite geometries}, Chapman and Hall/CRC, 2019.
    \bibitem{nowakowski1983vertex}
    R.~Nowakowski,  P.~Winkler, Vertex-to-vertex pursuit in a graph, Discrete Mathematics~43, 235--239, 1983.
    \bibitem{stojakovic2022multirobber}
    M.~Stojakovi\'c, L.~Wulf, On the multi-robber damage number, \emph{arXiv preprint arXiv:2209.10965}, 2022. \url{https://arxiv.org/abs/2209.10965}.
    \bibitem{quilliot1978jeux}
    A.~Quilliot, \emph{Jeux et pointes fixes sur les graphes}, Ph.D. dissertation, Université de Paris VI, 1978.
\end{thebibliography}
\end{document}